\newtheorem{theorem}{Theorem}
\newtheorem{lemma}[theorem]{Lemma}
\newtheorem{corollary}[theorem]{Corollary}
\theoremstyle{remark}
\newtheorem{remark}{Remark}
\newtheorem*{problem*}{Problem}
\NewDocumentCommand{\floor}{s O{} m}{%
  \IfBooleanTF{#1} 
    {\left\lfloor#3\right\rfloor} 
    {#2\lfloor#3#2\rfloor} 
}
\NewDocumentCommand{\ceil}{s O{} m}{%
  \IfBooleanTF{#1} 
    {\left\lceil#3\right\rceil} 
    {#2\lceil#3#2\rceil} 
}
\newcommand{\rad}{\text{rad}}
\newcommand{\bq}{\mathcal{H} ( \tfrac{1}{q} )}
\newcommand{\bprime}{\mathcal{H} ' ( \tfrac{1}{q} )}
\newcommand{\bdoubleprime}{\mathcal{H} '' ( \tfrac{1}{q} )}
\newcommand{\cq}{ \mathcal{C} ( 1, \tfrac{1}{q} ) }
\newcommand{\cprimew}{ \frac{ \frac{d}{dw} \mathcal{C} (1,w) \rvert_{w=\frac{1}{q}}}{q}}
\newcommand{\cprimex}{ \frac{d}{dx} \mathcal{C} (x,\tfrac{1}{q} ) \rvert_{x=1}}
\newcommand{\sumstar}{\sideset{}{^*}\sum}
\numberwithin{theorem}{section} \numberwithin{equation}{section}
\begin{document}

\title{The fourth moment of quadratic Dirichlet $L$--functions over function fields}
\date{}
\author{Alexandra Florea}
\address{Department of Mathematics, Stanford University, Stanford, CA 94305}
\email{amusat@stanford.edu}
\maketitle

\begin{abstract}
We obtain an asymptotic formula for the fourth moment of quadratic Dirichlet $L$--functions over $\mathbb{F}_q[x]$, as the base field $\mathbb{F}_q$ is fixed and the genus of the family goes to infinity. According to conjectures of Andrade and Keating, we expect the fourth moment to be asymptotic to $q^{2g+1} P(2g+1)$ up to an error of size $o(q^{2g+1})$, where $P$ is a polynomial of degree $10$ with explicit coefficients. We prove an asymptotic formula with the leading three terms, which agrees with the conjectured result.
\end{abstract}

\section{Introduction}
In this paper, we study the symplectic family of $L(s,\chi_D)$, as $D$ ranges over square-free polynomials of degree $2g+1$, with coefficients in a fixed field $\mathbb{F}_q[x]$. We obtain an asymptotic formula for the fourth moment of this family of $L$--functions at the critical point, with some of the secondary main terms, as $g \to \infty$ and $q$ is fixed. Specifically, we prove the following.
\begin{theorem}
Let $q$ be a prime with $q \equiv 1 \pmod 4$. Then
\begin{equation}
\sum_{D \in \mathcal{H}_{2g+1}} L ( \tfrac{1}{2},\chi_D)^4 = q^{2g+1} (a_{10}g^{10}+a_9 g^9+a_8 g^8) + O(q^{2g+1}g^{7+\frac{1}{2}+\epsilon}),
\end{equation}
where the sum above is over monic, square-free polynomials of degree $2g+1$ in $\mathbb{F}_q[x]$, and the coefficients $a_{10},a_9,a_8$ are arithmetic factors which can be written down explicitly (see the Appendix, formulas \eqref{coeff1},\eqref{coeff2},\eqref{coeff3}).
\label{fourth}
\end{theorem}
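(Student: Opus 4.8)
The plan is to reduce the fourth moment to an average of twisted second moments via the functional equation, evaluate the resulting character sums by Poisson summation over $\fqx$, and extract the three leading terms by a contour integral. Since $L(s,\chi_D)^2$ is a polynomial of degree $4g$ in $q^{-s}$ (for $D$ monic squarefree of degree $2g+1$ the sign in the functional equation of $L$ is $+1$), the functional equation yields the identity
$$L(\tfrac12,\chi_D)^2 \;=\; 2\doublesum\frac{d(f)\,\chi_D(f)}{|f|^{1/2}}\;+\;\frac{1}{q^{g}}\sum_{f\in\mathcal M_{2g}}d(f)\,\chi_D(f),$$
with $d$ the divisor function. Applying this to one of the two factors $L^2$ in $L^4$ reduces $\sum_{D\in\mathcal H_{2g+1}}L(\tfrac12,\chi_D)^4$ to a weighted average of twisted second moments $\sum_{D\in\mathcal H_{2g+1}}L(\tfrac12,\chi_D)^2\,\chi_D(f)$ over $f\in\mathcal M_{\le 2g}$; expanding the second factor the same way (which is where $d_4=d*d$ enters, since $\sum_{fh=\ell^2}d(f)d(h)$ is essentially $d_4(\ell^2)$) and averaging over $D$ brings everything down to the character sums $\sum_{D\in\mathcal H_{2g+1}}\chi_D(fh)$. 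I would evaluate these by detecting square-freeness of $D$ via Möbius inversion over $E^2\mid D$ — using $q\equiv 1\bmod 4$, so that quadratic reciprocity identifies $\chi_D(F)$ with the Jacobi symbol $\chi_F(D)$ with no stray sign — and then Poisson summation modulo $fh$. This produces a diagonal main term supported on perfect squares $fh=\square$ (where the character collapses to $1$, up to an Euler product imposing coprimality with $D$) plus a dual sum of generalized Gauss sums $G(V,fh)$ over polynomials $V$ of bounded degree; the dual sum must be separated according to whether $V$ is itself a square — the pieces I would denote $\onesquare$ and $\twosquare$ — since those square-$V$ parts still contribute to the main term.

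On the diagonal, writing $fh=\ell^2$ and summing, the main term becomes $q^{2g+1}$ times a multiple sum whose generating Dirichlet series, after extracting a factor $\zeta_{\fqx}(w)^{10}$, has its remaining Euler products regular at $w=1/q$ — these are the constants $\cq$ and $\bq$ — while the order-$10$ pole of $\zeta_{\fqx}(w)^{10}$ at $w=1/q$ is precisely what makes the answer a polynomial of degree $10$. Evaluating the partial sums by a contour integral of the form $\frac{1}{2\pi i}\oint(\cdots)\frac{du}{u^{g+1}(1-u)}$ and computing the residue at the resulting high-order pole gives a degree-$10$ polynomial in $g$ whose leading three coefficients, assembled from $\cq,\cprimex,\cprimew,\bq,\bprime,\bdoubleprime$, are $a_{10},a_9,a_8$; the square-$V$ dual pieces $\onesquare,\twosquare$ are run through the same machinery and added in.

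The main obstacle is the off-diagonal estimate: the non-square-$V$ Gauss-sum contribution, together with the boundary terms from the two applications of the functional equation, must be bounded by $O(q^{2g+1}g^{7+\frac12+\epsilon})$. The trivial bound is of size $q^{3g}$, hopelessly large, so one has to exploit the oscillation of $G(V,fh)$ — which forces $fh$ to be close to a perfect square and thereby collapses the effective range of summation — and then control what remains, for which the sharp evaluation of the twisted second moment does the heavy lifting (reducing the residual sums to ones that can be estimated directly, e.g. via bounds for short character sums). I would organise all of this by splitting the $\deg(fh)\le 4g$ range at $4g-\alpha$ (the sums $\ssmall$, $\sbig$) and choosing the parameter $\alpha$ to balance the Poisson error accumulated on the short range against the length of the long range. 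Driving the resulting bound down to exactly $g^{7+\frac12+\epsilon}$ — sharp enough that all three main terms survive intact — while keeping every boundary term consistently accounted for through the argument, is the delicate heart of the proof.
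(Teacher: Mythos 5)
Your setup is essentially the paper's: expanding $L^4$ via the functional equation to reach a $d_4$-weighted sum over monic $f$ of degree up to $4g$, detecting squarefreeness of $D$, applying Poisson summation in $\fqx$, separating the dual sum into $V=0$ (the $fh=\square$ diagonal), $V$ a square, and $V$ a non-square, and pulling out the degree-$10$ polynomial from the order-$10$ pole of $\mathcal{Z}(w)^{10}$ at $w=1/q$. Up to that point your plan matches the paper's machinery.

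The gap is in the step you yourself flag as the heart of the proof: bounding the non-square-$V$ piece sharply enough to recover three terms. Your proposed devices there do not work. The Gauss-sum oscillation does not ``force $fh$ to be close to a perfect square''; by the explicit evaluation of $G(V,\chi_{P^i})$ in Lemma \ref{computeg} the non-square-$V$ sum simply becomes, after summing over $f$, a shifted fourth power of $L$-functions attached to the squarefree part $V_0$ of $V$. Concretely, the generating series factors as
$$\mathcal{F}(V_0;w,x,u) = \mathcal{Z}(x)\,\mathcal{Z}(qw^2x)^{10}\,\mathcal{L}(w,\chi_{V_0})^4\,\mathcal{L}(wu,\chi_{V_0})^4\cdots,$$
and the problem becomes bounding $\sum_{V_0\in\mathcal{H}_r}|\mathcal{L}(w,\chi_{V_0})\cdots\mathcal{L}(wu^{l-1},\chi_{V_0})|^4$ uniformly for $|w|=q^{-1/2}$ and integrating $dw$ around the circle. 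This is exactly where the paper brings in the two ingredients missing from your plan: (i) Soundararajan-type unconditional upper bounds for (products of shifted) moments of these $L$-functions, Theorem \ref{upperbound} and the more delicate Lemma \ref{shiftedupperb}, whose proofs require the Chandee--Soundararajan/Carneiro--Vaaler optimal-minorant machinery adapted to $\fqx$; and (ii) the observation that the family transitions from symplectic behaviour (gaining $g^{10}$) on an arc of length $\asymp 1/g$ around $w=q^{-1/2}$ to unitary behaviour (gaining only $g^4$) away from it, so that splitting the $w$-circle at angle $\theta_0$ and integrating $\theta^{-6}$ on the far arc gives $g^{9+\epsilon}$, not $q^{3g}$. ``Bounds for short character sums'' and ``sharp evaluation of the twisted second moment'' are not used and would not give this.

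There is a second, structurally essential, piece absent: even with (i) and (ii) you only reach error $O(q^{2g+1}g^{9+\epsilon})$, i.e.\ only the leading coefficient $a_{10}$. To get $a_9$, $a_8$ and the stated error $g^{7+1/2+\epsilon}$ the paper runs a bootstrap. It splits the $f$-sum at $4g-\alpha$ with $\alpha\asymp\log g$; the head is handled as above, while the tail $T_{1,\alpha}$ is written via Perron's formula as $\frac{1}{2\pi i}\oint_{|z|=1}\frac{1-z^\alpha}{(1-z)z^{4g+1}}\sum_D\mathcal{L}(z/\sqrt q,\chi_D)^4\,dz$ and the already-proved $g^{9+\epsilon}$ asymptotic is substituted for the inner shifted moment on the small arc $|z-1|\lesssim\theta_0$. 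The cancellation between the $g^9\alpha$, $g^8\alpha^2$, $g^8\alpha$ and $g^8\theta_0^{-1}$ terms produced by the head and the tail is what leaves the clean polynomial $a_{10}g^{10}+a_9g^9+a_8g^8$, and iterating once more gives $g^{7+1/2+\epsilon}$. Your parameter $\alpha$ is described only as ``balancing the Poisson error against the length of the long range,'' which misses that the point of the split is this algebraic cancellation, fed by the previously established weaker asymptotic. Without the upper-bound lemmas, the arc split, and this recursive cancellation, your plan cannot produce more than one main term.
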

Computing moments in families of $L$--functions is a problem which goes back to Hardy and Littlewood \cite{hl}. If we let
$$ M_k(T) = \int_0^T \left| \zeta ( \tfrac{1}{2} +it )\right|^{2k}  dt,$$
then Hardy and Littlewood showed that $M_1(T) \sim T \log T$, and Ingham \cite{ingham} computed the second moment to be $M_2(T) \sim \frac{1}{2 \pi^2} T (\log T)^4$. In general, it is conjectured that 
$$M_k(T) \sim C_k T (\log T)^{k^2},$$ for some constant $C_k$, whose precise value was predicted by Keating and Snaith \cite{ksnaith}, using analogies with random matrix theory. While no moment higher than $2$ has been computed so far, Soundararajan obtained almost sharp upper bounds, conditional on GRH. More precisely, he showed that $M_k(T) \ll T (\log T)^{k^2+\epsilon},$ for any $\epsilon>0$. Building on this work, Harper \cite{harper} obtained upper bounds of the correct order of magnitude for moments of the Riemann-zeta function, by removing the $\epsilon$ on the power of $\log T$.

One can look at other families of $L$--functions also. In this paper, we will focus on the family of quadratic Dirichlet $L$--functions. As for the Riemann-zeta function, one can only compute a few small moments. Jutila \cite{jutila} obtained asymptotics for the first and second moment of this family. He showed that
 $$\sumstar_{0 < d \leq D} L \Big(\tfrac{1}{2}, \chi_d \Big) \sim C_1 D \log D,$$  where the sum above is over fundamental discriminants, and that
 $$ \sumstar_{0 < d \leq D} L \Big(\tfrac{1}{2}, \chi_d \Big)^2 \sim C_2 D (\log D)^{3},$$  where the constants $C_1$ and $C_2$ can be written down explicitly. 
Soundararajan \cite{sound} computed a secondary main term for the second moment, and also obtained an asymptotic for the third moment. Generally, it is conjectured that
 $$\sumstar_{0 < d \leq D} L \Big(\tfrac{1}{2}, \chi_d \Big)^k \sim C_k D (\log D)^{\frac{k(k+1)}{2}},$$  and the precise value of $C_k$ follows from work of Keating and Snaith \cite{ksnaith}, again using random matrix theory. Conrey, Farmer, Keating, Rubinstein and Snaith \cite{cfkrs} refined this conjecture, and their recipe predicts that the $k^{\text{th}}$ moment above should be asymptotic to $D P_k(\log D)$, where $P_k$ is a polynomial of degree $\frac{k(k+1)}{2}.$ For $k \leq 3$, the computed moments match the answers predicted by the recipe. An alternative approach to computing moments has been proposed by Diaconu, Goldfeld and Hoffstein \cite{dgh}, using multiple Dirichlet series. Their method allows them to compute the cubic moment of $L(1/2,\chi_d)$ and further predicts the existence of infinitely many lower order terms for the fourth moment of this family of $L$-functions, of size $X^{\frac{j+1}{2j}+\epsilon}$, for $j \geq 2$.

 The fourth moment of this family has not been explicitly computed; however, this problem is similar in difficulty with computing the second moment of the orthogonal family of quadratic twists of modular forms. Under GRH, Soundararajan and Young \cite{soundyoung} obtained an asymptotic formula with the leading main term. We are led to believe that using the same circle of ideas, under GRH, one could maybe obtain the leading term for the fourth moment of the family we are interested in.  However, this seems to be right at the edge of what can be achieved for this family of $L$--functions, and it has not been done so far.
 
 Here, we consider moments of the symplectic family of quadratic Dirichlet $L$--functions in the function field setting. We are interested in computing 
 
 \begin{equation}
  \sum_{D \in \mathcal{H}_{2g+1}} L ( \tfrac{1}{2},\chi_D)^k, \label{mom4}
  \end{equation} where $\mathcal{H}_{2g+1}$ denotes the hyperelliptic ensemble of monic, square-free polynomials of degree $2g+1$ with coefficients in $\mathbb{F}_q[x]$, as $|D|:=q^{\deg(D)} = q^{2g+1} \to \infty$. Then we can consider two limits: the limit $q \to \infty$ (and $g$ fixed), or $g \to \infty$ (and $q$ fixed). In the former case, the problem is solved by using the equidistribution results of Katz and Sarnak \cite{katzsarnak}, \cite{katzsarnak2}. As $q \to \infty$, they showed that the Frobenii classes become equidistributed in the group $\text{USp}(2g)$, so computing the moment reduces to computing a matrix integral over $\text{USp}(2g)$, which was done by Keating and Snaith \cite{ksnaith}. Note that in the $q \to \infty$ regime, Bucur and Diaconu \cite{bucurdiaconu} obtained an asymptotic formula for $\displaystyle \sum_{\substack{D \text{ monic} \\ \deg(D) =2g}} L (1/2,\chi_D)^4$, using multiple Dirichlet series.
 
 Hence we concentrate on the limit $g \to \infty$ (with $q$ fixed), which is more similar to the original number field problem. The first moment was computed by Andrade and Keating \cite{keatingandrade}, and their answer is similar to the number field asymptotic. Specifically, they proved that
 $$ \sum_{D \in \mathcal{H}_{2g+1}} L ( \tfrac{1}{2},\chi_D) \sim |D|  P_1(\log_q |D|),$$ where $P_1$ is an explicit linear polynomial whose coefficients are arithmetic terms. A secondary main term of size $|D|^{\frac{1}{3}} \log_q |D|$ was identified in \cite{aflorea}. The second and third moments of this family of $L$--functions were computed in \cite{aflorea2}.
 
 Following the recipe in \cite{cfkrs}, Andrade and Keating \cite{conjectures} conjectured asymptotic formulas for the integral moments of the family of quadratic Dirichlet $L$--functions in function fields. Specifically, they conjectured that
 $$ \sum_{D \in \mathcal{H}_{2g+1}} L ( \tfrac{1}{2},\chi_D)^k = q^{2g+1}( P_k(2g+1) + o(1)),$$ where $P_k$ is a polynomial of degree $\frac{k(k+1)}{2}$, with explicit coefficients. Rubinstein and Wu \cite{rubinstein} provided numerical computations which support Andrade and Keating's conjecture. 
 
 For the fourth moment, the conjecture states that
\begin{equation}
 \sum_{D \in \mathcal{H}_{2g+1}} L \big( \tfrac{1}{2}, \chi_D \big)^4 =  \sum_{D \in \mathcal{H}_{2g+1}} Q  (2g+1)(1+ o(1)), \label{conjmoment}
\end{equation}
where $Q$ is a polynomial of degree $10$ given by 
\begin{equation}
 Q(x) = \frac{ 2^k}{k!} \frac{1}{(2 \pi i)^4} \oint \ldots \oint \frac{G(z_1, \ldots, z_4) \Delta(z_1^2, \ldots, z_4^2)^2}{ \prod_{j=1}^4 z_j^{7}} q^{\frac{x}{2} \sum_{j=1}^4 z_j } \, dz_1 \ldots dz_4,
 \label{qx}
 \end{equation} and
$$G(z_1, \ldots,z_4) = A \left( z_1,\ldots,z_4 \right) \prod_{j=1}^4 X \left( \frac{1}{2} + z_j \right)^{-1/2} \prod_{1 \leq i \leq j \leq 4} \zeta_q(1+z_i+z_j).$$
In the above,
$$X(s)= q^{-1/2+s},$$ and
\begin{align}
 A \left(  z_1, \ldots, z_4 \right) &= \prod_{P} \prod_{1 \leq i \leq j \leq 4} \left( 1- \frac{1}{|P|^{1+z_i+z_j}} \right) \nonumber \\
 & \times \left( \frac{1}{2} \left( \prod_{j=1}^4 \left(1- \frac{1}{|P|^{1/2+z_j}} \right)^{-1} +  \prod_{j=1}^4 \left(1+ \frac{1}{|P|^{1/2+z_j}} \right)^{-1} \right) +\frac{1}{|P|} \right) \left( 1+ \frac{1}{|P|} \right)^{-1}. \label{az}
\end{align}
We obtain an asymptotic formula with the leading three terms, and check that the answer matches the above conjecture.


\section{Background and tools}
\subsection{$L$--functions over function fields}
Here we gather some basic facts about $L$--functions in function fields. Many of the proofs can be found in \cite{rosen}.

Throughout the paper, for simplicity, we will take $q$ to be a prime with $q \equiv 1 \pmod 4$. For $f$ a polynomial in $\mathbb{F}_q[x]$, its degree will be denoted by $d(f)$. The set of monic polynomials of degree $n$ is denoted by $\mathcal{M}_n,$ the set of monic polynomials of degree less than or equal to $n$ by $\mathcal{M}_{\leq n}$, and $\mathcal{H}_n$ denotes the set of monic, square-free polynomials of degree $n$. The symbol $P$ will stand for a monic, irreducible polynomial of degree $n$. 
Note that
$|\mathcal{M}_n|=q^n$, and for $n \geq 1$, $| \mathcal{H}_n| = q^{n-1}(q-1)$. Let $\pi_q(n)$ denote the number of monic, irreducible polynomials of degree $n$. Then the Prime Polynomial Theorem states that
\begin{equation}
\pi_q(n) = \frac{q^n}{n}+O \Big(  \frac{q^{n/2}}{n} \Big). \label{ppt}
\end{equation}
For a polynomial $f$ in $\mathbb{F}_q[x]$, let $|f|:=q^{d(f)}$ denote the norm of $f$. For $\Re(s)>1$, the zeta-function of $\mathbb{F}_q[x]$ is defined by
$$ \zeta_q(s) = \sum_{f \text{ monic}} \frac{1}{|f|^s}= \prod_P (1-|P|^{-s})^{-1}.$$
Since there are $q^n$ monic polynomials of degree $n$, one can show that $$\zeta_q(s) = \frac{1}{1-q^{1-s}},$$ and this provides an analytic continuation of the zeta-function to the complex plane, with a simple pole at $s=1$. We will often use the change of variables $u=q^{-s}$. Then
$$\mathcal{Z}(u) = \zeta_q(s) = \sum_{f \text{ monic}} u^{d(f)} = \prod_P (1-u^{d(P)})^{-1} =  \frac{1}{1-qu} .$$
The M\"{o}bius function $\mu$ is defined as usual by $\mu(f)=(-1)^{\omega(f)}$ if $f$ is a square-free polynomial and where $\omega(f) =\sum_{P|f} 1$, and $0$ otherwise. We also define the von-Mangoldt function as
$$ \Lambda(f) = 
\begin{cases}
d(P) & \mbox{ if } f=c P^k, c \in \mathbb{F}_q^{\times} \\
0 & \mbox{ otherwise.}
\end{cases}
$$
Now for $P$ a monic irreducible polynomial, define the quadratic character $\Big( \frac
{f}{P}\Big)$ by
$$ \Big( \frac{f}{P}\Big)= 
\begin{cases}
1 & \mbox{ if }  f \text{ is a square} \pmod P, P \nmid f \\
-1& \mbox{ if }  f \text{ is not a square} \pmod P, P \nmid f \\
0 & \mbox{ if } P|f.
\end{cases}
$$
For $D$ a polynomial in $\mathbb{F}_q[x]$, the symbol $ \Big(  \frac{ \cdot}{D} \Big)$ is defined by extending the residue above multiplicatively. The quadratic reciprocity law states that for $A,B$ non-zero, relatively prime monic polynomials
$$ \Big(  \frac{A}{B} \Big)= \Big(  \frac{B}{A } \Big) (-1)^{\frac{ q-1}{2} d(A) d(B)}.$$
Since $q \equiv 1 \pmod 4$, one has $ \Big(  \frac{A}{B} \Big)= \Big(  \frac{B}{A } \Big).$ The quadratic character $\chi_D$ is defined by $$\chi_D(f) = \Big(  \frac{D}{f} \Big).$$ 
Now the $L$--function associated to the quadratic character $\chi_D$ is defined by
$$L(s,\chi_D) = \sum_{f \text{ monic}} \frac{\chi_D(f)}{|f|^s} = \prod_P (1-\chi_D(P) |P|^{-s})^{-1}.$$
Similarly as before, with the change of variables $u=q^{-s}$, one has
$$ \mathcal{L}(u,\chi_D) = \sum_{f \text{ monic}} \chi_D(f) u^{d(f)} = \prod_P (1-\chi_D(P) u^{d(P)})^{-1}.$$
When $D$ is a non-square polynomial, since when $d(f) \geq d(D)$
$$\sum_{f \in \mathcal{M}_m} \chi_D(f)=0,$$ it follows that $\mathcal{L}(u,\chi_D)$ is a polynomial of degree at most $d(D)-1$. 
From now on, $D$ will be a monic, square-free polynomial of odd degree $2g+1$. We have
\begin{equation}
\mathcal{L}(u,\chi_D) = \prod_{i=1}^{2g} (1-u \sqrt{q} \alpha_j) .
\label{produs}
\end{equation}
The Riemann hypothesis, proven by Weil \cite{weil}, states that $|\alpha_j|=1$, hence we can write $\alpha_j = e^{2 \pi i \theta_j}$, with $\theta_j \in \mathbb{R} / \mathbb{Z}.$ Moreover, the $L$--function satisfies the functional equation
$$ \mathcal{L}(u,\chi_D) = (qu^2)^g \mathcal{L} \Big(  \frac{1}{qu},\chi_D \Big).$$
One can define the completed $L$--function in the following way. Let $X_D(s) = |D|^{\frac{1}{2}-s} X(s),$ where $X(s) = q^{s-\frac{1}{2}}.$ Let 
\begin{equation}
\Lambda(s,\chi_D) = L(s,\chi_D) X_D(s)^{-\frac{1}{2}}.
\label{completed}
\end{equation}
Then the completed $L$--function above satisfies the symmetric functional equation
\begin{equation}
\Lambda(s,\chi_D) =\Lambda(1-s,\chi_D). \label{fecompleted}
\end{equation} 

\subsection{Preliminary lemmas}
\begin{lemma} \label{afe}
We have the following ``approximate functional equation'':
$$ \sum_{D \in \mathcal{H}_{2g+1}} L ( \tfrac{1}{2}, \chi_D )^4= \sum_{f \in \mathcal{M}_{\leq 4g}} \frac{d_4(f) \chi_D(f)}{\sqrt{|f|}} +  \sum_{f \in \mathcal{M}_{\leq 4g-1}} \frac{d_4(f) \chi_D(f)}{\sqrt{|f|}},$$ where $d_4$ denotes the $4^{\text{th}}$ divisor function (i.e.: $d_4(f) = \sum_{f_1f_2f_3f_4= f}1$).
\end{lemma}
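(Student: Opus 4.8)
The plan is to prove the identity for a single $D$ and then sum over $D\in\mathcal{H}_{2g+1}$; the per-$D$ statement is the standard manipulation of expanding the fourth power of the Dirichlet polynomial $\mathcal{L}(u,\chi_D)$ and folding it about the centre of the critical strip via the functional equation. Passing to the variable $u=q^{-s}$ (so that $L(\tfrac12,\chi_D)^4=\mathcal{L}(q^{-1/2},\chi_D)^4$), I would first note that since $D$ has odd degree $2g+1$ it is a non-square, so by \eqref{produs} the function $\mathcal{L}(u,\chi_D)$ is a polynomial in $u$ of degree exactly $2g$, whence $\mathcal{L}(u,\chi_D)^4$ is a polynomial of degree $8g$. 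Because $\chi_D$ is completely multiplicative, multiplying out the Dirichlet series and using $\sum_{f_1f_2f_3f_4=f}1=d_4(f)$ gives
\[
\mathcal{L}(u,\chi_D)^4=\sum_{n=0}^{8g}c_n(D)\,u^n,\qquad c_n(D):=\sum_{f\in\mathcal{M}_n}d_4(f)\chi_D(f).
\]

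Next I would feed this expansion into the functional equation. Raising $\mathcal{L}(u,\chi_D)=(qu^2)^g\,\mathcal{L}(1/(qu),\chi_D)$ to the fourth power, substituting the expansion on both sides and comparing coefficients of $u^n$, one obtains the reflection relation $c_{8g-n}(D)=q^{4g-n}c_n(D)$ for $0\le n\le 8g$. Evaluating at $u=q^{-1/2}$ and splitting at the centre, $\sum_{n=0}^{8g}c_n(D)q^{-n/2}=\sum_{n=0}^{4g}c_n(D)q^{-n/2}+\sum_{n=4g+1}^{8g}c_n(D)q^{-n/2}$; in the second sum put $n=8g-m$ with $0\le m\le 4g-1$ and apply the reflection relation, noting that the powers of $q$ cancel exactly since $q^{4g-m}q^{-(8g-m)/2}=q^{-m/2}$, so that the tail becomes $\sum_{m=0}^{4g-1}c_m(D)q^{-m/2}$. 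Since $c_n(D)q^{-n/2}=\sum_{f\in\mathcal{M}_n}d_4(f)\chi_D(f)/\sqrt{|f|}$, grouping by degree yields
\[
L(\tfrac12,\chi_D)^4=\sum_{f\in\mathcal{M}_{\le 4g}}\frac{d_4(f)\chi_D(f)}{\sqrt{|f|}}+\sum_{f\in\mathcal{M}_{\le 4g-1}}\frac{d_4(f)\chi_D(f)}{\sqrt{|f|}},
\]
and summing over $D\in\mathcal{H}_{2g+1}$ gives the lemma.

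I do not anticipate any real difficulty; unlike the number-field case the identity is exact (no error term), because $\mathcal{L}(u,\chi_D)^4$ is genuinely a polynomial and the fold is an exact algebraic identity. The one point that needs care is the bookkeeping at the centre, which is responsible for the slight asymmetry between the two sums (one over $\mathcal{M}_{\le 4g}$, the other over $\mathcal{M}_{\le 4g-1}$): since $\deg D=2g+1$ is odd, $\mathcal{L}(u,\chi_D)$ has even degree $2g$, $\mathcal{L}(u,\chi_D)^4$ has degree $8g$, and the reflection $c_n\leftrightarrow c_{8g-n}$ is symmetric about the integer $n=4g$, so the centre coefficient $c_{4g}(D)$ is allotted to exactly one of the two ranges. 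One could equally split at $n=4g-1$ and fold the range $n\ge 4g$, arriving at the same identity.
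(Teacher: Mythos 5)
Your proof is correct and is the standard derivation of the exact functional-equation identity in this setting: expand $\mathcal{L}(u,\chi_D)^4$ as a polynomial of degree $8g$, extract the coefficient reflection $c_{8g-n}=q^{4g-n}c_n$ from the fourth power of the functional equation $\mathcal{L}(u,\chi_D)=(qu^2)^g\mathcal{L}(1/(qu),\chi_D)$, and fold the tail $n>4g$ back into the range $n\le 4g-1$ at $u=q^{-1/2}$. The paper does not give its own proof but cites Lemma~2.1 of \cite{aflorea2}, which uses precisely this argument; your bookkeeping at the centre (and the resulting $\le 4g$ versus $\le 4g-1$ asymmetry) is handled correctly, and you correctly read the stated lemma as a per-$D$ identity with the $\sum_D$ omitted from the right-hand side as a typographical slip.
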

\begin{proof}
See Lemma $2.1$ in \cite{aflorea2}.
\end{proof}
The following lemma allows us to express sums over square-free polynomials in terms of sums over monic polynomials. For a proof of this, see Lemma $2.1$ in \cite{aflorea}. 
\begin{lemma}
For $f$ a monic polynomial in $\mathbb{F}_q[x]$, we have that
$$ \sum_{D \in \mathcal{H}_{2g+1}} \chi_D(f)= \sum_{C | f^{\infty}} \sum_{h \in \mathcal{M}_{2g+1-2 d(C)}} \chi_f(h) - q \sum_{C | f^{\infty}} \sum_{h \in \mathcal{M}_{2g-1-2 d(C)}} \chi_f(h),$$
where the first summation is over monic polynomials $C$ whose prime factors are among the prime factors of $f$. \label{sumd} \end{lemma}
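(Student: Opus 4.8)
The plan is to turn the sum over square-free $D$ into sums over all monic polynomials by the two standard moves: detecting square-freeness with the M\"obius function, and flipping the character via quadratic reciprocity (which is sign-free since $q \equiv 1 \pmod 4$). It is cleanest to carry this out at the level of generating functions in a formal variable $u$ and to extract the coefficient of $u^{2g+1}$ only at the very end.

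First I would record that $\chi_D(f) = \left( \frac{D}{f} \right)$ is completely multiplicative in $D$, and that $\left( \frac{P}{f} \right) = 0$ whenever $P | f$ (immediately from the definition of the Legendre symbol). Hence summing over monic square-free $D$ produces the Euler product
$$ \sum_{\substack{D \text{ monic} \\ \text{square-free}}} \chi_D(f) \, u^{d(D)} = \prod_{P \nmid f} \left( 1 + \Bigl( \tfrac{P}{f} \Bigr) u^{d(P)} \right). $$
For $P \nmid f$, reciprocity gives $\left( \frac{P}{f} \right) = \left( \frac{f}{P} \right) = \chi_f(P)$, and since $\chi_f(P)^2 = 1$ there each local factor can be rewritten as
$$ 1 + \chi_f(P) u^{d(P)} = \frac{ (1 - u^{d(P)})(1+u^{d(P)}) }{ 1 - \chi_f(P) u^{d(P)} } = \frac{ 1 - u^{2 d(P)} }{ 1 - \chi_f(P) u^{d(P)} }. $$
Taking the product over $P \nmid f$, then restoring the missing Euler factors, and using $\prod_P (1 - u^{2 d(P)}) = \mathcal{Z}(u^2)^{-1} = 1 - q u^2$, $\ \prod_{P | f} (1 - u^{2 d(P)})^{-1} = \sum_{C | f^\infty} u^{2 d(C)}$, and $\sum_{h \text{ monic}} \chi_f(h) u^{d(h)} = \prod_P (1 - \chi_f(P) u^{d(P)})^{-1}$ (the factors with $P | f$ being harmless since then $\chi_f(P) = 0$), one arrives at the identity
$$ \sum_{\substack{D \text{ monic} \\ \text{square-free}}} \chi_D(f) \, u^{d(D)} = (1 - q u^2) \Bigl( \sum_{C | f^\infty} u^{2 d(C)} \Bigr) \Bigl( \sum_{h \text{ monic}} \chi_f(h) u^{d(h)} \Bigr), $$
valid as an identity of rational functions (equivalently, formal power series) in $u$, for every monic $f$. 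This is the same thing as applying reciprocity first to get $\chi_D(f)=\chi_f(D)$, then writing $\mu^2(D)=\sum_{\ell^2\mid D}\mu(\ell)$ and using $\sum_{(\ell,f)=1}\mu(\ell)u^{2d(\ell)} = (1-qu^2)\sum_{C\mid f^\infty}u^{2d(C)}$ to reorganize, but the generating-function bookkeeping is tidier.

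Finally I would compare the coefficient of $u^{2g+1}$ on the two sides. On the left it is exactly $\sum_{D \in \mathcal{H}_{2g+1}} \chi_D(f)$, since $\mathcal{H}_{2g+1}$ is precisely the set of monic square-free polynomials of degree $2g+1$. On the right, multiplying out $(1 - qu^2) \bigl( \sum_{C} u^{2d(C)} \bigr) \bigl( \sum_m \bigl( \sum_{h \in \mathcal{M}_m} \chi_f(h) \bigr) u^m \bigr)$, the coefficient of $u^{2g+1}$ is
$$ \sum_{C | f^\infty} \sum_{h \in \mathcal{M}_{2g+1 - 2d(C)}} \chi_f(h) \; - \; q \sum_{C | f^\infty} \sum_{h \in \mathcal{M}_{2g-1-2d(C)}} \chi_f(h), $$
with the convention that a sum over $\mathcal{M}_m$ is empty when $m < 0$; this is precisely the claimed formula. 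There is no genuine obstacle here: the only points requiring attention are that reciprocity is invoked only for coprime arguments (guaranteed by restricting the Euler product to $P \nmid f$), and that nothing in the argument uses $f$ to be square-free or $\chi_f$ to be primitive, so the identity holds for all monic $f$ — which one can sanity-check against $f = 1$, where both sides evaluate to $|\mathcal{H}_{2g+1}| = q^{2g}(q-1)$.
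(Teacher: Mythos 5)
Your proof is correct and follows the standard route for this kind of identity. The paper itself does not prove Lemma~\ref{sumd} but cites Lemma~2.1 of \cite{aflorea}; what one finds there is exactly the Möbius-detection-plus-reciprocity argument you mention at the end of your proposal (write $\chi_D(f)=\chi_f(D)$ by reciprocity, insert $\mu^2(D)=\sum_{\ell^2\mid D}\mu(\ell)$, then use $\sum_{(\ell,f)=1}\mu(\ell)u^{2d(\ell)}=(1-qu^2)\sum_{C\mid f^\infty}u^{2d(C)}$). Your generating-function presentation is the same computation packaged as an Euler-product manipulation with a single coefficient extraction at the end, and the hypotheses are used in the right places (reciprocity only between coprime monic arguments, $\chi_f(P)^2=1$ only for $P\nmid f$, the local factors at $P\mid f$ restored explicitly). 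The only cosmetic wrinkle is the intermediate line $1+\chi_f(P)u^{d(P)}=\bigl((1-u^{d(P)})(1+u^{d(P)})\bigr)/\bigl(1-\chi_f(P)u^{d(P)}\bigr)$: the numerator you would naturally produce is $(1-\chi_f(P)u^{d(P)})(1+\chi_f(P)u^{d(P)})$, which happens to equal $1-u^{2d(P)}$ because $\chi_f(P)^2=1$, so the displayed identity is true but the factorization shown is not the one that arises; a reader may stumble here, so it would be cleaner to write $1-u^{2d(P)}$ directly. Your $f=1$ sanity check, giving $|\mathcal{H}_{2g+1}|=q^{2g}(q-1)$ on both sides, is a nice confirmation.
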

Now recall the exponential function introduced in \cite{hayes}. For $a \in \mathbb{F}_q((\frac{1}{x}))$, let
$$e(a) = e^{\frac{2 \pi i a_1}{q}},$$ where $a_1$ is the coefficient of $1/x$ in the Laurent expansion of $a$. 
Then the generalized Gauss sum is defined by
$$
G(u,\chi) = \displaystyle \sum_{V \pmod f} \chi(V) e \left(\frac{uV}{f}\right).
$$

The following lemma is the analog of the Poisson summation formula in function fields, and the proof can be found in \cite{aflorea}.
\begin{lemma}
Let $f$ be a monic polynomial of degree $n$ in $\mathbb{F}_q[x]$ and let $m$ be a positive integer. If the degree $n$ of $f$ is even, then
\begin{equation}
\sum_{h \in \mathcal{M}_m} \chi_f(g) = \frac{q^m}{|f|} \left[ G(0,\chi_f) + (q-1) \sum_{V \in \mathcal{M}_{\leq n-m-2} } G(V,\chi_f) - \sum_{V \in \mathcal{M}_{n-m-1}} G(V,\chi_f) \right]. \label{even}
\end{equation}
If $n$ is odd, then
\begin{equation}
 \sum_{h \in \mathcal{M}_m} \chi_f(h) = \frac{q^m}{|f|} \sqrt{q} \sum_{V \in \mathcal{M}_{n-m-1}} G(V,\chi_f).  \label{odd} \end{equation}
\label{poissonmonic}
\end{lemma}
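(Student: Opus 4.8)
The plan is to prove both identities by finite Fourier analysis on the quotient ring $\mathbb{F}_q[x]/f$ — this is the ``Poisson summation'' of the lemma's name. Since $\chi_f$ is a Dirichlet character modulo $f$, Fourier inversion with respect to the additive characters $h \mapsto e(Vh/f)$, $V \bmod f$, together with the standard orthogonality relation (the sum $\sum_{V \bmod f} e(VR/f)$ equals $|f|$ if $f \mid R$ and $0$ otherwise), gives
$$\chi_f(h) = \frac{1}{|f|} \sum_{V \bmod f} G(V,\chi_f)\, e\!\left( \frac{-Vh}{f} \right).$$
Substituting this into $\sum_{h \in \mathcal{M}_m} \chi_f(h)$ and interchanging the order of summation, the whole problem reduces to evaluating, for each residue $V$, the additive character sum $S_m(V) := \sum_{h \in \mathcal{M}_m} e(-Vh/f)$, and then reorganizing the resulting sum over $V$.

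The core computation is $S_m(V)$. Replacing $V$ by its reduction modulo $f$ changes $Vh/f$ by a polynomial and so does not affect $e(\cdot)$; thus we may assume $\deg V < n$. Writing $h = x^m + a_{m-1}x^{m-1} + \cdots + a_0$ with $a_i \in \mathbb{F}_q$ and using additivity, $e(-Vh/f) = e(-Vx^m/f)\prod_{i=0}^{m-1} e(-a_i V x^i / f)$, and $e(-a_i Vx^i/f) = e^{-2\pi i a_i \beta_i/q}$, where $\beta_i \in \mathbb{F}_q$ is the coefficient of $x^{-1}$ in $Vx^i/f$, i.e.\ the coefficient of $x^{-1-i}$ in the Laurent expansion of $V/f$. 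Summing each $a_i$ over $\mathbb{F}_q$ independently contributes $q$ if $\beta_i = 0$ and $0$ otherwise, so $S_m(V) = q^m\, e(-Vx^m/f)$ if $\beta_0 = \cdots = \beta_{m-1} = 0$ and $S_m(V) = 0$ otherwise. Since the Laurent expansion of $V/f$ begins at $x^{\deg V - n}$, the condition $\beta_0 = \cdots = \beta_{m-1} = 0$ is precisely $\deg V \le n - m - 1$; and in that range, reading off the top Laurent coefficient (here it matters that $f$ is monic, so no lower-order terms of $f$ interfere) shows $e(-Vx^m/f) = 1$ if $\deg V \le n - m - 2$ and $e(-Vx^m/f) = e^{-2\pi i\, \mathrm{lc}(V)/q}$ if $\deg V = n - m - 1$, where $\mathrm{lc}(V)$ is the leading coefficient. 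I expect this bookkeeping — the exact degree thresholds and the normalization of $e(\cdot)$ — to be the most delicate part of the argument; everything after it is reorganization.

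It remains to assemble the pieces. We split the surviving $V$-sum into the range $\deg V \le n - m - 2$ (which includes $V = 0$) and the range $\deg V = n - m - 1$, and in each we write a nonzero $V$ as $c\widetilde V$ with $c = \mathrm{lc}(V) \in \mathbb{F}_q^{\times}$ and $\widetilde V$ monic, using $G(c\widetilde V, \chi_f) = \overline{\chi_f(c)}\, G(\widetilde V, \chi_f) = \chi_f(c)\, G(\widetilde V, \chi_f)$ to extract the constant. This leaves the one-variable sums $\sum_{c \in \mathbb{F}_q^{\times}} \chi_f(c)$ and $\sum_{c \in \mathbb{F}_q^{\times}} \chi_f(c)\, e^{-2\pi i c / q}$, and here the parity of $n = \deg f$ finally enters: for a nonzero constant $c$ one has $\chi_f(c) = \left( \tfrac{c}{q} \right)^{\deg f}$, since $\left( \tfrac{c}{P} \right) = c^{(|P|-1)/2} = \left( \tfrac{c}{q} \right)^{\deg P}$ for every prime $P$ and one multiplies over the prime factors of $f$. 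Hence the two sums equal $(q-1,\, -1)$ when $n$ is even and $\big( 0,\ \sqrt{q}\, \big)$ when $n$ is odd, the latter value being the classical quadratic Gauss sum, which equals $+\sqrt{q}$ because $q \equiv 1 \pmod 4$. Finally, $G(0, \chi_f) = \sum_{W \bmod f} \chi_f(W)$ is nonzero only when $\chi_f$ is the principal character modulo $f$, i.e.\ only when $f$ is a perfect square; in particular it vanishes whenever $n$ is odd, which is exactly why \eqref{odd} carries no $G(0,\chi_f)$ term while \eqref{even} does. Collecting everything produces \eqref{even} in the even case and \eqref{odd} in the odd case.
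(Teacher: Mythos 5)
Your proof is correct. The paper does not include its own proof of this lemma (it cites an external reference for it), but the argument you give — Fourier inversion of $\chi_f$ against the additive characters of $\mathbb{F}_q[x]/f$, the evaluation of $S_m(V)=\sum_{h\in\mathcal{M}_m}e(-Vh/f)$ by summing coefficient-by-coefficient so that only $V$ of degree $\le n-m-1$ survive, and then the reorganization of the surviving $V$ by degree and leading coefficient with the quadratic Gauss sum $\sum_{c\in\mathbb{F}_q^\times}\chi_f(c)e^{-2\pi i c/q}=\sqrt q$ appearing in the odd case — is the standard route and almost certainly the same one used in the cited source. All the delicate bookkeeping you flagged (the degree thresholds $n-m-1$ vs.\ $n-m-2$, the extraction of $e^{-2\pi i\,\mathrm{lc}(V)/q}$, the identity $\chi_f(c)=\bigl(\tfrac{c}{q}\bigr)^{\deg f}$ via $\bigl(\tfrac{c}{P}\bigr)=c^{(|P|-1)/2}$, and the vanishing of $G(0,\chi_f)$ for $n$ odd) checks out.
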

We also need the following.
\begin{lemma}
Suppose that $q \equiv 1 \pmod 4$. Then 
\begin{enumerate}
\item If $(f,h)=1$, then $G(V, \chi_{fh})= G(V, \chi_f) G(V,\chi_h)$.
\item Write $V= V_1 P^{\alpha}$ where $P \nmid V_1$.
Then 
 $$G(V , \chi_{P^i})= 
\begin{cases}
0 & \mbox{if }  i \leq \alpha \text{ and } i \text{ odd} \\
\phi(P^i) & \mbox{if }  i \leq \alpha \text{ and } i \text{ even} \\
-|P|^{i-1} & \mbox{if }  i= \alpha+1 \text{ and } i \text{ even} \\
\left( \frac{V_1}{P} \right) |P|^{i-1} |P|^{1/2} & \mbox{if } i = \alpha+1 \text{ and } i \text{ odd} \\
0 & \mbox{if } i \geq 2+ \alpha .
\end{cases}$$ 
\end{enumerate} \label{computeg}
\end{lemma}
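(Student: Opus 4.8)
The first assertion is the standard multiplicativity of Gauss sums via the Chinese Remainder Theorem. Since $(f,h)=1$, pick $\overline f,\overline h$ with $f\overline f\equiv 1\pmod h$ and $h\overline h\equiv 1\pmod f$; then $\tfrac{1}{fh}\equiv \tfrac{\overline h}{f}+\tfrac{\overline f}{h}$ modulo $\mathbb{F}_q[x]$, so by additivity of $e(\cdot)$ and the fact that $e$ kills polynomials, $e\big(\tfrac{uV}{fh}\big)=e\big(\tfrac{uV\overline h}{f}\big)\,e\big(\tfrac{uV\overline f}{h}\big)$. Parametrizing $u\bmod fh$ by the pair $(u\bmod f,\ u\bmod h)$ and writing $\chi_{fh}=\chi_f\chi_h$ — where quadratic reciprocity, valid because $q\equiv1\pmod4$, is what lets one regard $\chi_f$ and $\chi_h$ as functions of the respective residues — the double sum factors. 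After the change of variables $a\mapsto ha$ in the $f$-sum and $b\mapsto fb$ in the $h$-sum one gets $G(V,\chi_{fh})=\Big(\tfrac{f}{h}\Big)\Big(\tfrac{h}{f}\Big)\,G(V,\chi_f)\,G(V,\chi_h)$, and the stray factor $\Big(\tfrac{f}{h}\Big)\Big(\tfrac{h}{f}\Big)$ equals $1$ by reciprocity once more.

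For the second assertion, write $V=V_1P^\alpha$ with $P\nmid V_1$ and use reciprocity (again exploiting $q\equiv1\pmod4$) to replace $\chi_{P^i}(u)$ by $\big(\tfrac{u}{P}\big)^i$, the $i$-th power of the Legendre symbol mod $P$, so that $G(V,\chi_{P^i})=\sum_{u\bmod P^i}\big(\tfrac{u}{P}\big)^i\,e\big(\tfrac{uV_1}{P^{i-\alpha}}\big)$. If $i\le\alpha$ the argument $V/P^i=V_1P^{\alpha-i}$ is a polynomial, so the exponential is $1$ and the sum is $\sum_{u\bmod P^i}\big(\tfrac{u}{P}\big)^i$, which equals $\phi(P^i)$ for $i$ even and $0$ for $i$ odd, by orthogonality of the quadratic character mod $P$. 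If $i\ge\alpha+1$ and $i$ is even, then $\big(\tfrac uP\big)^i$ is the indicator of $P\nmid u$, so $G(V,\chi_{P^i})=\sum_{u\bmod P^i}e\big(\tfrac{uV}{P^i}\big)-\sum_{w\bmod P^{i-1}}e\big(\tfrac{wV}{P^{i-1}}\big)$, and the elementary orthogonality relation $\sum_{u\bmod g}e(uV/g)=|g|\,\mathbf{1}_{g\mid V}$ applied to each sum gives $-|P|^{i-1}$ when $i=\alpha+1$ and $0$ when $i\ge\alpha+2$. If $i\ge\alpha+1$ and $i$ is odd, write $u=b+Pc$ with $b$ mod $P$ and $c$ mod $P^{i-1}$; then $\big(\tfrac uP\big)=\big(\tfrac bP\big)$ and the exponential separates, and the $c$-sum $\sum_{c\bmod P^{i-1}}e\big(cV_1/P^{\,i-\alpha-1}\big)$ equals $|P|^{i-1}$ for $i=\alpha+1$ and vanishes for $i\ge\alpha+2$ (since $P\nmid V_1$). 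In the surviving case $i=\alpha+1$ one is left with $|P|^{\alpha}\sum_{b\bmod P}\big(\tfrac bP\big)\,e(bV_1/P)$; the substitution $b\mapsto V_1^{-1}b$ extracts $\big(\tfrac{V_1}{P}\big)$ and reduces the sum to the quadratic Gauss sum $\sum_{b\bmod P}\big(\tfrac bP\big)e(b/P)$, which equals $|P|^{1/2}$ because $q\equiv1\pmod4$; this yields $\big(\tfrac{V_1}{P}\big)\,|P|^{i-1}|P|^{1/2}$. Collecting the cases — and noting $G(V,\chi_{P^i})=0$ whenever $i\ge\alpha+2$ in either parity — gives the stated table.

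The one genuinely non-formal input is the evaluation of the quadratic Gauss sum $\sum_{b\bmod P}\big(\tfrac bP\big)e(b/P)=|P|^{1/2}$ with the correct sign, and this is precisely where the hypothesis $q\equiv1\pmod4$ is essential (for $q\equiv3\pmod4$ one picks up an extra factor of $\sqrt{-1}$). Everything else is bookkeeping, but it does demand care: one must track the parity of $i$ and the sign of $i-\alpha-1$ simultaneously, and one should verify that passing from $\chi_{P^i}(u)=\big(\tfrac{P^i}{u}\big)$ to $\big(\tfrac{u}{P}\big)^i$ via reciprocity leaves behind no constant depending on the leading coefficient of $u$ — which it does not, once more thanks to $q\equiv1\pmod4$.
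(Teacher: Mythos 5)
The paper does not prove this lemma; it is stated without proof as a known fact, presumably established in the author's earlier paper \cite{aflorea}. So there is nothing in the present text to compare your proof against, and I will evaluate it on its own terms.

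The skeleton is the standard one and the bookkeeping is largely correct: CRT for part (1), and for part (2) the decomposition $u=b+Pc$, the orthogonality evaluation of the $c$-sum, the $i\le\alpha$ cases, the even-$i$ cases via the sieve $\mathbf{1}_{P\nmid u}$, and the normalization $b\mapsto \overline{V_1}\,b$ all check out. Two points need repair.

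First, the character in $G(V,\chi_{P^i})$ should be understood as the Dirichlet character $u\mapsto\big(\tfrac{u}{P^i}\big)=\big(\tfrac{u}{P}\big)^i$ modulo $P^i$, so no reciprocity is needed at that step. Your parenthetical claim that passing from $\big(\tfrac{P^i}{u}\big)$ to $\big(\tfrac{u}{P}\big)^i$ leaves behind no leading-coefficient factor ``thanks to $q\equiv1\pmod4$'' is actually false as written: writing $u=c\,u_0$ with $c\in\mathbb{F}_q^\times$ and $u_0$ monic, reciprocity gives $\big(\tfrac{P^i}{u}\big)=\big(\tfrac{u_0}{P}\big)^i$, whereas $\big(\tfrac{u}{P}\big)^i=\big(\tfrac{c}{P}\big)^i\big(\tfrac{u_0}{P}\big)^i$, and for $i$ odd and $d(P)$ odd the constant character $c\mapsto\big(\tfrac{c}{P}\big)=c^{(q-1)/2}$ on $\mathbb{F}_q^\times$ is nontrivial regardless of $q\bmod 4$. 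Your computation is saved only because the Gauss-sum character is $\big(\tfrac{\cdot}{P^i}\big)$ from the start; the reasoning you offer for dismissing the subtlety does not hold.

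Second, and this is the genuine gap, the identity $\sum_{b\bmod P}\big(\tfrac{b}{P}\big)e(b/P)=+|P|^{1/2}$ is asserted rather than proved. The hypothesis $q\equiv1\pmod4$ gives $\big(\tfrac{-1}{P}\big)=1$ and hence $g(P)^2=|P|$ for every monic irreducible $P$, but pinning the sign to $+$ uniformly in $P$ — and for the ``coefficient of $1/x$'' additive character used in this paper, not the trace character of $\mathbb{F}_q[x]/(P)$ — is the entire content of the lemma in the $i=\alpha+1$ odd case. You are right to flag it as the one non-formal input, but a complete proof must supply it rather than cite $q\equiv1\pmod4$ as if that settled the sign.
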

Using the previous two lemmas, one can prove an analog of the Polya-Vinogradov inequality.
\begin{lemma} \label{pv}
For $f$ a non-square polynomial and $m < d(f)$, we have the following inequality:
$$ \sum_{h \in \mathcal{M}_m} \chi_f(h) \ll \sqrt{|f|}.$$
\end{lemma}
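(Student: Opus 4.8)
The plan is to reduce to the case of a square-free modulus and then apply the Poisson summation formula of Lemma~\ref{poissonmonic}, bounding the resulting Gauss sums by the trivial bound supplied by Lemma~\ref{computeg}.

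\emph{Reduction to square-free $f$.} Write $f=\prod_P P^{e_P}$. Since $f$ is not a square, $g:=\prod_{e_P\text{ odd}}P$ is a non-constant square-free polynomial, and with $r:=\prod_{e_P\text{ even}}P$ one has $(g,r)=1$ and $\chi_f(h)=\chi_g(h)\,\mathds{1}[(h,r)=1]$. Detecting the condition $(h,r)=1$ by M\"obius inversion, I would write
\[ \sum_{h\in\mathcal{M}_m}\chi_f(h)=\sum_{e\mid r}\mu(e)\chi_g(e)\sum_{h_1\in\mathcal{M}_{m-d(e)}}\chi_g(h_1). \]
For each $e\mid r$ the inner sum is empty when $m-d(e)<0$ and vanishes when $m-d(e)\ge d(g)$ (recall $\sum_{h_1\in\mathcal{M}_k}\chi_g(h_1)=0$ as soon as $k\ge d(g)$, since $g$ is not a square), so it remains to bound $\sum_{h_1\in\mathcal{M}_k}\chi_g(h_1)$ for $0\le k<d(g)$ and then to sum the at most $2^{\omega(r)}$ surviving terms.

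\emph{The square-free case.} Fix a non-constant square-free $g$, put $n=d(g)$, and take $1\le k\le n-1$ (the case $k=0$ being trivial). Applying Lemma~\ref{poissonmonic} to $\sum_{h_1\in\mathcal{M}_k}\chi_g(h_1)$, the sum becomes $q^k/|g|$ times $G(0,\chi_g)$ together with one or two sums of the shape $\sum_{V\in\mathcal{M}_j}G(V,\chi_g)$ with $0\le j\le n-k-1$ (the case distinction being on the parity of $n$). When $n$ is even the term $G(0,\chi_g)=\sum_{V\bmod g}\chi_g(V)$ is $0$ since $\chi_g$ is a non-principal character (equivalently, by the first line of the formula in Lemma~\ref{computeg} applied to each $P\mid g$). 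For the remaining sums, Lemma~\ref{computeg} gives $G(V,\chi_g)=\chi_g(V)\,|g|^{1/2}$ for square-free $g$, hence $|G(V,\chi_g)|\le|g|^{1/2}$ for every $V$; so each dual sum has absolute value $\le q^{n-k-1}|g|^{1/2}$ (the sum over $V\in\mathcal{M}_{\le n-k-2}$ in the even case being a geometric series of the same order). Collecting the powers of $q$ — here one uses $q\ge 2$ — gives $\bigl|\sum_{h_1\in\mathcal{M}_k}\chi_g(h_1)\bigr|\le q^{(n-1)/2}\le|g|^{1/2}$.

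\emph{Assembling.} Substituting back, $\bigl|\sum_{h\in\mathcal{M}_m}\chi_f(h)\bigr|\ll 2^{\omega(r)}|g|^{1/2}$. Since $q\ge 2$ and $d(r)\ge\omega(r)$ one has $2^{\omega(r)}\le q^{d(r)}=|r|$, while $gr^2\mid f$ forces $|g|\,|r|^2\le|f|$; therefore $2^{\omega(r)}|g|^{1/2}\le(|g|\,|r|^2)^{1/2}\le|f|^{1/2}$, which is the claimed bound (in fact with a small explicit constant). The steps that need a little care are the vanishing of $G(0,\chi_g)$ for non-square $g$ and the bookkeeping of the $q$-powers in the even-degree Poisson formula — in particular that the dual sums have non-negative length, which is exactly where the hypothesis $m<d(f)$ (hence $k<d(g)$) is used — together with the final elementary inequality $2^{\omega(r)}|g|^{1/2}\le|f|^{1/2}$; everything else is routine.
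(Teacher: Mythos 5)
Your proof is correct and follows essentially the same route as the paper's: reduce to a square-free modulus by M\"obius inversion (the paper writes $f=Df_1^2$, which after the vanishing of $\chi_D(B)$ for $(B,D)\ne 1$ amounts to the same sum over $e\mid r$), and then bound the square-free case via Lemma~\ref{poissonmonic} and the explicit Gauss-sum formula $G(V,\chi_g)=\chi_g(V)\sqrt{|g|}$ from Lemma~\ref{computeg}. You spell out two steps the paper leaves implicit --- the vanishing of the inner sum when $m-d(e)\ge d(g)$, and the final inequality $2^{\omega(r)}|g|^{1/2}\le|f|^{1/2}$ --- which is a useful elaboration but not a different argument.
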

\begin{proof}
We first prove the inequality for $f$ a square-free polynomial. By Lemma \ref{computeg}, $G(V,\chi_f)= \chi_f(V) \sqrt{|f|}.$ Then using the Poisson summation formula and trivially bounding the sum over $V$ in Lemma \ref{poissonmonic} gives the desired upper bound.

Now assume that $f$ is not necessarily square-free. Write $f= Df_1^2$, where $D$ is a square-free polynomial. Then
\begin{align*}
\sum_{h \in \mathcal{M}_m} \chi_f(h) = \sum_{\substack{h \in \mathcal{M}_m \\ (h,f_1)=1}} \chi_D(h) = \sum_{B | f_1} \mu(B) \sum_{\substack{ h \in \mathcal{M}_m \\ B|h}} \chi_D(h) = \sum_{B | f_1} \mu(B) \chi_D(B) \sum_{h_1 \in \mathcal{M}_{m - d(B)}} \chi_D(h_1) \ll \sqrt{|f|},
\end{align*} where the last inequality follows by using the upper bound proved previously on the character sum.
\end{proof}
We also have the following explicit formula in function fields, relating sums over zeros to sums over irreducibles. The proof can be found in \cite{rudnickfaifman} (see Lemma $2.2$).
\begin{lemma}
\label{explicit}
Let $h(\theta) = \sum_{|k| \leq N} \hat{h}(k) e(k \theta)$ be a real valued, even trigonometric polynomial. For $D$ a square-free polynomial of degree $2g+1$, we have
$$ \sum_{j=1}^{2g} h( \theta_j) = 2g \int_0^1 h(\theta) \, d\theta -2 \sum_f \hat{h}(d(f)) \frac{\chi_D(f) \Lambda(f)}{\sqrt{|f|}}.$$
\end{lemma}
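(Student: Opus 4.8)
The plan is to start from the factorization \eqref{produs} of $\mathcal{L}(u,\chi_D)$ and extract, by comparing two expressions for its logarithmic derivative, the power sums $\sum_{j=1}^{2g}\alpha_j^n$ in terms of the twisted von Mangoldt sums $\sum_{d(f)=n}\Lambda(f)\chi_D(f)$. Concretely, I would compute $u\frac{d}{du}\log\mathcal{L}(u,\chi_D)$ in two ways: on one hand, from \eqref{produs}, it equals $-\sum_{j=1}^{2g}\sum_{n\geq 1}(\sqrt{q}\,\alpha_j)^n u^n$; on the other hand, from the Euler product $\mathcal{L}(u,\chi_D)=\prod_P(1-\chi_D(P)u^{d(P)})^{-1}$, it equals $\sum_{n\geq 1}\bigl(\sum_{d(f)=n}\Lambda(f)\chi_D(f)\bigr)u^n$, using the definition of $\Lambda$. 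Since $\mathcal{L}(u,\chi_D)$ is a polynomial of degree $2g$, both sides are honest power series and their coefficients may be equated term by term, which gives
\[
\sum_{j=1}^{2g}\alpha_j^{\,n} \;=\; -\,q^{-n/2}\sum_{d(f)=n}\Lambda(f)\chi_D(f)\qquad (n\geq 1).
\]

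Next I would insert the Fourier expansion of $h$. Since $h(\theta)=\sum_{|k|\leq N}\hat h(k)e(k\theta)$, interchanging the finite sums gives $\sum_{j=1}^{2g}h(\theta_j)=\sum_{|k|\leq N}\hat h(k)\sum_{j=1}^{2g}\alpha_j^{\,k}$. The $k=0$ term contributes $\hat h(0)\cdot 2g = 2g\int_0^1 h(\theta)\,d\theta$, since $\hat h(0)$ is the mean value of $h$. For the remaining terms I would use that the multiset $\{\alpha_j\}$ is stable under $\alpha\mapsto\alpha^{-1}$ (a consequence of the functional equation $\mathcal{L}(u,\chi_D)=(qu^2)^g\mathcal{L}(1/(qu),\chi_D)$) together with $|\alpha_j|=1$ (Weil's Riemann hypothesis, recorded above), so that $\sum_j\alpha_j^{-k}=\overline{\sum_j\alpha_j^{\,k}}=\sum_j\alpha_j^{\,k}$; then, pairing $k$ with $-k$ and using $\hat h(k)=\hat h(-k)$ (valid because $h$ is real and even), the total contribution of $k\neq 0$ equals $2\sum_{k=1}^{N}\hat h(k)\sum_{j=1}^{2g}\alpha_j^{\,k}$.

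Finally I would substitute the power-sum identity and repackage: the $k\neq 0$ part becomes
\[
-2\sum_{k=1}^{N}\hat h(k)\,q^{-k/2}\sum_{d(f)=k}\Lambda(f)\chi_D(f)\;=\;-2\sum_f \hat h(d(f))\,\frac{\chi_D(f)\Lambda(f)}{\sqrt{|f|}},
\]
the outer sum over all monic $f$ being effectively finite since $\hat h(d(f))=0$ once $d(f)>N$. Adding the $k=0$ term yields the claimed formula. I do not anticipate a genuine obstacle: the argument reduces to a polynomial identity in $u$ plus elementary bookkeeping with Fourier coefficients. The only point that deserves explicit mention is the reality of $\sum_j\alpha_j^{\,k}$, i.e. the self-conjugate symmetry of the inverse zeros, and this is precisely where the hypotheses enter — that $h$ is real and even, and that $\deg D = 2g+1$ is odd so that the functional equation has the stated symmetric shape (with exactly $2g$ inverse roots on the unit circle).
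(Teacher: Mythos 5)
Your proof is correct and is essentially the standard function-field explicit formula argument: equating coefficients in the two power-series expansions of $u\frac{d}{du}\log\mathcal{L}(u,\chi_D)$ (Euler product versus factorization over inverse roots) gives the power-sum identity, and the rest is bookkeeping with the Fourier coefficients of $h$ together with the root symmetry $\{\alpha_j\}=\{\alpha_j^{-1}\}$. The paper simply cites Rudnick--Faifman (Lemma 2.2) for this, and their argument is the same one you have reconstructed.
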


\subsection{Upper bounds on moments of $L$--functions}

One of the main ingredients in the proof of Theorem \ref{fourth} is obtaining upper bounds for moments of quadratic Dirichlet $L$--functions. This follows from work of Soundararajan \cite{soundupperbounds} on upper bounds for moments of the Riemann-zeta function, conditionally on $RH$. Since the Riemann hypothesis was proven in function fields, the upper bounds we obtain are unconditional. We note also the work of Chandee \cite{chandeeriemann} on shifted moments for the Riemann-zeta function and the work of Harper \cite{harper} for getting sharp upper bounds. 

To obtain Theorem \ref{fourth}, we will also need upper bounds for a product of shifted moments of quadratic Dirichlet $L$--functions, when the number of $L$--functions grows (slowly) with the genus of the family. Proving this is a bit more delicate, and relies on using ideas as in the work of Chandee and Soundararajan \cite{soundfai}, or Carneiro and Chandee \cite{faicarneiro}. We will elaborate on this in section \ref{upper}.

When considering upper bounds for a single shifted $L$--function, we have the following result.
\begin{theorem}
Let $u=e^{i \theta}$, with $\theta \in [0, \pi)$. Then for every positive $k$ and any $\epsilon>0$, 
$$\sum_{D \in \mathcal{H}_{2g+1}} \left| \mathcal{L} \left( \frac{u}{\sqrt{q}}, \chi_D \right) \right|^k \ll q^{2g+1} g^{\epsilon} \text{exp} \left( k \mathcal{M}(u,g)+\frac{k^2}{2} \mathcal{V}(u,g) \right),$$ where $ \mathcal{M}(u,g) = \frac{1}{2} \log \left( \min \{ g, \frac{1}{2 \theta} \} \right)$ and $\mathcal{V}(u,g) = \mathcal{M}(u,g) + \frac{\log(g)}{2}.$
\label{upperbound}
\end{theorem}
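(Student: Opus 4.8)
The plan is to adapt Soundararajan's conditional upper bound method, which is unconditional here because the Riemann hypothesis over function fields is a theorem of Weil. First I would establish the analog of the key pointwise bound: for a square-free $D$ of degree $2g+1$, $\log |\mathcal{L}(u/\sqrt q, \chi_D)|$ is bounded above by a short Dirichlet polynomial over primes plus a controlled remainder. Concretely, using the product formula \eqref{produs} together with $|\alpha_j|=1$, one writes $\log|\mathcal{L}(e^{i\theta}/\sqrt q \cdot \sqrt q, \chi_D)| = \sum_j \log|1 - e^{i(\theta + 2\pi\theta_j)}|$; then an argument in the spirit of Lemma \ref{explicit} (the explicit formula relating sums over zeros $\theta_j$ to sums over irreducibles weighted by $\Lambda(f)/\sqrt{|f|}$) lets me trade the sum over zeros for a sum over prime powers. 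The technical heart of Soundararajan's method is choosing an optimal truncation parameter $x$ (here a degree cutoff $\ell$, so $x = q^\ell$) and showing
$$
\log\left|\mathcal{L}\left(\tfrac{u}{\sqrt q},\chi_D\right)\right| \le \sum_{d(f) \le \ell} \frac{\Lambda(f)\,\chi_D(f)}{|f|^{1/2 + \lambda/\ell}\, d(f)} \cdot (\text{weight}) + \frac{(1+\lambda)\,\ell}{?} + O(1),
$$
for a free parameter $\lambda > 0$, where the main term of the integral $\int_0^1 h\,d\theta$ in the explicit formula contributes the $\mathcal{M}(u,g)$ piece and the length of the sum over primes is what produces the $\frac{\log g}{2}$ correction to $\mathcal{V}$. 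The dependence $\min\{g, \tfrac{1}{2\theta}\}$ appears because for $u$ near $1$ (small $\theta$) the effective conductor/length is governed by $g$, whereas for $\theta$ bounded away from $0$ there is an extra cancellation on the scale $1/\theta$, exactly as in the Riemann-zeta case near the central point versus near the edge of the critical strip.

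Next I would raise the pointwise bound to the $k$-th power and average over $D \in \mathcal{H}_{2g+1}$. Exponentiating, $|\mathcal{L}|^k \le \exp\big(k \cdot (\text{prime sum}) + k\cdot(\text{bounded terms})\big)$, and the prime sum is a short Dirichlet polynomial, so $\exp$ of it can be expanded and the resulting sum over monic polynomials controlled. The averaging uses Lemma \ref{sumd} to convert $\sum_{D \in \mathcal{H}_{2g+1}} \chi_D(f)$ into main-plus-error terms: the diagonal contribution (from $f = \square$) produces $q^{2g+1}$ times the arithmetic factor, and the off-diagonal terms are negligible as long as the Dirichlet polynomial has degree at most $(2+o(1))g$ or so — this is where the $g^\epsilon$ slack is spent. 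A cleaner route, following the Radziwiłł–Soundararajan philosophy, is to bound $\sum_D |\mathcal{L}|^k$ by $\sum_j \operatorname{meas}\{D : \log|\mathcal{L}| \in [V_j, V_{j+1}]\} \cdot e^{k V_{j+1}}$, where the measure of the large-values set is estimated by Markov/Chebyshev applied to a high (but still short) moment of the prime Dirichlet polynomial; the Gaussian shape $\exp(kM + \tfrac{k^2}{2}V)$ then falls out of optimizing over the level $V$, since a Dirichlet polynomial of "variance" $V$ over primes behaves like a Gaussian with that variance when tested against moments of order up to roughly $V$.

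The main obstacle I expect is twofold. First, controlling the contribution of the prime powers $P^j$ with $j \ge 2$ in the explicit formula: over function fields these are genuinely present (the von Mangoldt function is supported on all prime powers), and one must check they contribute only $O(1)$ to $\log|\mathcal{L}|$ — this should follow from $\sum_{j\ge 2}\sum_P \tfrac{d(P)}{|P|^{j/2}\cdot d(P^j)} \ll 1$, i.e. the sum converges because $\sum_P |P|^{-1}$ diverges only logarithmically while $|P|^{-j/2}$ for $j\ge 2$ is summable. Second, and more seriously, getting the right length for the Dirichlet polynomial so that both (a) the off-diagonal terms after applying Lemma \ref{sumd} are $\ll q^{2g+1} g^{-A}$ for all $A$, and (b) the polynomial is still long enough to capture the full size $\mathcal{M}(u,g)$ of $\log|\mathcal{L}|$ — balancing these forces the $\min\{g, 1/(2\theta)\}$ and the precise constant $\tfrac12$ in front of the logarithm, and it is here that the argument must be run carefully rather than by analogy. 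The uniformity in $\theta \in [0,\pi)$ and in $k$ (only $k$ positive, not necessarily an integer) also requires the moment-of-Dirichlet-polynomial estimate to hold with explicit dependence on the number of primes and the order of the moment, which is the standard but delicate combinatorial input.
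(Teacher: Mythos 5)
Your overall framework is correct and matches the paper's: a pointwise upper bound on $\log|\mathcal{L}(u/\sqrt q,\chi_D)|$ via a short Dirichlet polynomial over prime powers, followed by a large-deviation estimate for $N(V,u,g)=|\{D : \log|\mathcal{L}|\ge \mathcal{M}+V\}|$ obtained by applying Markov's inequality to high moments of that Dirichlet polynomial (the paper's Lemma \ref{mom}, an analogue of Soundararajan--Young Lemma 6.3 via Polya--Vinogradov), and then integrating $e^{kV}\,dN(V,u,g)$. That is precisely the structure of the paper's Lemmas \ref{lalfa}, \ref{mom}, \ref{frequency} and the final deduction.

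There are, however, two concrete gaps. First, the hard step in Lemma \ref{lalfa} is not merely ``truncating the explicit formula'': the explicit formula (Lemma \ref{explicit}) applies only to a trigonometric polynomial $h$, and to turn the identity $\log|L(\alpha+it,\chi_D)| = -\tfrac12\sum_j f_2(\theta_j)+O(1)$ into a \emph{one-sided} bound with a prime sum of length $N$ you must replace $f_1$ by a trigonometric polynomial \emph{minorant} $r$ of degree $N$ with $r(x)\le f_1(x)$. The paper constructs the optimal such $r$ using the Carneiro--Vaaler/Beurling--Selberg machinery (Lemma \ref{minorant}, via the theta-functions $\theta_1,\theta_2,\theta_3$ and the measure $d\mu(\lambda)$), and it is this construction that produces the sharp constant $\tfrac{2g}{N+1}\log 2$ in the error term and the explicit coefficients $a_\alpha(m)$ in the prime sum. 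Your plan omits this entirely; without a one-sided approximant the explicit formula gives only an equality and truncation is not justified. Second, you attribute the mean $\mathcal{M}(u,g)$ to the $2g\int_0^1 h\,d\theta$ term in the explicit formula and assert that prime powers $P^j$ with $j\ge 2$ contribute only $O(1)$. Both claims are wrong. In the paper the $2g\int h$ term gives only the small error $\frac{2g}{N+1}\log 2$; the quantity $\mathcal{M}(u,g)=\tfrac12\log\min\{g,1/(2\theta)\}$ arises from the $f=P^2$ terms via the bound $\sum_{d(P)\le N/2}\frac{a_0(2d(P))d(P)\cos(2\theta d(P))}{|P|}\le \mathcal{M}(u,g)+\frac{g}{N+1}+O(1)$ (using Lemma \ref{ubvar}), and only $P^j$ with $j\ge3$ are $O(1)$. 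The variance piece $\mathcal{V}=\mathcal{M}+\tfrac12\log g$ then comes from $\sum_{d(P)\le N_0}|a(P)|^2/|P|$ with $a(P)\approx \cos(\theta d(P))$, so that $|a(P)|^2=\tfrac12(1+\cos 2\theta d(P))$ yields $\tfrac12\log N_0 + \mathcal{M}(u,g)$. Misplacing the source of $\mathcal{M}$ would derail the bookkeeping in the large-deviation estimate.
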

The corollary immediately follows from the theorem above.
\begin{corollary}
With the same notation as before,
$$\sum_{D \in \mathcal{H}_{2g+1}} \left| \mathcal{L} \left( \frac{u}{\sqrt{q}}, \chi_D \right) \right|^4 \ll q^{2g+1} g^{4+\epsilon} \left( \min \Big \{g, \frac{1}{ \theta}\Big  \} \right)^6.$$ \label{cor4}
\end{corollary}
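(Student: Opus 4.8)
The plan is to simply specialize Theorem \ref{upperbound} to the exponent $k=4$ and simplify the resulting exponential factor. Setting $k=4$ in Theorem \ref{upperbound} yields
$$\sum_{D \in \mathcal{H}_{2g+1}} \left| \mathcal{L}\left(\frac{u}{\sqrt q}, \chi_D\right)\right|^4 \ll q^{2g+1} g^{\epsilon} \exp\left(4\mathcal{M}(u,g) + 8\mathcal{V}(u,g)\right).$$
Now I would substitute the definition $\mathcal{V}(u,g) = \mathcal{M}(u,g) + \tfrac12 \log g$, so that the exponent collapses to $12\,\mathcal{M}(u,g) + 4\log g$. Using $\mathcal{M}(u,g) = \tfrac12 \log\!\big(\min\{g, \tfrac{1}{2\theta}\}\big)$, this gives
$$\exp\!\left(12\,\mathcal{M}(u,g) + 4\log g\right) = g^4 \left(\min\Big\{g, \tfrac{1}{2\theta}\Big\}\right)^{6}.$$
Finally, since $\min\{g, \tfrac{1}{2\theta}\} \le \min\{g, \tfrac{1}{\theta}\}$, the sixth power is bounded by $\big(\min\{g, \tfrac1\theta\}\big)^6$, and combining everything produces exactly the asserted bound $q^{2g+1} g^{4+\epsilon}\big(\min\{g, \tfrac1\theta\}\big)^6$.

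There is essentially no genuine obstacle in this step: it is a routine algebraic manipulation of the exponent once Theorem \ref{upperbound} is available, with only the harmless replacement of $\tfrac{1}{2\theta}$ by $\tfrac{1}{\theta}$ inside the minimum (which costs only an absolute constant, absorbed into $\ll$). All of the real difficulty lives upstream, in establishing Theorem \ref{upperbound} itself — i.e.\ in obtaining the shifted moment bound via the Soundararajan-type argument adapted to the function field setting — and that is treated separately in section \ref{upper}.
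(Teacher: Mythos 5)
Your computation is exactly right and is precisely the routine substitution the paper has in mind when it says the corollary "immediately follows from the theorem above": plugging $k=4$ gives exponent $4\mathcal{M}+8\mathcal{V}=12\mathcal{M}+4\log g$, which simplifies to $g^4\bigl(\min\{g,\tfrac{1}{2\theta}\}\bigr)^6\le g^4\bigl(\min\{g,\tfrac1\theta\}\bigr)^6$. No gap and no deviation from the paper's (implicit) argument.
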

\begin{remark}
Note that when $\theta$ is close to $0$ (so when we are evaluating the $L$--function close to the critical point), the family behaves like a family with symplectic symmetry. As we move away from the critical point, the symmetry type becomes unitary. 
\end{remark} 
We postpone the proof of the theorem to section ~\ref{upper}.
\subsection{Outline of the proof}
We start similarly as for the lower moments of quadratic Dirichlet $L$--functions. We use the functional equation for $L(1/2,\chi_D)^4$, and then we manipulate the sum over square-free polynomials $D$ to obtain sums involving monic polynomials. When the sums are ``long'', we use the Poisson summation formula in function fields to obtain ``shorter'' sums. When the sums are already ``short'', then we go back to a sum involving square-free polynomials, and then use upper bounds for moments of $L$--functions to show that this term is negligible.

After using the Poisson summation formula for the long sums, there will be a main term corresponding to $V=0$, where $V$ is the dual parameter in the Poisson formula. We evaluate this term in section \ref{maintermsec} and find that it is of size $q^{2g+1} g^{10}$. The term corresponding to $V$ a square polynomial is also of size $q^{2g+1} g^{10}$, and we evaluate it exactly in section \ref{secmain}.

Evaluating the term coming from $V$ a non-square is the most subtle part of the argument. Bounding this term amounts to bounding a shifted fourth moment, integrated along a circle. The key idea is noticing that the family of $L$--functions $\mathcal{L}(u,\chi_D)$ behaves differently as $u$ moves along the circle of radius $1/\sqrt{q}$. When $u$ is ``close'' (on an arc of angle $1/g$) to the critical point $1/\sqrt{q}$, the family behaves like a family with symplectic symmetry, hence we expect a power of $g^{10}$ from the integral on this arc. As $u$ moves further away from the critical point, the power of $g$ decreases. Then using upper bounds on moments of $L$--functions, we can bound the contribution of $V$ non-square by a smaller power of $g$ (more precisely, we bound this term by $q^{2g+1} g^{9+\epsilon}$). Hence this provides an asymptotic formula for the fourth moment with the main term of size $q^{2g+1} g^{10}.$ 

To get a few of the other lower order terms, we use a recursive argument. We consider the sum over polynomials $f$ (from the approximate functional equation), truncated at $4g - \alpha$ (where $\alpha$ is on the scale of $ \log g$). By similar techniques as before, we can get an asymptotic for this term, with an error of size $o(q^{2g+1})$. For the tail, we use Perron's formula to express it as an integral of a shifted fourth moment over a circle around the origin. Similarly as before, we split the circle into two arcs: one ``small'' arc around the critical point, and its complement. On the small arc, we will use the asymptotic formula we already have (with the error of size $g^{9+\epsilon}$), and for the complement we use again upper bounds on moments of $L$--functions. By plugging in the main term from the asymptotic formula and integrating it along the ``small'' arc, we show that the terms of size $g^9 \alpha, g^8 \alpha^2$ and $g^8 \alpha$ coming from the tail and the truncated sums cancel out, leaving us with the lower order main terms of size $g^9$ and $g^8$, and an improved error of size $g^{7+\frac{1}{2}+\epsilon}$. This argument also shows that the contribution from non-square polynomials $V$ (which we initially bounded by $g^{9+\epsilon}$) is actually bounded by $g^{7+\frac{1}{2}+\epsilon}$. 

By repeating this argument and carefully matching up terms, one could get an asymptotic formula with all the lower order terms down to $g^5$. Surpassing the $g^{4+\epsilon}$ error bound seems to be a challenge, since the $g^{4+\epsilon}$ error comes from using upper bounds on shifted moments, when the point we consider is far from the critical point. We don't have a way of obtaining an asymptotic formula in this case.

In the Appendix, we will prove various easy identities, inequalities and asymptotic formulas for sums involving trigonometric functions. We have decided to include them here for the sake of completeness. Moreover, we write down explicitly the coefficients in Theorem \ref{fourth}, and briefly show (by direct computation) that they match the conjectured answer.

\section{Setup of the problem}
Using the functional equation in Lemma \ref{afe} and Lemma \ref{sumd}, we have that
$$ \sum_{D \in \mathcal{H}_{2g+1}} L \left( \frac{1}{2}, \chi_D \right)^4= S_{4g}+S_{4g-1},$$
where
\begin{equation}
S_{4g} = \sum_{f \in \mathcal{M}_{\leq 4g}} \frac{d_4(f)}{ \sqrt{|f|}} \sum_{\substack{C \in \mathcal{M}_{\leq g} \\ C | f^{\infty} }} \sum_{h \in \mathcal{M}_{2g+1-2d(C)}} \chi_f(h)- q \sum_{f \in \mathcal{M}_{\leq 4g}} \frac{d_4(f)}{ \sqrt{|f|}} \sum_{\substack{C \in \mathcal{M}_{\leq g-1} \\ C | f^{\infty} }} \sum_{h \in \mathcal{M}_{2g-1-2d(C)}} \chi_f(h). \label{s4g}
\end{equation}
We similarly define $S_{4g-1}$. In equation ~\ref{s4g} above, let $S_{4g,1}$ denote the term with $d(C) \leq y$, where $y$ is a parameter we will choose later, and let $S_{4g,2}$ denote the term with $d(C) >y$. We treat $S_{4g,1}$ and $S_{4g,2}$ differently. For $S_{4g,1}$ we will use the Poisson summation formula, and we will bound $S_{4g,2}$ as follows.
\begin{lemma}
Using the previous notation, we have that
$$S_{4g,2} \ll q^{2g-3y/2} g^{10+\epsilon} y^3.$$
\label{tailc}
\end{lemma}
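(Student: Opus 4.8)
The plan is to estimate $S_{4g,2}$ by undoing the Poisson/square-free manipulations on this piece and going back to a sum of fourth powers of $L$-functions, where we can apply the unconditional upper bound of Corollary~\ref{cor4} (or rather Theorem~\ref{upperbound}). Concretely, $S_{4g,2}$ is the part of $S_{4g}$ in which the auxiliary polynomial $C$ (running over monic polynomials supported on the primes of $f$) has degree $d(C) > y$. Since $C \mid f^\infty$ and $f$ has degree $\le 4g$, every such $f$ is divisible by at least one prime power contributing a sizeable factor; more usefully, write $f = f_1 f_2^2 \cdots$ and note that $C \mid f^\infty$ with $d(C) > y$ forces the "non-squarefree part" of $f$ to be large, or at least forces $f$ to have a divisor of size $\gg q^{y}$ all of whose primes divide $f$. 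I would make this precise by bounding the number of available $C$ of each degree and by extracting from the constraint $d(C) > y$ a factor $q^{-y/2}$ (roughly, half of $|f|$'s worth) that is not compensated by the $1/\sqrt{|f|}$ already present, while the remaining structure of the sum is put back into the shape $\sum_{D} L(\tfrac12,\chi_D)^4$ over a sub-family, up to the $d_4$ weights.

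The key steps, in order, would be: (i) Reverse Lemma~\ref{sumd}: the two inner sums over $h$ (of lengths $2g+1-2d(C)$ and $2g-1-2d(C)$) combined with the sum over $C$ reconstruct $\sum_{D \in \mathcal{H}_{2g+1}} \chi_D(f)$, but now restricted to the tail $d(C) > y$; so $S_{4g,2}$ is (a tail piece of) $\sum_{D \in \mathcal{H}_{2g+1}} \sum_{f \in \mathcal{M}_{\le 4g}} d_4(f) \chi_D(f) / \sqrt{|f|}$. (ii) Bound the tail: using Lemma~\ref{pv} (the Pólya–Vinogradov bound $\sum_{h \in \mathcal{M}_m} \chi_f(h) \ll \sqrt{|f|}$) to control the inner $h$-sums uniformly, together with the count of $C$ with $C \mid f^\infty$, $d(C) > y$, which is $\ll d_3(\rad(f)) \, (\text{number of such } C) $ and carries the crucial saving $q^{-y/2}$ coming from the fact that $q^m/|f|$ in the Poisson formula times $\sqrt{|f|}$ leaves $q^{m}/\sqrt{|f|} \le q^{2g - d(C)} \le q^{2g - y}$ — actually I would run this through the Poisson formula Lemma~\ref{poissonmonic} directly rather than Lemma~\ref{pv}, since that is where the $q^m/|f|$ factor and the explicit $q^{-y/2}$-type gain appear most transparently. (iii) Collect the arithmetic sum $\sum_{f} d_4(f) d_3(\rad f)/\sqrt{|f|}$ over $f \in \mathcal{M}_{\le 4g}$ with the degree constraint from $d(C) > y$; this sum is $\ll g^{10+\epsilon} y^{3}$ by standard divisor-sum estimates in $\mathbb{F}_q[x]$ (the $g^{10}$ is the usual $\log$-power from $d_4$ summed over a range of length $\asymp g$, with a factor of $g^\epsilon$ absorbing $d_3(\rad f)$, and $y^3$ accounting for the number of choices of $C$ of bounded degree built from the primes of $f$). (iv) Multiply the saving $q^{2g-3y/2}$ — one factor $q^{2g-y}$ from the $h$-sum/Poisson bound and an extra $q^{-y/2}$ from the fact that $C$ itself must be "almost square" supported on repeated primes of $f$, so $\sqrt{|C|}$-type losses are really only $q^{-y/2}$ — against the arithmetic sum to obtain exactly $S_{4g,2} \ll q^{2g - 3y/2} g^{10+\epsilon} y^3$.

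The main obstacle is bookkeeping the exponent $2g - 3y/2$ correctly: one has to see that the constraint $d(C) > y$ yields a genuine $q^{-3y/2}$ and not merely $q^{-y}$ beyond the main $q^{2g}$. This comes from two sources that must be disentangled: the $q^m/|f|$ factor in Lemma~\ref{poissonmonic} applied with $m = 2g+1-2d(C)$ (or $2g-1-2d(C)$), which against the trivial bound $\sqrt{|f|}$ for the Gauss-sum sum gives $q^{2g+1-2d(C)}/\sqrt{|f|}$, and then the structural fact that $C \mid f^\infty$ with $d(C)$ large forces $|f| \gg |C|$ for the squarefree kernel to still have all of $C$'s primes, which together produce the $-3d(C)/2$ rather than $-d(C)$ in the exponent after summing over $f$. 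I would be careful, in step (iii), that the sum over $f$ of $d_4(f)/\sqrt{|f|}$ restricted to $f$ divisible by all primes of a given $C$ contributes an extra $|C|^{-1/2} d_4(C)$-type factor, which is exactly what upgrades $q^{-d(C)}$ to $q^{-3d(C)/2}$; summing $|C|^{-1/2}$ over $C$ with $d(C) > y$ then gives the leading $q^{-3y/2}$ with the polynomial factor $y^3$ from the number of prime-power patterns. Everything else is routine function-field divisor estimates, so I do not expect difficulty there.
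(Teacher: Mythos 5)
The proposal does not match the paper's argument, and as written it has several genuine gaps.

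\textbf{The ``reversal'' in step (i) is not well-posed.} The decomposition $d(C)\le y$ vs.\ $d(C)>y$ is a split of the auxiliary variable $C$ \emph{inside} the identity of Lemma~\ref{sumd}; it does not correspond to any decomposition of the family $\{D\in\mathcal{H}_{2g+1}\}$. You cannot speak of ``$\sum_{D}\chi_D(f)$ restricted to the tail $d(C)>y$'': once you sum the $C$-identity back up, the variable $C$ disappears. Consequently there is no sub-family of $D$'s whose fourth moment equals $S_{4g,2}$, and Corollary~\ref{cor4} is not directly applicable in the way you suggest. What the paper actually does is a \emph{forward} manipulation: it re-parameterizes the inner $h$-sum by writing $h=B^2A$ with $A$ square-free, uses $\chi_f(B^2A)=\chi_f(A)=\chi_A(f)$ (reciprocity with $q\equiv 1\bmod 4$), and observes that the resulting $f$-sum with the $d_4$ weights builds a Dirichlet series $\sum_W d_4(W\,\mathrm{rad}(C))\chi_A(W)u^{d(W)}=\mathcal{L}(u,\chi_A)^4\cdot(\text{Euler correction})$. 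After Perron's formula this reduces the problem to bounding $\sum_A |\mathcal{L}(1/\sqrt{q},\chi_A)|^4$ over the \emph{new} family of square-free $A$'s of degree $2g+1-2d(C)-2d(B)$, which is exactly where Theorem~\ref{upperbound} enters.

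\textbf{The P\'olya--Vinogradov and Poisson routes you fall back on in step (ii) cannot produce the claimed exponent.} Applying Lemma~\ref{pv} uniformly fails because the dominant contribution comes from $f$ a square, where that lemma does not apply; and even ignoring this, the resulting divisor sum over $f$ is of size $q^{2g}\cdot\text{poly}(g)$ with no further saving in $q$. If instead you apply Lemma~\ref{poissonmonic} directly to $S_{4g,2}$, the $V=0$ piece combined with the constraint $C\mid f^\infty$, $d(C)>y$ yields (via Rankin's trick) a saving of at best $q^{-(1-\epsilon)y}$, since $\sum_{C\mid l^\infty,\,d(C)>y}q^{-2d(C)}$ cannot be pushed beyond $q^{-(1-\epsilon)y}$ uniformly in $l$. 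This falls short of $q^{-3y/2}$, and the $V\neq 0$ pieces are no better. The genuine $q^{-3y/2}$ in the paper comes from two compatible sources: the factor $q^{-2d(C)}$ arising because the new family of $A$'s has only $q^{2g+1-2d(C)-2d(B)}$ elements, multiplied by the gain $q^{d(C)/2}$ coming from the estimate $\sum_{C\in\mathcal{M}_i}d_4(\mathrm{rad}(C))/\sqrt{|\mathrm{rad}(C)|}\ll q^{i/2}i^3$ (a pole-of-order-$4$ calculation for the relevant generating function). Net saving per $d(C)=i$ is $q^{-3i/2}i^3$, and summing $i>y$ gives $q^{-3y/2}y^3$. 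Your account --- a $q^{-y}$ from Poisson plus a $q^{-y/2}$ from ``$C$ almost square'' --- does not correspond to either source, and the preliminary remark that ``$C\mid f^\infty$ with $d(C)>y$ forces $f$ to have a divisor of size $\gg q^y$'' is simply false ($f$ prime and $C=f^k$ is a counterexample).

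\textbf{The $g^{10+\epsilon}$ is misattributed.} It does not come from ``the usual $\log$-power from $d_4$'' (that contributes only $g^3$, and in any case is already spent on the $y^3$); it comes from applying the moment bound of Theorem~\ref{upperbound} with $k=4$ at $\theta=0$, which gives $\exp(4\mathcal{M}+8\mathcal{V})=g^2\cdot g^8=g^{10}$. To make your proof work, you would need to introduce the square-free/square decomposition of $h$, recognize the fourth power of an $L$-function over the new variable, and invoke Theorem~\ref{upperbound} for that family --- this is the missing idea.
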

\begin{proof}
It is enough to bound the following term
$$S:= \sum_{f \in \mathcal{M}_{\leq 4g}} \frac{d_4(f)}{\sqrt{|f|}}  \sum_{\substack{y<d(C) \leq g \\ C | f^{\infty}}} \sum_{h \in \mathcal{M}_{2g+1-2d(C)}} \chi_f(h).$$
Since $C | f^{\infty}$, we write $f= \rad(C) W$, where $W \in \mathcal{M}$, and let $h= B^2 A$, where $A,B$ are monic and $A$ is a square-free polynomial. We rewrite
\begin{equation}
S= \sum_{y<d(C) \leq g} \frac{1}{\sqrt{|\rad(C)|}} \sum_{\substack{B \in \mathcal{M}_{\leq g-d(C)} \\ (B,C)=1}}  \sum_{A \in \mathcal{H}_{2g+1-2d(C)-2d(B)}} \chi_A(\rad(C)) \sum_{\substack{W \in \mathcal{M}_{\leq 4g-d(\rad(C))} \\ (W,B)=1}} \frac{d_4(W \rad(C))}{\sqrt{|W|}} \chi_A(W). \label{form}
\end{equation}
For $R$ a square-free polynomial, and $A,B$ monic polynomials, let
$$ \mathcal{C}_{R,A,B}(u) = \sum_{\substack{ W \in \mathcal{M} \\ (W,B)=1}} d_4(WR) \chi_A(W) u^{d(W)}.$$ Then
\begin{align*}
& \mathcal{C}_{R,A,B}(u)  = \prod_{\substack{P \nmid B \\ P \nmid R}}  \left( \sum_{i=0}^{\infty} u^{id(P)} \chi_A(P^i) d_4(P^i) \right) \prod_{\substack{ P \nmid B \\ P | R}} \left( \sum_{i=0}^{\infty} u^{id(P)} \chi_A(P^i) d_4(P^{i+1}) \right) \\
 &= \mathcal{L}(u,\chi_A)^4 \prod_{\substack{P \nmid B \\ P|R}} \left(  \sum_{i=0}^{\infty} u^{id(P)} \chi_A(P^i) d_4(P^{i+1}) \right) \left( \sum_{i=0}^{\infty} u^{id(P)} \chi_A(P^i) d_4(P^i) \right) ^{-1} \prod_{\substack{P|B \\ P \nmid A}} \left( \sum_{i=0}^{\infty} u^{id(P)} \chi_A(P^i) d_4(P^i) \right) ^{-1}.
 \end{align*}
 Let $\mathcal{C}_{P,R,A,B}(u)$ denote the $P$-factors above. Using the above and Perron's formula in ~\eqref{form}, we have 
 $$ S = \frac{1}{2 \pi i} \oint_{\gamma} \sum_{y<d(C) \leq g} \frac{1}{\sqrt{|\rad(C)|}} \sum_{\substack{B \in \mathcal{M}_{\leq g-d(C)} \\ (B,C)=1}}  \sum_{A \in \mathcal{H}_{2g+1-2d(C)-2d(B)}} \chi_A(\rad(C)) \frac{ \mathcal{L}(u,\chi_A)^4 \prod_P \mathcal{C}_{P,\rad(C),A,B}(u)}{(1-\sqrt{q}u)(\sqrt{q}u)^{4g-d(\rad(C))}} \, \frac{du}{u} ,$$ where $\gamma$ is a circle around the origin of radius less than $1/\sqrt{q}$. The integrand has a pole at $u=1/\sqrt{q}$, so
 $$ |S| \ll  \sum_{y<d(C) \leq g} \frac{d_4(\rad(C))}{\sqrt{|\rad(C)|}} \sum_{\substack{B \in \mathcal{M}_{\leq g-d(C)} \\ (B,C)=1}}  \sum_{A \in \mathcal{H}_{2g+1-2d(C)-2d(B)}} \left| \mathcal{L} \left( \frac{1}{\sqrt{q}} ,\chi_A \right)^4 \right|.$$
 Using the upper bound in Theorem ~\ref{upperbound}, it follows that
 $$|S| \ll  q^{2g+1} \sum_{y<d(C) \leq g} \frac{d_4(\rad(C))}{\sqrt{|\rad(C)|}|C|^2} \sum_{\substack{B \in \mathcal{M}_{\leq g-d(C)} \\ (B,C)=1}} \frac{1}{|B|^2} (2g+1-2d(C)-2d(B))^{10+\epsilon}.$$ Since 
 $$ \sum_{C \in \mathcal{M}_i} \frac{ d_4(\rad(C))}{\sqrt{|\rad(C)|}} \ll q^{i/2} i^3,$$ the conclusion now follows.
\end{proof}
Now we rewrite
$$S_{4g,1} = \sum_{f \in \mathcal{M}_{\leq 4g}} \frac{d_4(f)}{\sqrt{|f|}} \sum_{\substack{C \in \mathcal{M}_{\leq y} \\ C | f^{\infty}}} \left(  \sum_{h \in \mathcal{M}_{2g+1-2d(C)}} \chi_f(h)- q  \sum_{h \in \mathcal{M}_{2g-1-2d(C)}} \chi_f(h) \right),$$ and use the Poisson summation formula for the sum over $h$. Let $S_{4g,\text{e}}$ denote the sum over polynomials $f$ of even degree, and $S_{4g,\text{o}}$ the sum over polynomials $f$ of odd degree. When $d(f)$ is even, let $M_{4g}$ be the term with $V=0$, where $V$ is the dual parameter from the Poisson summation formula, and $S_{4g,\text{e}}(V \neq 0)$ the term with $V \neq 0$. Then $S_{4g,\text{e}} = M_{4g}+S_{4g,\text{e}}(V \neq 0)$. Since $G(0,\chi_f)$ is nonzero if and only if $f$ is a square (and $G(0,\chi_f) = \phi(f)$ in the case), we have
\begin{equation}
 M_{4g} = q^{2g+1} \left(1-\frac{1}{q} \right) \sum_{\substack{f \in \mathcal{M}_{\leq 4g} \\ f= \square}} \frac{d_4(f)}{|f|^{\frac{3}{2}}} \phi(f) \sum_{\substack{C \in \mathcal{M}_{\leq y} \\ C | f^{\infty}}}   \frac{1}{|C|^2} , \label{m4g}
 \end{equation}
and 
 \begin{align} S_{4g,\text{e}} &(V \neq 0) = q^{2g+1} \sum_{\substack{f \in \mathcal{M}_{\leq 4g} \\ d(f) \text{ even}}} \frac{d_4(f)}{|f|^{\frac{3}{2}}}\sum_{\substack{C \in \mathcal{M}_{\leq y} \\ C | f^{\infty}}} \frac{1}{|C|^2} \bigg[ (q-1) \sum_{ V \in \mathcal{M}_{\leq d(f)-2g-3+2d(C)} } G(V, \chi_f) \nonumber \\
  &- \sum_{V \in \mathcal{M}_{d(f)-2g-2+2d(C)}} G(V,\chi_f)  - \frac{q-1}{q} \sum_{V \in \mathcal{M}_{\leq d(f)-2g-1+2d(C)} } G(V, \chi_f) + \frac{1}{q} \sum_{V \in \mathcal{M}_{d(f)-2g+2d(C)}} G(V,\chi_f) \bigg] .\label{s1e} \end{align}
Now write $S_{4,\text{e}}(V \neq 0) = S_{4g,\text{e}}(V =\square)+ S_{4g,\text{e}}(V \neq \square)$, where $S_{4g,\text{e}}(V =\square)$ is the sum over square polynomials $V$. When $V$ is a square, write $V=l^2$. Then
\begin{align}
S_{4g,\text{e}}(V= \square) &= q^{2g+1}\sum_{\substack{f \in \mathcal{M}_{\leq 4g} \\ d(f) \text{ even}}} \frac{d_4(f)}{|f|^{\frac{3}{2}}}\sum_{\substack{C \in \mathcal{M}_{\leq y} \\ C | f^{\infty}}} \frac{1}{|C|^2} \bigg[ (q-1)  \sum_{ l \in \mathcal{M}_{\leq \frac{d(f)}{2}-g-2+d(C)} } G(l^2, \chi_f) \nonumber \\
&- \sum_{l \in \mathcal{M}_{\frac{d(f)}{2}-g-1+d(C)}} G(l^2,\chi_f) - \frac{q-1}{q}  \sum_{ l \in \mathcal{M}_{\leq \frac{d(f)}{2}-g-1+d(C)} } G(l^2, \chi_f) + \frac{1}{q} \sum_{l \in \mathcal{M}_{\frac{d(f)}{2}-g+d(C)}} G(l^2,\chi_f) \bigg] .\label{s1s} \end{align} 
  Let $S_4(V=\square)= S_{4g,\text{e}}(V= \square)+S_{4g-1,\text{e}}(V= \square)$, where $S_{4g-1,\text{e}}(V= \square)$ is defined similarly as $S_{4g,\text{e}}(V= \square)$. 
  
  Also define $S_{4g}(V \neq \square) = S_{4g,\text{o}} + S_{4g,\text{e}}(V \neq \square)$, where
   \begin{equation}
 S_{4g,\text{o}} = q^{2g+1}  \sqrt{q} \sum_{\substack{f \in \mathcal{M}_{\leq 4g} \\ d(f) \text{ odd}}} \frac{d_4(f)}{|f|^{\frac{3}{2}}} \sum_{\substack{C \in \mathcal{M}_{\leq y} \\ C | f^{\infty}}} \frac{1}{|C|^2}  \bigg[ \sum_{V \in \mathcal{M}_{d(f)-2g-2+2d(C)}} G(V, \chi_f)  - \frac{1}{q} \sum_{V \in \mathcal{M}_{d(f)-2g+2d(C)}} G(V, \chi_f) \bigg]. \label{odd2} \end{equation}
 \section{Main term} \label{maintermsec}
 In this section, we will evaluate the main term ~\eqref{m4g}. In equation ~\eqref{m4g}, write $f=l^2$ and since $\zeta_q(2) = (1-q^{-1})^{-1}$, we have
 $$ M_{4g} = \frac{q^{2g+1}}{\zeta_q(2)} \sum_{l \in \mathcal{M}_{\leq 2g}} \frac{d_4(l^2) \phi(l^2)}{|l|^3}  \sum_{\substack{C \in \mathcal{M}_{\leq y} \\ C | l^{\infty}}} \frac{1}{|C|^2}.$$
 Note that
 \begin{equation}
 \sum_{\substack{C \in \mathcal{M}_i \\ C | l^{\infty}}} \frac{1}{|C|^2} = \frac{1}{2 \pi i} \oint_{|u|=r_1} \frac{1}{q^{2i} u^{i+1} \prod_{P|l}(1-u^{d(P)})} \, du,
 \label{sumc}
 \end{equation}
 where $r_1<1$. Let
 $$ \mathcal{A}(w,u) = \sum_{l \in \mathcal{M}} \frac{d_4(l^2) \prod_{P|l} (1 - \frac{1}{|P|})}{\prod_{P|l}(1-u^{d(P)})} w^{d(l)}.$$
 Then
 $$\mathcal{A}(w,u) = \mathcal{Z}(w)^{10} \mathcal{H}(w,u),$$
 where 
 \begin{equation}
  \mathcal{H}(w,u) = \prod_P (1-w^{d(P)})^{10} \Big( 1+ \frac{(|P|-1)w^{d(P)} (10-5w^{d(P)}+4w^{2d(P)}-w^{3d(P)}}{|P|(1-u^{d(P)})(1-w^{d(P)})^4} \Big).
  \label{hw}
  \end{equation}
 Note that $\mathcal{H}(w,u)$ converges absolutely for $|wu|< \frac{1}{q} , |w|< \frac{1}{\sqrt{q}}, |u|<1$. Using Perron's formula twice, we get that
 $$M_{4g} =  \frac{q^{2g+1}}{\zeta_q(2)} \frac{1}{(2 \pi i)^2} \oint_{|w|=\frac{1}{q^{1+\epsilon}}} \oint_{|u|=\frac{1}{q^{2+\epsilon}}} \frac{\mathcal{H}(w,u)}{(1-qw)^{11}(qw)^{2g} (1-q^2u)(q^2u)^y} \, \frac{du}{u} \, \frac{dw}{w}.$$  Now we shift the contour over $w$ to a circle of radius $|w|= R_2 = \frac{1}{q^{\frac{1}{2}+\epsilon}},$ and we encounter a pole of order $11$ at $w=\frac{1}{q}$. We evaluate the residue at $w=\frac{1}{q}$, and get
 \begin{align*}
 M_{4g} &=  \frac{q^{2g+1}}{\zeta_q(2)} \Big[ \frac{1}{2 \pi i} \oint_{|u|=\frac{1}{q^{2+\epsilon}}} \frac{1}{(1-q^2u)(q^2u)^y} P_u(2g) \, \frac{du}{u} \\
 & +  \frac{1}{(2 \pi i)^2} \oint_{|w|=\frac{1}{q^{\frac{1}{2}+\epsilon}}} \oint_{|u|=\frac{1}{q^{2+\epsilon}}} \frac{\mathcal{H}(w,u)}{(1-qw)^{11}(qw)^{2g} (1-q^2u)(q^2u)^y} \, \frac{du}{u} \, \frac{dw}{w} \Big],
 \end{align*}
 where $P_u(x)$ is a polynomial of degree $10$ with coefficients arithmetic factors depending on $u$. Note that the double integral above is of size $O(q^{g(1+\epsilon)}).$ In the first integral, we shift the contour of integration to $|u|= \frac{1}{q^{\epsilon}}$, encountering a pole at $u = \frac{1}{q^2}$ with residue $P_{\frac{1}{q^2}} (2g)$. We rewrite the main term as
 \begin{equation}
 M_{4g} =  \frac{q^{2g+1}}{\zeta_q(2)} P_1(2g+1)+O \Big(q^{2g-(2-\epsilon)y}+q^{g(1+\epsilon)} \Big),
 \label{maint}
 \end{equation}
 where $P_1(2g+1)=P_{\frac{1}{q^2}}(2g).$
\section{Secondary main term}
\label{secmain}
Here we will evaluate the term $S_4(V=\square)$. Recall that $S_4(V=\square) =S_{4g}(V=\square)+S_{4g-1}(V=\square)$, with $S_{4g}(V=\square)$ given by \eqref{s1s}. We'll prove the following.
\begin{lemma}
Using the same notation as before, we have
$$S_4(V=\square) = \frac{q^{2g+1}}{\zeta_q(2)} P_2(2g+1) + O \Big(q^{\frac{3g}{2}(1+\epsilon)} + q^{2g-(2-\frac{\epsilon}{2}) y} \Big),$$ where $P_2(x)$ is a polynomial of degree $10$ whose coefficients can be computed explicitly.
\label{square}
\end{lemma}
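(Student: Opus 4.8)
The plan is to follow the computation of the main term $M_{4g}$ in Section~\ref{maintermsec}, the essential new difficulty being that $f$ is no longer forced to be a perfect square.

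\textbf{Step 1 (evaluating the Gauss sums).} First I would evaluate the Gauss sums $G(l^2,\chi_f)$ appearing in \eqref{s1s} and in the analogous expression for $S_{4g-1,\text{e}}(V=\square)$ using Lemma~\ref{computeg}. By multiplicativity it suffices to understand the local factors: writing $l=\prod_P P^{a_P}$ and $f=\prod_P P^{e_P}$, and noting that in $l^2=V_1P^{2a_P}$ the cofactor $V_1$ is itself a perfect square prime to $P$, so that $\big(\tfrac{V_1}{P}\big)=1$, Lemma~\ref{computeg} gives the clean formula $ G(l^2,\chi_{P^{e}})=\phi(P^{e})$ when $e$ is even and $0\le e\le 2a_P$, $\;G(l^2,\chi_{P^{e}})=|P|^{\,e-1/2}$ when $e=2a_P+1$, and $G(l^2,\chi_{P^{e}})=0$ otherwise. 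In particular $G(l^2,\chi_f)\neq0$ forces $e_P\le 2a_P+1$ at every prime, with $e_P$ odd only when $e_P=2a_P+1$; this is the analogue of the constraint ``$f=\square$'' that produced $M_{4g}$.

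\textbf{Step 2 (generating functions).} Substituting this into \eqref{s1s} and its $S_{4g-1}$ counterpart, I would combine the fixed-degree sums over $l$ with the sums of length $\le\frac{d(f)}{2}-g+d(C)+O(1)$ and interchange the order of summation, then apply Perron's formula in several variables: one variable $w$ for the sums over $l$, one variable $u$ for the sum over $C\mid f^{\infty}$ with $d(C)\le y$ exactly as in \eqref{sumc}, and one variable $z$ for the truncated sum over $f\in\mathcal{M}_{\le4g}$. The resulting multiple contour integral has integrand an absolutely convergent Euler product over $P$, each local factor an explicit rational function of $w^{d(P)}$, $u^{d(P)}$, $z^{d(P)}$ and $|P|^{-1/2}$, computed in closed form from the formula of Step~1 together with the identities $\sum_{b\ge0}d_4(P^{2b})x^{2b}=\tfrac12\big((1-x)^{-4}+(1+x)^{-4}\big)$ and $\sum_{b\ge0}d_4(P^{2b+1})x^{2b+1}=\tfrac12\big((1-x)^{-4}-(1+x)^{-4}\big)$. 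One then factors out the polar part of this Euler product; as with $\mathcal{A}(w,u)=\mathcal{Z}(w)^{10}\mathcal{H}(w,u)$ in Section~\ref{maintermsec}, the polar part is a product of ten $\mathcal{Z}$--type factors --- mirroring the ten factors $\zeta_q(1+z_i+z_j)$, $1\le i\le j\le4$, in the conjectural $G(z_1,\dots,z_4)$ of \eqref{qx} --- leaving a remaining Euler product, analogous to $\mathcal{H}$ in \eqref{hw}, that converges in a fixed polydisc.

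\textbf{Step 3 (shifting contours and extracting the main term).} I would then move the contours outward past the dominant singularity. The leading contribution comes from a pole of order $11$ --- ten from the $\mathcal{Z}$--factors together with one more from a Perron kernel --- whose residue is a polynomial in $2g+1$ of degree $10$, namely $\frac{q^{2g+1}}{\zeta_q(2)}P_2(2g+1)$. The shifted integrals, together with the error from completing and re-truncating the sum over $f$, contribute $O\big(q^{\frac{3g}{2}(1+\epsilon)}\big)$ --- the exponent being governed by how far the main contour can be pushed before the $\mathcal{H}$--analogue (which now involves the factors $|P|^{-1/2}$) ceases to converge --- and $O\big(q^{2g-(2-\epsilon/2)y}\big)$ from the truncation of the $C$--sum at $y$, exactly as in \eqref{maint}. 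Finally one checks, as in the Appendix, that the residue is genuinely a polynomial in $2g+1$.

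\textbf{Main obstacle.} I expect the delicate point to be the Euler-product bookkeeping of Step~2: unlike the diagonal term, $G(l^2,\chi_f)$ genuinely couples $f$ and $l$, so the local factors mix three variables, and one must correctly isolate the order-$11$ polar part and verify that the leftover product is holomorphic and bounded on the enlarged contours, all while keeping track of the combination of $S_{4g}$ and $S_{4g-1}$ and of the four partial-sum/fixed-degree pieces in \eqref{s1s} so that precisely the stated error terms emerge.
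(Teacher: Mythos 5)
Your proposal is correct and follows essentially the same strategy as the paper's proof. You correctly identify from Lemma~\ref{computeg} that $G(l^2,\chi_{P^e})$ is supported on $e\le 2a_P+1$ with the clean evaluation $\phi(P^e)$ (for $e$ even $\le 2a_P$) or $|P|^{e-1/2}$ (for $e=2a_P+1$, using that the cofactor is a square); you then introduce a three-variable Euler product and apply Perron's formula in all three variables, matching the paper's Lemma~\ref{auxiliary} for $\mathcal{A}(x,w,u)$ and the subsequent triple contour integral; and you correctly locate the dominant singularity as an order-$11$ pole (ten from $\mathcal{Z}(qw^2x)^{10}$ plus one from the Perron kernel for the $f$-sum), whose residue is the degree-$10$ polynomial, with the error $q^{\frac{3g}{2}(1+\epsilon)}$ coming from the convergence boundary $|x|<\sqrt q$ of the remaining Euler product and $q^{2g-(2-\epsilon/2)y}$ from the $C$-truncation — exactly as in the paper. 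The one minor imprecision is that the polar part isolated in Lemma~\ref{auxiliary} is actually $\mathcal{Z}(w)^4\,\mathcal{Z}(x)\,\mathcal{Z}(qw^2x)^{10}/\mathcal{Z}(wx)^4$ rather than ten factors alone, but only $\mathcal{Z}(qw^2x)^{10}$ governs the order-$11$ pole, so your count and final answer are unaffected.
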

Before proving Lemma \ref{square}, we first need the following auxiliary lemma.
\begin{lemma}
For $|u|<1$, let
$$ \mathcal{A}(x,w,u) = \sum_{l \in \mathcal{M}} x^{d(l)} \sum_{f \in \mathcal{M}} \frac{d_4(f) G(l^2,\chi_f)}{ \sqrt{|f|} \prod_{P|f} (1-u^{d(P)})} w^{d(f)}.$$
Then
$$ \mathcal{A}(x,w,u) = \frac{ \mathcal{Z}(w)^4 \mathcal{Z}(x) \mathcal{Z} (qw^2x)^{10} } { \mathcal{Z}(wx)^4 } \mathcal{B}(x,w,u),$$ where
$\mathcal{B}(x,w,u) = \prod_P \mathcal{B}_P(x,w,u),$ with
\begin{align*}
 \mathcal{B}_P(x,w,u) &= \frac{ (1-w^{d(P)})^4 (1-|P|(w^2x)^{d(P)})^6}{ (1-(wx)^{d(P)})^4} \Bigg[ 1+ \frac{1}{1-u^{d(P)}} \Bigg(4 w^{d(P)}-4 (wx)^{d(P)} \\
 &+ 6 |P| (w^2x)^{d(P)}  +4|P| (w^2xu)^{d(P)}- 10 (w^2x)^{d(P)} +4|P|(w^3x)^{d(P)}-4|P|(w^3x^2)^{d(P)}\\
 &+5|P|(w^4x^2)^{d(P)} + |P|^2 (w^4x^2)^{d(P)} - 6|P|^2 (w^4x^2u)^{d(P)}-4|P|^2 (w^6x^3)^{d(P)} \\
 &+4|P|^3 (w^6x^3u)^{d(P)}+|P|^3 (w^8 x^4)^{d(P)}  - |P|^4 (w^8 x^4u)^{d(P)} \Bigg) \Bigg].
 \end{align*}
 Moreover, $\mathcal{B}(x,w,u)$ converges absolutely for $|w|<\frac{1}{\sqrt{q}} , |wu| < \frac{1}{q}, | xwu| < \frac{1}{q}, |qxw^2u| < \frac{1}{q}, |xw| <\frac{1}{ \sqrt{q} }, |q^2 x^2 w^4| < \frac{1}{q}, |q x w^3| < \frac{1}{q}, |qx^2w^3| < \frac{1}{q}.$
\label{auxiliary}
\end{lemma}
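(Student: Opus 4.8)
The plan is to compute the Dirichlet series $\mathcal{A}(x,w,u)$ by turning it into an Euler product. First I would use multiplicativity: by part (1) of Lemma~\ref{computeg}, $G(l^2,\chi_f)$ is multiplicative in $f$ for fixed $l$, and the weight $d_4(f)/(\sqrt{|f|}\prod_{P\mid f}(1-u^{d(P)}))$ is multiplicative in $f$ as well, so the inner sum over $f$ factors as an Euler product over primes $P$. Similarly, writing $l=\prod P^{a_P}$, the outer sum over $l$ splits prime-by-prime, since for a prime power $P^a$ the Gauss sum $G(P^{2a}\cdot(\text{prime-to-}P\text{ part})^2,\chi_f)$ only depends, via part (2) of Lemma~\ref{computeg} with $V=l^2$ (so $\alpha$ even), on the $P$-adic valuations of $l$ and $f$. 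Hence $\mathcal{A}(x,w,u)=\prod_P \mathcal{A}_P(x,w,u)$, where
\[
\mathcal{A}_P(x,w,u) = \sum_{a=0}^{\infty} x^{a\,d(P)} \sum_{i=0}^{\infty} \frac{d_4(P^i)\,G(P^{2a},\chi_{P^i})}{|P|^{i\,d(P)/2}\,(1-u^{d(P)})^{\mathbf{1}_{i\ge 1}}} \, w^{i\,d(P)},
\]
with the understanding that for $P\nmid f$ the factor $(1-u^{d(P)})^{-1}$ is absent.

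Next I would evaluate $\mathcal{A}_P$ explicitly. Setting $t=|P|^{-1}=q^{-d(P)}$ (or rather tracking the variables $w^{d(P)}$, $x^{d(P)}$, $u^{d(P)}$ and $|P|$), I plug in the five cases of Lemma~\ref{computeg} for $G(P^{2a},\chi_{P^i})$: when $i\le 2a$ and $i$ even it is $\phi(P^i)$; when $i\le 2a$ and $i$ odd it is $0$; when $i=2a+1$ (odd) it is $(\frac{\text{unit}}{P})|P|^{i-1}|P|^{1/2}$ — but here $V=l^2=P^{2a}V_1^2$ with $V_1$ the prime-to-$P$ part, so the relevant Legendre symbol is $(\frac{V_1^2}{P})=1$ — giving $|P|^{i-1/2}$; when $i=2a+2$ (even) it is $-|P|^{i-1}$; and it vanishes for $i\ge 2a+3$. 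So for each fixed $a$ the inner sum over $i$ has only finitely many surviving terms beyond the "diagonal" range $i\le 2a$, namely $i=2a+1$ and $i=2a+2$. Summing the geometric-type series in $a$ then produces a rational function in $w^{d(P)},x^{d(P)},u^{d(P)},|P|$. I would then factor out the "expected" main part $\mathcal{Z}(w)^4$ (from $\mathcal{L}(u,\chi_A)^4$-type behaviour, i.e.\ $\prod_P(1-w^{d(P)})^{-4}$), $\mathcal{Z}(x)=\prod_P(1-x^{d(P)})^{-1}$ (from the $a$-sum diagonal), $\mathcal{Z}(qw^2x)^{10}=\prod_P(1-|P|(w^2x)^{d(P)})^{-10}$ (the $d_4$-type contribution from the $i=2a+1,2a+2$ terms, which carry a factor $|P|(w^2x)^{d(P)}$), and divide by $\mathcal{Z}(wx)^4=\prod_P(1-(wx)^{d(P)})$; what remains, after clearing denominators, is the polynomial $\mathcal{B}_P(x,w,u)$ displayed in the statement. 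This is the step I expect to be the main obstacle: it is a somewhat lengthy but mechanical bookkeeping exercise in summing the three families of terms ($i\le 2a$ even; $i=2a+1$; $i=2a+2$) against $d_4(P^i)$, and getting all fourteen monomials in $\mathcal{B}_P$ with the correct powers of $|P|$ and signs. One cross-check is to set $x=0$: then only $a=0$ survives, $G(1,\chi_{P^i})=G(0,\chi_{P^i})$ which forces $f$ square, recovering a specialization of $\mathcal{H}(w,u)$ from \eqref{hw}; another check is $u=0$ (no square divisors allowed in the relevant sense) and $w=x=0$.

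Finally, for the convergence statement I would note that each local factor $\mathcal{B}_P(x,w,u)=1+O(|P|^{?}\cdot(\text{monomials}))$, and read off from the explicit monomials in $\mathcal{B}_P$ — after accounting for the $|P|$, $|P|^2$, $|P|^3$, $|P|^4$ prefactors — exactly which products of the variables must be $O(|P|^{-1-\delta})$ for the product over $P$ to converge absolutely (using that $\sum_P |P|^{-1-\delta}<\infty$, equivalently $\sum_n \pi_q(n) q^{-n(1+\delta)}<\infty$ by the Prime Polynomial Theorem \eqref{ppt}). The terms with an explicit $|P|^4$, namely $|P|^4(w^8x^4)^{d(P)}$ and $|P|^4(w^8x^4u)^{d(P)}$, force $|q^2x^2w^4|<1/q$; the $|P|^3$ terms force $|qxw^3|<1/q$ and $|qx^2w^3|<1/q$ (via $w^6x^3$, $w^6x^3u$ paired against the $|P|^{-1}$ from the overall $(1-u^{d(P)})^{-1}$ structure — more precisely one checks the half-powers); the $|P|^2$ terms give $|xw|<1/\sqrt q$; and the lower-order terms give $|w|<1/\sqrt q$, $|wu|<1/q$, $|xwu|<1/q$, $|qxw^2u|<1/q$. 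Collecting these yields precisely the list of conditions claimed, completing the proof.
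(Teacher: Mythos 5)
Your approach matches the paper's: both Euler-factorize $\mathcal{A}$ using the multiplicativity of $G(l^2,\chi_f)$ in $f$ and (prime-by-prime, since the Gauss sum depends only on $P$-adic valuations and $(\tfrac{V_1}{P})=1$ for $V_1$ a square) in $l$, compute the local factors from Lemma~\ref{computeg}, and read off absolute convergence from the monomials in $\mathcal{B}_P$ after extracting the zeta-factors. One small slip in your auxiliary cross-check: at $x=0$ only $l=1$ survives, hence $V=1$, so the relevant Gauss sum is $G(1,\chi_f)$ (equal to $\sqrt{|f|}$ for $f$ square-free, zero otherwise), not $G(0,\chi_f)$, and the resulting specialization is $\prod_P\bigl(1+4w^{d(P)}/(1-u^{d(P)})\bigr)$ rather than a specialization of $\mathcal{H}(w,u)$.
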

\begin{proof}
Since $G(l^2,\chi_f)$ is multiplicative as a function of $f$, we can write
\begin{align*}
 \sum_{f \in \mathcal{M}} \frac{d_4(f) G(l^2,\chi_f)}{ \sqrt{|f|} \prod_{P|f} (1-u^{d(P)})} w^{d(f)} &= \mathcal{Z}(w)^4 \prod_{P | l} (1-w^{d(P)})^4 \left( 1+ \frac{1}{1-u^{d(P)}} \sum_{i=1}^{\infty} \frac{d_4(P^i) G(l^2,\chi_{P^i}) w^{id(P)} } { |P|^{i/2} } \right) \\
 & \prod_{P \nmid l} \Bigg( 1+ \frac{4 (wu)^{d(P)} }{1-u^{d(P)}} +6 w^{2d(P)} - \frac{16 w^{2d(P)} }{1-u^{d(P)}} - 4w^{3d(P)} + \frac{24 w^{3 d(P)}}{1-u^{d(P)}} \\
 & + w^{4 d(P)} - \frac{16 w^{4 d(P)}}{1-u^{d(P)}} +\frac{4w^{5 d(P)}}{1-u^{d(P)}} \Bigg).
 \end{align*}
 Introducing the sum over $l$ and using the multiplicativity of the Euler products give the expression in Lemma \ref{auxiliary}. The absolute convergence of $\mathcal{B}(x,w,u)$ follows directly from the expression for $\mathcal{B}_P(x,w,u)$ by factoring out the appropriate zeta-functions.
\end{proof}

Now we can go back to the proof of Lemma \ref{square}. 
\begin{proof}
Recall that
\begin{align}
S_{4g,\text{e}}(V= \square) &= q^{2g+1}\sum_{\substack{f \in \mathcal{M}_{\leq 4g} \\ d(f) \text{ even}}} \frac{d_4(f)}{|f|^{\frac{3}{2}}}\sum_{\substack{C \in \mathcal{M}_{\leq y} \\ C | f^{\infty}}} \frac{1}{|C|^2} \bigg[ (q-1)  \sum_{ l \in \mathcal{M}_{\leq \frac{d(f)}{2}-g-2+d(C)} } G(l^2, \chi_f)  \nonumber \\
&- \sum_{l \in \mathcal{M}_{\frac{d(f)}{2}-g-1+d(C)}} G(l^2,\chi_f) - \frac{q-1}{q}  \sum_{ l \in \mathcal{M}_{\leq \frac{d(f)}{2}-g-1+d(C)} } G(l^2, \chi_f) + \frac{1}{q} \sum_{l \in \mathcal{M}_{\frac{d(f)}{2}-g+d(C)}} G(l^2,\chi_f) \bigg] .  \nonumber \end{align} 
 As for the main term, we evaluate the sum over $C$ as 
  \begin{equation}
   \sum_{\substack{ C \in \mathcal{M}_i \\ C | f^{\infty}}} \frac{1}{|C|^2} = \frac{1}{2\pi i}  \displaystyle \oint_{\Gamma_1} \frac{1}{q^{2i} u^{i+1} \prod_{P|f} (1-u^{d(P)})} \, du,
   \label{sumc2}
   \end{equation} where $\Gamma_1$ is a circle around the origin of radius less than $1$. In the expression for $S_{4g,\text{e}}(V = \square)$ above, let $n=d(f)$ and $i = d(C)$. Using Perron's formula twice, it follows that
   $$ S_{4g,\text{e}}(V=\square) = q^{2g+1} \frac{1}{ (2 \pi i)^3} \oint_{\Gamma_3} \oint_{\Gamma_2} \oint_{\Gamma_1} \sum_{\substack{n=0 \\ n \text{ even}}}^{4g} \frac{1}{q^n} \sum_{i=0}^y \frac{1}{q^{2i} u^{i}} \frac{(qx-1) \mathcal{A}(x,w,u)}{(1-x) x^{\frac{n}{2} - g+i} w^{n}} \left( 1- \frac{1}{qx} \right) \, \frac{du}{u} \, \frac{dw}{w} \, dx,$$ where $\Gamma_j$ is a circle around the origin of radius $R_j$ (for $j = 1,2,3$). Using the convergence of $\mathcal{A}(x,w,u)$ from Lemma \ref{auxiliary}, we initially pick $R_1= \frac{1}{q}, R_2= \frac{1}{q^2}, R_3= \frac{1}{q^{1+\epsilon}}.$ Using Lemma \ref{auxiliary} and computing the sums over $n$ and $i$ we get
   \begin{align}
 S_{4g,\text{e}}(V=\square) = -q^{2g+1} \frac{1}{ (2 \pi i)^3} \oint_{\Gamma_3} \oint_{\Gamma_2} \oint_{\Gamma_1}  & \frac{x^g (1-qxw)^4 \mathcal{B}(x,w,u)}{(1-qw)^4 (1-q^2w^2x)^{11} (q^2w^2x)^{2g} (1-x)(1-q^2ux)(q^2ux)^{y}}  \nonumber \\
 & \times \left(1-\frac{1}{qx} \right)  \, \frac{du}{u} \, \frac{dw}{w} \, dx \label{eq3}.
   \end{align}
   We enlarge the contour of integration $\Gamma_2$ to a circle around the origin $\Gamma_2'$ of radius $R_2'=q^{-\frac{1}{2} - \epsilon},$ and we encounter a pole at $w=q^{-1}$. Now we write
   \begin{align*}
&   \frac{1}{ 2 \pi i} \oint_{\Gamma_2}  \frac{x^g (1-qxw)^4 \mathcal{B}(x,w,u)}{(1-qw)^4 (1-q^2w^2x)^{11} (q^2w^2x)^{2g} (1-x)(1-q^2ux)(q^2ux)^{y}} \left(1- \frac{1}{qx} \right) \, \frac{dw}{w} = -\text{Res} (w =\frac{1}{q}) \\
   & +   \frac{1}{ 2 \pi i} \oint_{\Gamma_2'}  \frac{x^g (1-qxw)^4 \mathcal{B}(x,w,u)}{(1-qw)^4 (1-q^2w^2x)^{11} (q^2w^2x)^{2g} (1-x)(1-q^2ux)(q^2ux)^{y}} \left(1- \frac{1}{qx} \right) \, \frac{dw}{w} .
   \end{align*}
   Plugging this in equation \eqref{eq3}, we can bound the integral over the new contour by $q^{g(1+\epsilon)}$ (by keeping in mind the radii $R_1,R_2',R_3$). We compute the residue at $w=1/q$. Then
   \begin{align*}
    S_{4g,\text{e}}(V=\square) &= q^{2g+1} \frac{1}{(2 \pi i)^2} \oint_{\Gamma_3} \oint_{\Gamma_1} \frac{x^g \left( 1- \frac{1}{qx} \right) \mathcal{B}(x ,\frac{1}{q}, u) }{(1-q^2xu)(q^2xu)^y (1-x)^{11} x^{2g}} \Bigg[ 2 \Bigg( -32g^3(1-x)^3 \\
    &+48g^2(1-x)^2(-1+10x)-2g(1-x)(11-118x+1199x^2)-3+30x+63x^2+3990x^3 \Bigg) \\
    &+ (1-x) \Bigg( 6 ( 8g^2(1-x)^2 + 6g(1-x)(1-13x) +1-9x+190x^2) \frac{ \frac{1}{q} \frac{d}{dw} \mathcal{B}(x,w,u)}{\mathcal{B}(x,w,u)} \rvert_{w=\frac{1}{q}} \Bigg)  \\
    &+(1-x)^2 \Bigg( 3(-4g(1-x)-1+19x) \frac{ \frac{1}{q^2} \frac{d^2}{d w^2} \mathcal{B}(x,w,u)}{\mathcal{B}(x,w,u)} \rvert_{w=\frac{1}{q}} \Bigg) \\
    &+(1-x)^3   \frac{ \frac{1}{q^3} \frac{d^3}{d w^3} \mathcal{B}(x,w,u)}{\mathcal{B}(x,w,u)} \rvert_{w=\frac{1}{q}}   \Bigg] \, \frac{du}{u} \, dx.
   \end{align*}
    We shift the contour over $u$ to a circle $\Gamma_1'$ around the origin of radius $R_1'=\frac{1}{q^{\epsilon}},$ and we encounter a pole at $u = \frac{1}{q^2x}$. Let $f(x,u)$ be the integrand above. Then
   $$ S_{4g,\text{e}}(V= \square) = q^{2g+1} \frac{1}{2 \pi i} \oint_{\Gamma_3} - \text{Res} \left(u = \frac{1}{q^2x} \right) \, dx + q^{2g+1} \frac{1}{(2 \pi i)^2} \oint_{\Gamma_3} \oint_{\Gamma_1'} f(x,u) \, \frac{du}{u} \, dx.$$ 
   Note that in the double integral above, we can shift the contour of integration over $x$ to a circle $\Gamma_3'$ of radius $R_3'= q^{\epsilon/2},$ and we encounter a pole at $x=1$. Then it follows that the double integral is bounded by $ O(q^{2g-(2-\frac{\epsilon}{2}) \epsilon})$. Further computing the residue at $u=\frac{1}{q^2x}$ gives
   \begin{align*}
   S_{4g,\text{e}}(V= \square) &= q^{2g+1} \frac{1}{2 \pi i} \oint_{\Gamma_3} \frac{\mathcal{B}(x,\frac{1}{q},\frac{1}{q^2x}) \left(1-\frac{1}{qx} \right)} { (1-x)^{11} x^g}   \Bigg[ 2 \Bigg( -32g^3(1-x)^3 \\
    &+48g^2(1-x)^2(-1+10x)-2g(1-x)(11-118x+1199x^2)-3+30x+63x^2+3990x^3 \Bigg) \\
    &+ (1-x) \Bigg( 6 ( 8g^2(1-x)^2 + 6g(1-x)(1-13x) +1-9x+190x^2) \frac{ \frac{1}{q} \frac{d}{dw} \mathcal{B}(x,w,\frac{1}{q^2x})}{\mathcal{B}(x,w,\frac{1}{q^2x})} \rvert_{w=\frac{1}{q}} \Bigg)  \\
    &+(1-x)^2 \Bigg( 3(-4g(1-x)-1+19x) \frac{ \frac{1}{q^2} \frac{d^2}{d w^2} \mathcal{B}(x,w,\frac{1}{q^2x})}{\mathcal{B}(x,w,\frac{1}{q^2x})} \rvert_{w=\frac{1}{q}} \Bigg) \\
    &+(1-x)^3   \frac{ \frac{1}{q^3} \frac{d^3}{d w^3} \mathcal{B}(x,w,\frac{1}{q^2x})}{\mathcal{B}(x,w,\frac{1}{q^2x})} \rvert_{w=\frac{1}{q}}   \Bigg] \, dx+ O(q^{2g-(2-\frac{\epsilon}{2}) y}).
   \end{align*}
   In the integral above, there is a pole of order $11$ at $x=1$. Moreover, $\mathcal{B}(x,\frac{1}{q},\frac{1}{q^2x})$ is absolutely convergent for $|x|<\sqrt{q}$. Hence we shift the contour of integration to a circle of radius $q^{\frac{1}{2}-\epsilon}$ around the origin and compute the residue at $x=1$. The residue will give the main term of size $q^{2g+1}g^{10}$, and the integral around the circle of radius $q^{\frac{1}{2}-\epsilon}$ is bounded by $O(q^{\frac{3g}{2}(1+\epsilon)})$. This yields Lemma \ref{square}.
\end{proof}

\section{Bounding the contribution from non-square $V$}
Here we will bound the term $S(V \neq \square)$. Recall that $S_{4g}(V \neq \square) =S_{4g,\text{o}}+S_{4g,\text{e}}(V \neq \square)$, and $S_{4g,\text{o}}$ is given by equation ~\eqref{odd}. We'll prove the following.
\begin{lemma}
We have
$$ S_{4g}(V \neq \square) \ll  q^{2g+1} g^{9+\epsilon} q^{y \gamma} $$ and
$$S_{4g-1}(V \neq \square) \ll q^{2g+1} g^{9+\epsilon} q^{y \gamma},$$ where $\gamma$ is such that $1/\gamma^2 = o (\log g)$. 
\label{nons}
\end{lemma}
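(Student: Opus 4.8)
The plan is to reduce the bound on $S_{4g}(V\neq\square)$ to an estimate for a shifted fourth moment of $\mathcal{L}(u,\chi_D)$ integrated over a circle of radius $1/\sqrt q$, and then exploit the fact that the relevant arc-by-arc contributions decay as one moves away from the critical point $u=1/\sqrt q$. First I would take the expression \eqref{s1e} for $S_{4g,\text{e}}(V\neq 0)$ (and similarly \eqref{odd2} for $S_{4g,\text{o}}$), restrict to $V$ non-square, and use Lemma \ref{computeg} to unwind $G(V,\chi_f)$: writing $f=\rad(C)W$ as in the proof of Lemma \ref{tailc}, the condition $C\mid f^\infty$ together with the Gauss-sum evaluation lets me re-express the inner sum over $V$ as a character sum $\chi_A(\cdot)$ against a square-free $A$, at the cost of a $\sqrt{|P|^{1/2}}$-type factor for each relevant prime. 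The key structural point is that the generating Dirichlet series in $W$ (with the $d_4$ weight) is again essentially $\mathcal{L}(u,\chi_A)^4$ times a convergent Euler product, exactly as in Lemma \ref{tailc}; so after an application of Perron's formula the whole of $S_{4g}(V\neq\square)$ becomes a contour integral $\frac{1}{2\pi i}\oint$ of (a sum over $C,B$ of) $\sum_{A}\chi_A(\text{stuff})\,\mathcal{L}(u,\chi_A)^4$ times benign factors, with the contour a circle $|u|=1/\sqrt q$ (the functional equation/completed $L$-function lets us put the contour exactly on the critical circle rather than inside it, which is what makes the non-square cancellation visible).

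Next I would parametrize $u=e^{i\theta}/\sqrt q$ and split the $\theta$-integral into the arc $|\theta|\le 1/g$ near the critical point and its complement. On each piece I apply Corollary \ref{cor4} (equivalently Theorem \ref{upperbound} with $k=4$), which gives
\begin{equation}
\sum_{D\in\mathcal H_{2g+1}}\Big|\mathcal L\big(\tfrac{e^{i\theta}}{\sqrt q},\chi_D\big)\Big|^4\ll q^{2g+1}g^{4+\epsilon}\big(\min\{g,1/\theta\}\big)^6. \nonumber
\end{equation}
Integrating: the near arc contributes $\ll (1/g)\cdot q^{2g+1}g^{4+\epsilon}g^6 = q^{2g+1}g^{9+\epsilon}$, and the far arc contributes $\ll q^{2g+1}g^{4+\epsilon}\int_{1/g}^{\pi}\theta^{-6}\,d\theta$, which is $\ll q^{2g+1}g^{4+\epsilon}\cdot g^{5} = q^{2g+1}g^{9+\epsilon}$ as well — so both arcs are genuinely of size $g^{9+\epsilon}$, matching the claimed bound. (Here I am using that the extra arithmetic factors — the $\chi_A(\rad(C))$ twist, the $d_4(\rad(C))/\sqrt{|\rad(C)|\,|C|^2}$ and $1/|B|^2$ weights summed over $C,B$ with $d(C)\le y$ — contribute an absolutely convergent sum up to the factor $q^{y\gamma}$; this is precisely where the $q^{y\gamma}$ with $1/\gamma^2=o(\log g)$ enters, since the sum over $C$ of $d_4(\rad(C))|C|^{-2}$-type terms, when combined with the prime contributions from the non-square Gauss sums that are not quite $1/|P|$ but $1/|P|^{1+\gamma}$-sized, costs a $q^{y\gamma}$.) The argument for $S_{4g-1}(V\neq\square)$ is identical with $4g$ replaced by $4g-1$ throughout.

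One subtlety to handle carefully is that the shifted fourth moment I actually need is $\sum_D \chi_A(\cdot)\,\mathcal L(u,\chi_D)^4$ where the sum is over a \emph{sub-family} $A\in\mathcal H_{2g+1-2d(C)-2d(B)}$ of smaller conductor, not the full family $\mathcal H_{2g+1}$; but Theorem \ref{upperbound} applies verbatim to any such $\mathcal H_{2m+1}$, and the extra character twist $\chi_A(\rad(C))$ is harmless after Cauchy-Schwarz (or simply by noting $|\chi_A(\rad(C))|\le 1$ and bounding trivially, since we only want an upper bound). The main obstacle, and the place requiring the most care, is the bookkeeping that keeps the $q^{y\gamma}$ factor small: I must verify that the non-square Gauss-sum contributions from primes dividing $C$ produce convergence with an Euler factor of the shape $(1-|P|^{-1-\gamma})$ rather than $(1-|P|^{-1})$, so that $\sum_{d(C)\le y}$ of the resulting arithmetic weight is $O(q^{y\gamma})$ with $\gamma$ allowed to tend to $0$ like $1/\sqrt{\log g}$ — and then confirm that after summing over $B$ and $C$ and integrating over the circle, no logarithmic losses accumulate beyond the $g^\epsilon$ already present. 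Everything else is a routine combination of Lemmas \ref{poissonmonic}, \ref{computeg}, Perron's formula, and Corollary \ref{cor4}.
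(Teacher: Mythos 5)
Your high-level strategy — reduce to a shifted fourth moment integrated over the critical circle $|w|=q^{-1/2}$, split into a near arc and a far arc, and apply upper bounds for moments — matches the paper's. But there is a genuine structural gap in the middle.

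The key step you misstate is the claim that, after Poisson summation, the generating Dirichlet series in $W$ is ``essentially $\mathcal{L}(u,\chi_A)^4$ times a convergent Euler product, exactly as in Lemma \ref{tailc}.'' This is what happens in Lemma \ref{tailc}, where $C,A,B$ are fixed before Perron, so only one Perron variable is introduced and only one $L$-function appears. In Lemma \ref{nons}, however, you still have the truncated constraint $C\mid f^\infty$, $d(C)\le y$, and the quantity $\sum_{C\mid f^\infty,\,d(C)\le y}|C|^{-2}$ is \emph{not} a multiplicative function of $f$, so you cannot absorb it into a clean Euler product. The paper's workaround (a second Perron variable $u$ for the $C$-sum, equation \eqref{sumc}, with $|u|=q^{-\gamma}$) introduces the factor $\prod_{P\mid f}(1-u^{d(P)})^{-1}$, and when this is expanded against the characters $\chi_{V_0}(P)$ coming from Lemma \ref{computeg}, the generating function $\mathcal{F}(V_0;w,x,u)$ factors as $\mathcal{Z}(x)\mathcal{Z}(qw^2x)^{10}\,\mathcal{L}(w,\chi_{V_0})^4\,\mathcal{L}(wu,\chi_{V_0})^4\,\mathcal{L}(wu^2,\chi_{V_0})^4\cdots$ — an \emph{infinite product of shifted $L$-functions}. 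On the critical circle the number of non-convergent factors is $l\approx 1/(2\gamma)\to\infty$, so the remainder is decidedly not ``a convergent Euler product.'' This is precisely why the paper needs Lemma \ref{shiftedupperb} (bounding $\sum_D|\mathcal{L}(w,\chi_D)\cdots\mathcal{L}(wu^{l-1},\chi_D)|^k$ for a growing number of shifts, proved using the Chandee--Soundararajan type machinery) rather than Corollary \ref{cor4}, which only controls one shifted fourth moment. Your proposal cites only Corollary \ref{cor4}, so the central estimate is not established.

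A couple of secondary issues worth noting. The square-free polynomial that plays the role of the family parameter is $V_0$, the square-free part of the Poisson dual variable $V=V_0V_1^2$; it is not the $A$ from the $h=B^2A$ factorization of Lemma \ref{tailc} (there is no $h$-sum left after Poisson summation), so the description of ``re-expressing the inner sum over $V$ as a character sum $\chi_A(\cdot)$'' conflates two different decompositions. Also, the paper splits the $w$-circle at angle $1/(2r)$ with $r=d(V_0)$ (a per-$r$ split, yielding $q^{n/2+r}r^{9+\epsilon}$ before summing over $r,n,i$), not at $1/g$; using $1/g$ uniformly does not obviously survive the bookkeeping over $r$. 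Finally, your heuristic for the $q^{y\gamma}$ loss as coming from Euler factors ``of size $1/|P|^{1+\gamma}$'' is not quite the mechanism: it arises because the $u$-contour has $|u|=q^{-\gamma}$ and the sum over $i=d(C)\le y$ carries a factor $|u|^{-(i+1)}=q^{\gamma(i+1)}$, which is largest at $i\approx y$.
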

\begin{proof}
We will only bound the term $S_{4g,\text{o}}$, since the other terms are similar. In equation ~\eqref{odd}, write $S_{4g,\text{o}}=S_{1,\text{o}}-S_{2,\text{o}}$, with $S_{1,\text{o}}$ corresponding to the sum with $d(V) = d(f)-2g-2-2d(C).$  Write $V = V_0 V_1^2$, with $V_0$ a square-free monic polynomial. Let $r=d(V_0)$, with $r$ odd. Using equation \eqref{sumc} for the sum over $C$, we rewrite
\begin{align*}
S_{1,\text{o}} = q^{2g+1} \sqrt{q}  \frac{1}{2 \pi i}& \oint_{|u|=r_1} \sum_{\substack{n=0 \\ n \text{ odd}}}^{4g} \frac{1}{q^n} \sum_{i=0}^y \frac{1}{q^{2i} u^{i+1}} \sum_{\substack{r = 0 \\ r \text{ odd}}}^{n-2g-2+2i} \sum_{V_0 \in \mathcal{H}_r} \sum_{V_1 \in \mathcal{M}_{\frac{n-2g-2+2i-r}{2}}} \\
& \times  \sum_{f \in \mathcal{M}_n} \frac{d_4(f) G(V_0 V_1^2,\chi_f)}{\sqrt{|f|} \prod_{P|f} (1-u^{d(P)})} \, du,
\end{align*} where $r_1<1$.
Let $$ \mathcal{F} (V_0 ; w,x,u) := \sum_{V_1 \in \mathcal{M}} \sum_{f \in \mathcal{M}} \frac{d_4(f) G(V_0V_1^2, \chi_f) }{\sqrt{|f|} \prod_{P|f} (1-u^{d(P)})} w^{d(f)} x^{d(V_1)}.$$

Then 
\begin{align*}
& \mathcal{F} (V_0 ; w,x,u) = \mathcal{Z}(x)  \prod_{P \nmid V_0} \bigg( 1+ \frac{4 \left( \frac{V_0}{P} \right) w^{d(P)} (1+|P|w^{2d(P)}x^{d(P)})(1-x^{d(P)})}{(1-u^{d(P)})(1-|P|w^{2d(P)}x^{d(P)})^4} \bigg)\\
  &+  \frac{(|P|-1)w^{2d(P)}x^{d(P)} (10-5|P|w^{2d(P)}x^{d(P)}+4|P|^2w^{4d(P)}x^{2d(P)}-|P|^3w^{6d(P)}x^{3d(P)})}{(1-u^{d(P)})(1-|P|w^{2d(P)}x^{d(P)})^4} \\
  &\prod_{P|V_0} \bigg( 1- \frac{w^{2d(P)}(1-|P|x^{d(P)})(10-5|P|w^{2d(P)}x^{d(P)}+4|P|^2w^{4d(P)}x^{2d(P)}-|P|^3w^{6d(P)}x^{3d(P)})}{(1-u^{d(P)})(1-|P|w^{2d(P)}x^{d(P)})^4} \bigg).
  \end{align*} 
  Note that we can further write
  $$ \mathcal{F} (V_0 ; w,x,u)  = \mathcal{Z}(x) \mathcal{Z}(qw^2x)^{10} \mathcal{L}(w,\chi_{V_0})^4 \mathcal{L}(wu,\chi_{V_0})^4 \mathcal{L}(wu^2 ,\chi_{V_0})^4 \cdot \ldots.$$
  Using Perron's formula twice, we have that
   \begin{align}
  S_{1,\text{o}} &= q^{2g+1}  \sqrt{q}  \frac{1}{(2 \pi i)^3} \oint \oint \oint x^g \sum_{\substack{n=0 \\ n \text{ odd}}}^{4g} \frac{1}{q^n w^{n+1} x^{n/2}} \sum_{i=0}^{y} \frac{1}{q^{2i}u^{i+1} x^i} \sum_{\substack{r=0 \\ r \text{ odd}}}^{n-2g-2+2i} x^{r/2} \nonumber \\
  & \times \sum_{V_0 \in \mathcal{H}_r} \mathcal{F}(V_0;w,x,u) \, du \,dw \, dx, \label{sodd}
  \end{align} where $|x|<1/q, |u|<1$ and $ |q^2w^2x|<1$. We pick $|x|=1/q^{1+\epsilon}, |w|=1/q^{1/2}$ and $|u|=1/q^{\gamma}$, where $\gamma \to 0$ as $g \to \infty$. Let $l$ be an integer such that $|wu^{l-1} | \geq 1/q$ and $|wu^l|<1/q$.
  We will prove the following.
  \begin{lemma}
  Let $|w|=q^{-\frac{1}{2}}, |u|=q^{-\gamma}$, with $\gamma$ depending on $g$ such that $ 1/\gamma^2 = o(\log g)$. Let $l$ be an integer such that $|wu^{l-1}| \geq q^{-1}$ and $|wu^l|<q^{-1}$. Let $k$ be a positive real number and $\epsilon>0$. Then
  $$ \sum_{D \in \mathcal{H}_{2g+1}}  \left| \mathcal{L}(w,\chi_D) \mathcal{L}(wu,\chi_D) \cdot \ldots \cdot \mathcal{L}(wu^{l-1},\chi_D) \right|^k \ll q^{2g+1} g^{\epsilon} \exp \left( k \mathcal{M}(w,g)+ \frac{k^2}{2} \mathcal{V}(w,g) \right),$$ with
  $\mathcal{M}(w,g)$ and $\mathcal{V}(w,g)$ as in Theorem ~\ref{upperbound}.
  \label{shiftedupperb}
  \end{lemma}
  We postpone the proof of the Lemma to section ~\ref{upper}.
  
  Now we use the bound above in equation ~\eqref{sodd} as follows. We write the integral over $w$ as $\oint_{|w|=1/\sqrt{q}} = \int_{C_1} + \int_{C_2}$, where $C_1$ is the arc around $q^{-\frac{1}{2}}$ of angle $1/2r$, and $C_2$ is its complement. Using Lemma ~\ref{shiftedupperb} for $k=4$, the expressions for $\mathcal{M}(w,g)$ and $\mathcal{V}(w,g)$ and since the arc $C_1$ has length of size $1/r$, we have that $$ \left|\int_{C_1} \frac{1}{w^{n+1}} \sum_{V_0 \in \mathcal{H}_r} \mathcal{F}(V_0;w,x,u) \, dw \right| \ll q^{n/2+r} r^{10+\epsilon} \frac{1}{r} \ll q^
  {n/2+r} r^{9+\epsilon}.$$ For the integral over $C_2$, we make a change of variables $w=\frac{1}{\sqrt{q}} e^{i \theta}$, and then 
  $$ \left|\int_{C_1} \frac{1}{w^{n+1}} \sum_{V_0 \in \mathcal{H}_r} \mathcal{F}(V_0;w,x,u) \, dw \right| \ll q^{n/2+r} r^{4+\epsilon} \int_{1/2r}^{2 \pi - 1/2r} \frac{1}{ \theta^6} \, d\theta \ll q^
  {n/2+r} r^{9+\epsilon} .$$ 
  Then
  $$ \left|\frac{1}{2 \pi i} \oint_{|w|=q^{-\frac{1}{2}}}  \frac{1}{w^{n+1}} \sum_{V_0 \in \mathcal{H}_r} \mathcal{F}(V_0;w,x,u) \, dw \right| \ll q^
  {n/2+r} r^{9+\epsilon}.$$
  Plugging this in ~\eqref{sodd} and trivially bounding everything else, we get that $$S_{1,\text{o}} \ll q^{2g+1} g^{9+\epsilon} q^{y \gamma},$$ which finishes the proof of Lemma ~\ref{nons}. 
  
\end{proof}
\section{Proof of Theorem \ref{fourth}}
In this section, we obtain the asymptotic formula in Theorem \eqref{fourth}. We choose $y = 100 \log g$, and $\gamma=  (\log g)^{(\epsilon-1)/2}$. Using Lemma \ref{tailc}, equation \eqref{maint} and Lemma \ref{square}, we get that
$$ \sum_{D \in \mathcal{H}_{2g+1}} L \left( \frac{1}{2} ,\chi_D \right)^4 = q^{2g+1} g^{10} a_{10} + O(q^{2g+1} g^{9+\epsilon}),$$ where $a_{10}$ is a coefficient which can be computed explicitly (see equation \eqref{coeff1}).

 Now let $\alpha<4g$ be an even integer which we will choose later. For now, we think of $\alpha$ as being on the scale of $\log g$. We write
\begin{align}
\sum_{D \in \mathcal{H}_{2g+1}} L \left( \frac{1}{2},\chi_D \right)^4 &= \sum_{D \in \mathcal{H}_{2g+1}} \sum_{f \in \mathcal{M}_{\leq 4g-\alpha}} \frac{d_4(f) \chi_D(f)}{\sqrt{|f|}} + \sum_{D \in \mathcal{H}_{2g+1}} \sum_{\substack{f \in \mathcal{M} \\ 4g-\alpha<d(f) \leq 4g}} \frac{d_4(f) \chi_D(f)}{\sqrt{|f|}} \nonumber \\
& +  \sum_{D \in \mathcal{H}_{2g+1}} \sum_{f \in \mathcal{M}_{\leq 4g-1-\alpha}} \frac{d_4(f) \chi_D(f)}{\sqrt{|f|}} + \sum_{D \in \mathcal{H}_{2g+1}} \sum_{\substack{f \in \mathcal{M} \\ 4g-1-\alpha<d(f) \leq 4g-1}} \frac{d_4(f) \chi_D(f)}{\sqrt{|f|}} . \label{small}
\end{align}
Denote the first term above by $M_{1,\alpha}$, the second by $T_1,\alpha$, the third by $M_{2,\alpha}$ and the fourth by $T_{2,\alpha}.$ We will focus on $M_{1,\alpha}$ and $T_{1,\alpha}$, since the other two terms are similar.

\subsection{The term $M_{1,\alpha}$}

We treat the term $M_{1,\alpha}$ in the same way as we treated the full sum over polynomials $f$ in the previous sections. Write $M_{1,\alpha}= M_{1,\alpha}(V=0)+M_{1,\alpha}(V= \square)+M_{1,\alpha}(V \neq \square)$. Using the same methods as before, $M_{1,\alpha}(V \neq \square) \ll q^{2g- \frac{\alpha}{2}} g^{9+\epsilon}$, so $M_{1,\alpha}(V \neq \square) = o(q^{2g+1}).$
We have the following.
\begin{lemma} 
\label{maintrunc}
Keeping the previous notation, we have
\begin{align*}
M_{1,\alpha}(V = 0) &= \frac{q^{2g+1}}{10! \zeta_q(2)} \Bigg( (2g)^{10} Q_0(\alpha) + (2g)^9 Q_1(\alpha) +(2g)^8 Q_2(\alpha) \Bigg) +O\Big(q^{2g+1} g^{7+\epsilon}\Big),
\end{align*}
where $Q_i(\alpha)$ is a polynomial of degree $10-i$, for $i=0,1,2$. Moreover, $Q_i(x)$ can be written down explicitly (see formulas \eqref{q0}, \eqref{q1} and \eqref{q2}.)
 Also
 \begin{align*}
 M_{1,\alpha}(V= \square) = - \frac{q^{2g+1}}{10! \zeta_q(2)} \Bigg( g^{10} R_0(\alpha) + g^9 R_1(\alpha) +g^8 R_2(\alpha) \Bigg) + O\Big(q^{2g+1} g^{7+\epsilon}\Big),
 \end{align*}
 where $R_i(\alpha)$ is a polynomial of degree $10-i$ (see formulas \eqref{r0}, \eqref{r1} and \eqref{r2}.)
\end{lemma}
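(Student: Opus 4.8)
**

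The plan is to treat $M_{1,\alpha}$ by exactly the same machinery developed in Sections \ref{maintermsec}--\ref{secmain} for the full sum, with $4g$ replaced by $4g-\alpha$, and then to carry the $\alpha$-dependence through the residue computations. First I would open up $M_{1,\alpha} = \sum_{D} \sum_{f \in \mathcal{M}_{\leq 4g-\alpha}} d_4(f)\chi_D(f)|f|^{-1/2}$ using Lemma \ref{sumd}, split off the tail $d(C) > y$ as in Lemma \ref{tailc} (which costs only $O(q^{2g-3y/2}g^{10+\epsilon}y^3)$, negligible for $y = 100\log g$), and apply Poisson summation (Lemma \ref{poissonmonic}) to the sum over $h$ for the remaining range $d(C) \leq y$. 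This produces the three pieces $M_{1,\alpha}(V=0)$, $M_{1,\alpha}(V=\square)$, $M_{1,\alpha}(V\neq\square)$. The last is bounded by $q^{2g-\alpha/2}g^{9+\epsilon} = o(q^{2g+1})$ by the argument of Lemma \ref{nons} (the shift from $4g$ to $4g-\alpha$ only shrinks the ranges of summation, so the same upper-bound input from Theorem \ref{upperbound} and Lemma \ref{shiftedupperb} applies verbatim). So everything reduces to evaluating the two ``diagonal-type'' terms exactly.

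For $M_{1,\alpha}(V=0)$: writing $f = l^2$ as in \eqref{m4g}, we get
$$M_{1,\alpha}(V=0) = \frac{q^{2g+1}}{\zeta_q(2)} \sum_{l \in \mathcal{M}_{\leq 2g-\alpha/2}} \frac{d_4(l^2)\phi(l^2)}{|l|^3} \sum_{\substack{C \in \mathcal{M}_{\leq y} \\ C|l^\infty}} \frac{1}{|C|^2},$$
and introducing the generating function $\mathcal{A}(w,u) = \mathcal{Z}(w)^{10}\mathcal{H}(w,u)$ from \eqref{hw} together with Perron's formula, the only structural change relative to Section \ref{maintermsec} is that the sum over $d(l)$ now runs up to $2g-\alpha/2$, which replaces the factor $(qw)^{-2g}$ in the contour integral by a geometric-sum kernel whose residue at $w = 1/q$ produces a polynomial in $2g$ whose coefficients are themselves polynomials in $\alpha$ of the stated degrees. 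Concretely, one shifts $w$ to $|w| = q^{-1/2-\epsilon}$ picking up the order-$11$ pole at $w=1/q$; the extra truncation factor $\sum_{m \leq 2g-\alpha/2}(qw)^{-m}$ contributes, via the residue, terms of the form $(2g-\alpha/2)^j \cdot (\text{derivatives of }\mathcal{H})$, and expanding $(2g-\alpha/2)^j$ in powers of $2g$ gives the polynomials $Q_i(\alpha)$. I would then shift $u$ to $|u| = q^{-\epsilon}$, pick up the pole at $u = 1/q^2$, and collect the terms down to order $(2g)^8$, absorbing everything below into the $O(q^{2g+1}g^{7+\epsilon})$ error (the double-integral remainder is $O(q^{g(1+\epsilon)})$ as in \eqref{maint}). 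The term $M_{1,\alpha}(V=\square)$ is handled the same way via Lemma \ref{auxiliary}: in \eqref{eq3}--\eqref{s1s} one replaces the upper limit $4g$ on $n$ by $4g-\alpha$, which again only modifies the geometric sum over $n$ and hence the kernel in the $w$-integral; shifting $w$ to $|w|=q^{-1/2-\epsilon}$ (pole at $w=1/q$), then $u$ to $|u|=q^{-\epsilon}$ (pole at $u = 1/(q^2x)$), then $x$ to $|x| = q^{1/2-\epsilon}$ (pole of order $11$ at $x=1$), and reading off the coefficients of $g^{10}, g^9, g^8$ yields $-\frac{q^{2g+1}}{10!\zeta_q(2)}(g^{10}R_0(\alpha) + g^9 R_1(\alpha) + g^8 R_2(\alpha))$ with the sign and the $1/10!$ emerging from the residue of the order-$11$ pole exactly as in the proof of Lemma \ref{square}.

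The main obstacle is bookkeeping rather than any new idea: one must track the $\alpha$-dependence through a triple residue computation (an order-$11$ pole in one variable, composed with simple poles in the other two) and verify that only the top three powers of $g$ survive above the $g^{7+\epsilon}$ threshold, with the lower-order contributions of the shifted contours genuinely bounded by $q^{2g+1}g^{7+\epsilon}$ (using $\alpha = O(\log g)$ to absorb the $\alpha$-powers into $g^\epsilon$). In particular one needs the explicit polynomial identities that express the residue at the order-$11$ pole in terms of $g$, $\alpha$, and the logarithmic derivatives $\frac{d^j}{dw^j}\mathcal{H}$ resp. $\frac{d^j}{dw^j}\mathcal{B}$ evaluated at $w = 1/q$; these are the formulas \eqref{q0}--\eqref{q2} and \eqref{r0}--\eqref{r2} and I would verify them by the same symbolic residue expansion used to obtain the bracketed expression in the proof of Lemma \ref{square}, being careful that the substitution $2g \mapsto 2g$ in the $V=0$ case versus $g \mapsto g$ (via $x^g/x^{2g}$ bookkeeping) in the $V=\square$ case is what accounts for the degree-$10$ polynomials being in $2g+1$ after reassembly with the remaining pieces.
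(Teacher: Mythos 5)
Your proposal is correct and follows essentially the same route as the paper: replace the upper limit of summation by $4g-\alpha$, carry that through the Perron set-up so that the only change to the contour integrals of Sections \ref{maintermsec} and \ref{secmain} is the exponent $2g\mapsto 2g-\tfrac{\alpha}{2}$ in the kernel ($(qw)^{2g-\alpha/2}$ for $V=0$ and $(q^2w^2x)^{2g-\alpha/2}$ for $V=\square$), and then read off the polynomials $Q_i(\alpha)$, $R_i(\alpha)$ from the residue at the order-$11$ pole after expanding $(2g-\alpha/2)^j$ (resp. the $x^g/x^{2g}$ power) in powers of $2g$ (resp. $g$), with the low-order terms absorbed into $O(q^{2g+1}g^{7+\epsilon})$ since $\alpha\ll\log g$. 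This is precisely the argument the paper uses, invoking the same generating functions $\mathcal{H}$ and $\mathcal{B}$ and the same contour shifts.
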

\begin{proof}
Similarly as for the term $M_{4g}$, we have
$$ M_{1,\alpha}(V=0) =  \frac{q^{2g+1}}{\zeta_q(2)} \frac{1}{(2 \pi i)^2} \oint_{|w|=\frac{1}{q^{1+\epsilon}}} \oint_{|u|=\frac{1}{q^{2+\epsilon}}} \frac{\mathcal{H}(w,u)}{(1-qw)^{11}(qw)^{2g-\frac{\alpha}{2}} (1-q^2u)(q^2u)^y} \, \frac{du}{u} \, \frac{dw}{w},$$
with $\mathcal{H}(w,u)$ as in \eqref{hw}. For simplicity, since we will only evaluate $\mathcal{H}(w,u)$ at $u=\frac{1}{q^2}$, we let $\mathcal{H}(w,\frac{1}{q^2}) = \mathcal{H}(w)$. We proceed similarly as in section \ref{maintermsec}, and then the expression for $M_{1,\alpha}(V=0)$ follows by a residue calculation. Note that we can write down lower order terms for $M_{1,\alpha}(V=0)$, but for our purposes it is enough to consider the coefficients down to $g^7$. By the residue computation, we have 
\begin{align}
& Q_0(\alpha) = \bq \label{q0} \\
&Q_1(\alpha) = - 5 \alpha \bq + 55 \bq- 10 \bprime \label{q1} \\
& Q_2(\alpha) =  \frac{ 45 \alpha^2 }{4} \bq+ \alpha \left( -\frac{495 \bq}{2} + 45 \bprime \right) +1320\bq-  450\bprime +  45 \bdoubleprime \label{q2}.
\end{align}

As in section \ref{secmain}, we also have
\begin{align*}
M_{1,\alpha}(V=\square) = -q^{2g+1} \frac{1}{ (2 \pi i)^3} \oint_{\Gamma_3} \oint_{\Gamma_2} \oint_{\Gamma_1}  & \frac{x^g (1-qxw)^4 \mathcal{B}(x,w,u)}{(1-qw)^4 (1-q^2w^2x)^{11} (q^2w^2x)^{2g-\frac{\alpha}{2}} (1-x)(1-q^2ux)(q^2ux)^{y}}   \\
 & \times \left(1-\frac{1}{qx} \right)  \, \frac{du}{u} \, \frac{dw}{w} \, dx ,
   \end{align*}
with $\Gamma_i$ as for $S_{4g,\text{e}}(V=\square)$. Since we will only evaluate $\mathcal{B}(x,w,u)$ at $u= \frac{1}{q^2 x}$, for simplicity, we let $\mathcal{C}(x,w) = \mathcal{B}(x,w,\frac{1}{q^2x}).$ The expression for $M_{1,\alpha}(V=\square)$ follows by a direct residue computation. We have
\begin{align}
& R_0(\alpha) = 640 \cq \label{r0} \\
& R_1(\alpha) = -2100 \alpha \cq - 2000 \zeta_q(2) \cq+20100 \cq -1100 \cprimew - 2000 \cprimex \label{r1} \\
& R_2(\alpha) = 2880 \alpha^2 \cq + \alpha \Big( 4140 \zeta_q(2) \cq - 57150 \cq +3690 \cprimew +4140 \cprimex \Big) \nonumber \\
&  \qquad  \quad -48420 \zeta_q(2) \cq +269430 \cq - 32670 \cprimew+4860 \zeta_q(2) \cprimew  \nonumber \\
&  \qquad  \quad +  630 \frac{ \frac{d^2}{dw^2} \mathcal{C} (1,w) \rvert_{w=\frac{1}{q}}}{q^2} -1440 \zeta_q(2) \cprimex -48420 \cprimex + 4860  \frac{ \frac{d}{dw} \frac{d}{dx}  \mathcal{C} (x,w) \rvert_{x=1,w=\frac{1}{q}}}{q} \nonumber \\
& \qquad  \quad - 720  \frac{d^2}{dx^2} \mathcal{C} (x, \frac{1}{q}) \rvert_{x=1}. \label{r2}
\end{align}
\end{proof}
\begin{remark}
Note that the sum $M_{1,\alpha}(V=0)+ M_{1,\alpha}(V = \square)$ only involves terms of size $g^9 \alpha$, and $g^8 \alpha$ (the $g^8 \alpha^2$ term in $M_{1,\alpha}(V=0)$ cancels out with the corresponding term in $M_{1,\alpha}(V=\square)$). We will show that these two terms cancel out the contribution from $T_{1,\alpha}$.
\end{remark}

\subsection{The term $T_{1,\alpha}$}

Using Perron's formula, we write
\begin{equation}
T_{1,\alpha} =  \frac{1}{2 \pi i}  \oint_{C} \frac{1 - z^{\alpha}}{(1-z)z^{4g+1}} \sum_{D \in \mathcal{H}_{2g+1}} \mathcal{L} \left(  \frac{z}{\sqrt{q}} , \chi_D \right)^4 \, du,
\label{talpha}
\end{equation}
where we pick $C$ to be a circle of radius $1$ around the origin. We further decompose the circle into two arcs: an arc $C_1$ of angle $\theta_0$ around $1$, and its complement $C_2$. We will choose $\theta_0$ later, but for now we keep in mind that $\theta_0$ is a function of $g$, $\theta_0 \to 0 $ as $g \to \infty$ and $g \theta_0  \to \infty$. Then $T_{1,\alpha} = T_{1, C_1} + T_{1,C_2}$, where
\begin{equation}
T_{1,C_1} = \frac{1}{2 \pi i} \int_{C_1} \frac{1 - z^{\alpha}}{(1-z)z^{4g+1}} \sum_{D \in \mathcal{H}_{2g+1}} \mathcal{L} \left(  \frac{z}{\sqrt{q}} , \chi_D \right)^4 \, du ,
\label{tt1}
\end{equation}
and $T_{1,C_2}$ is similarly defined. We treat $T_{1,C_1}$ and $T_{1,C_2}$ differently. For $T_{1,C_2}$, we will use upper bounds for moments of $L$--functions. On $C_1$, we will use an asymptotic formula for the shifted moment $\sum_{D} \mathcal{L}(u/\sqrt{q},\chi_D)^4$ with an error of size $g^{9+\epsilon}.$ Since $C_1$ is a small arc around $1$, we will manage to get a better error bound. Then we will explicitly compute terms of size $g^9 \alpha, g^8 \alpha^2$ and $g^8 \alpha$ which will cancel out the contribution from $M_{1,\alpha}$.

We will prove the following.
\begin{lemma}
With the same notation as before, 
$$T_{1,C_2} \ll q^{2g+1} g^{4+\epsilon} \theta_0^{-5}.$$ \label{t2}
\end{lemma}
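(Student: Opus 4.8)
The plan is to bound the defining integral for $T_{1,C_2}$,
\[
T_{1,C_2}=\frac{1}{2\pi i}\int_{C_2}\frac{1-z^{\alpha}}{(1-z)z^{4g+1}}\sum_{D\in\mathcal{H}_{2g+1}}\mathcal{L}\Bigl(\tfrac{z}{\sqrt q},\chi_D\Bigr)^{4}\,dz,
\]
trivially, pairing the Perron weight $\frac{1-z^{\alpha}}{(1-z)z^{4g+1}}$ against the shifted fourth moment bound from Corollary~\ref{cor4}. I parametrize the unit circle $C$ by $z=e^{i\theta}$, so that, up to an irrelevant constant, $C_2=\{e^{i\theta}:\theta_0\le\theta\le 2\pi-\theta_0\}$. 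Since the coefficients of $\mathcal{L}(u,\chi_D)$ are real, $\mathcal{L}(\bar u,\chi_D)=\overline{\mathcal{L}(u,\chi_D)}$, and one checks the integrand has the same modulus at $e^{i\theta}$ and $e^{i(2\pi-\theta)}$; so it suffices to estimate the contribution of $\theta\in[\theta_0,\pi]$ and double (this also brings us into the range $\theta\in[0,\pi)$ in which Corollary~\ref{cor4} is stated).

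On $[\theta_0,\pi]$ I would use two inputs. First, since $|z^{4g+1}|=1$ and $\frac{1-z^{\alpha}}{1-z}=1+z+\cdots+z^{\alpha-1}$,
\[
\left|\frac{1-z^{\alpha}}{(1-z)z^{4g+1}}\right|\le\min\Bigl\{\alpha,\ \frac{2}{|1-z|}\Bigr\}\ll\min\{\alpha,\ \theta^{-1}\},
\]
using $|1-e^{i\theta}|=2|\sin(\theta/2)|\asymp\theta$. Second, bounding $\bigl|\sum_D(\cdot)\bigr|$ by $\sum_D|\cdot|$ and applying Corollary~\ref{cor4},
\[
\left|\sum_{D\in\mathcal{H}_{2g+1}}\mathcal{L}\Bigl(\tfrac{e^{i\theta}}{\sqrt q},\chi_D\Bigr)^{4}\right|\ll q^{2g+1}g^{4+\epsilon}\bigl(\min\{g,\theta^{-1}\}\bigr)^{6}=q^{2g+1}g^{4+\epsilon}\theta^{-6},
\]
where the last equality uses $g\theta_0\to\infty$, so that $\theta\ge\theta_0>g^{-1}$ for $g$ large and hence $\min\{g,\theta^{-1}\}=\theta^{-1}$ throughout the range.

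Multiplying these bounds reduces the lemma to the elementary estimate $\int_{\theta_0}^{\pi}\min\{\alpha,\theta^{-1}\}\,\theta^{-6}\,d\theta\ll\alpha\,\theta_0^{-5}$, which follows by splitting the integral at $\theta=1/\alpha$: the part over $[\theta_0,1/\alpha]$ contributes $\ll\alpha\theta_0^{-5}$ and the part over $[1/\alpha,\pi]$ contributes $\ll\alpha^{6}\le\alpha\theta_0^{-5}$ (and if $\theta_0\ge1/\alpha$ one just integrates $\theta^{-7}$ and gets $\ll\theta_0^{-6}\le\alpha\theta_0^{-5}$). Since $\alpha$ is of size $\log g$, in particular $\alpha\ll g^{\epsilon}$, this gives $T_{1,C_2}\ll q^{2g+1}g^{4+\epsilon}\theta_0^{-5}$ after renaming $\epsilon$.

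I do not expect a genuine obstacle here — the whole argument is a few lines — but the one point that must not be overlooked is to use the bound $|1-z^{\alpha}|/|1-z|\ll\min\{\alpha,\theta^{-1}\}$ near $z=1$, i.e.\ to exploit the smoothing already present in $T_{1,\alpha}$ (it is a sum of only $\alpha$ Dirichlet coefficients), rather than the cruder $\theta^{-1}$. The cruder bound would yield $\theta_0^{-6}$, one power of $\theta_0$ too weak, which would be fatal for the subsequent choice of $\theta_0$ and the final matching of lower order terms.
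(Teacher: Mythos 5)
Your proof is correct and is essentially the paper's one-line argument (``integrating $\theta^{-6}$ along the arc $C_2$'') made explicit: the Perron kernel is absorbed as a $g^{\epsilon}$ factor and Corollary~\ref{cor4} supplies the $\theta^{-6}$. One small remark: the finer bound $\min\{\alpha,\theta^{-1}\}$ and the split at $\theta=1/\alpha$ are unnecessary — the flat bound $\bigl|\tfrac{1-z^{\alpha}}{1-z}\bigr|=|1+z+\cdots+z^{\alpha-1}|\le\alpha\ll g^{\epsilon}$ already gives $\alpha\,\theta_0^{-5}$, so your closing warning (that the ``cruder bound would yield $\theta_0^{-6}$'') applies only to the $\theta^{-1}$ bound, not to the equally simple $\alpha$ bound that the paper implicitly uses.
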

\begin{proof}
The bound easily follows from Corollary \ref{cor4} by integrating $\theta^{-6}$ along the arc $C_2$.
\end{proof}

Now we focus on $T_{1,C_1}$. Similarly as in the previous sections, we can obtain an asymptotic formula for the shifted moment $\sum_D \mathcal{L}(z/\sqrt{q},\chi_D)^4$ when $z$ is on the arc $C_1$ (so close to $1$). By adapting the proof for finding an asymptotic for the fourth moment at the critical point, we have the following
\begin{align}
& \sum_{D \in \mathcal{H}_{2g+1}}  \mathcal{L} \left( \frac{z}{\sqrt{q}} , \chi_D \right)^4  = \frac{q^{2g+1}}{\zeta_q(2)} \frac{1}{ (2 \pi i)^2 } \oint \oint \frac{\mathcal{H}(w,u)}{(1-qw)^{10} (1-\frac{qw}{z^2}) ( \frac{qw}{z^2} )^{2g} (1-q^2u)(q^2u)^y} \, \frac{du}{u} \, \frac{dw}{w}  \label{firstt}  \\
&+  \frac{q^{2g+1}}{\zeta_q(2)} \frac{1}{ (2 \pi i)^2} \oint \oint \frac{ z^{4g+2} \mathcal{H}(w,u)}{(1-qw)^{10} (1-qwz^2) ( qw)^{2g-1}(1-q^2u)(q^2u)^y}  \, \frac{du}{u} \, \frac{dw}{w}  \label{secondt} \\
&-  \frac{q^{2g+1}}{(2 \pi i)^3} \oint \oint \oint \frac{x^g (1-qwx)^4 \mathcal{B}(x,w,u)(1-\frac{1}{qx})}{(1-x)(1-qw)^4(1-q^2w^2x)^{10} (1- \frac{q^2w^2x}{z^2}) ( \frac{q^2w^2x}{z^2})^{2g}(1-q^2ux)(q^2ux)^y} \, \frac{du}{u} \, \frac{dw}{w} \, dx  \label{thirdt}  \\
&-  \frac{q^{2g+1}}{(2 \pi i)^3} \oint \oint \oint \frac{z^{4g+2} x^g (1-qwx)^4 \mathcal{B}(x,w,u)(1-\frac{1}{qx})}{(1-x)(1-qw)^4(1-q^2w^2x)^{10}(1- q^2w^2xz^2) ( q^2w^2x)^{2g-1}(1-q^2ux)(q^2ux)^y} \, \frac{du}{u}  \, \frac{dw}{w} \, dx  \label{fourtht} \\
&+ O(q^{2g+1} g^{9+\epsilon}), \label{errorshift}
\end{align} where the contours of integration are the same as in sections \ref{maintermsec} and \ref{secmain}.
Now we plug this in the integral \eqref{tt1} for $T_{1,C_1}$. When we integrate the shifted moment along the arc $C_1$, we obtain two terms: one corresponding to the main term above (coming from $V=0$), which we denote by $T_{1,C_1}(V=0)$ (this term is the sum of the integrals over $C_1$ of \eqref{firstt} and \eqref{secondt}), and another term corresponding to the secondary main term in the shifted moment. We denote this term by $T_{1,C_1}(V= \square)$, and it is the sum of the integrals over $C_1$ of \eqref{thirdt} and \eqref{fourtht}. The error term \eqref{errorshift} will become $O(q^{2g+1} g^{9+\epsilon} \theta_0)$ in the integral for $T_{1,C_1}$. Using this bound and Lemma \ref{t2}, it follows that
\begin{align}
\sum_{D \in \mathcal{H}_{2g+1}} L \left( \frac{1}{2} , \chi_D \right)^4 &= M_{1,\alpha}(V=0)+M_{1,\alpha}(V=\square)+M_{2,\alpha}(V=0)+M_{2,\alpha}(V=\square)+ o(q^{2g+1}) \\
&+ T_{1,C_1}(V=0)+T_{1,C_1}(V=\square)+T_{2,C_1}(V=0)+T_{2,C_1}(V=\square) \label{new} \\
&+O\Big(q^{2g+1} g^{9+\epsilon} \theta_0+ q^{2g+1} g^{4+\epsilon} \theta_0^{-5} \Big). \nonumber 
\end{align}
We rewrite
\begin{align}
T_{1,C_1}(V=0) &= \frac{q^{2g+1}}{\zeta_q(2)} \frac{1}{(2 \pi i)^3} \int_{C_1} \oint \oint \frac{1- z^{\alpha}}{(1-z) z^{4g+1}} \Bigg( \frac{\mathcal{H}(w,u)}{(1-qw)^{10} (1-\frac{qw}{z^2}) ( \frac{qw}{z^2} )^{2g}(1-q^2u)(q^2u)^y} \nonumber \\
&+ \frac{ z^{4g+2} \mathcal{H}(w,u)}{(1-qw)^{10} (1-qwz^2) ( qw)^{2g-1}(1-q^2u)(q^2u)^y}  \Bigg) \,\frac{du}{u} \, \frac{dw}{w} \, dz ,\label{tmain}
\end{align}
and
\begin{align}
T_{1,C_1}(V= \square) &= - \frac{q^{2g+1} }{(2 \pi i)^4} \int_{C_1} \oint \oint \oint \frac{1- z^{\alpha}}{(1-z) z^{4g+1}} \\
& \times  \Bigg( \frac{x^g (1-qwx)^4 \mathcal{B}(x,w,u)(1-\frac{1}{qx})}{(1-x)(1-qw)^4(1-q^2w^2x)^{10} (1- \frac{q^2w^2x}{z^2}) ( \frac{q^2w^2x}{z^2})^{2g} (1-q^2ux)(q^2ux)^y}  \nonumber  \\
& + \frac{z^{4g+2} x^g (1-qwz)^4 \mathcal{B}(x,w,u)(1-\frac{1}{qx})}{(1-x)(1-qw)^4(1-q^2w^2x)^{10}(1- q^2w^2xz^2) ( q^2w^2x)^{2g-1}(1-q^2ux)(q^2ux)^y} \Bigg) \, \frac{du}{u} \, \frac{dw}{w} \, dx \, dz  . \label{tsquare}
\end{align}
In the next lemma, we obtain asymptotic formulas for $T_{1,C_1}(V=0)$ and $T_{1,C_1}(V=\square)$.
\begin{lemma} \label{compt1}
Keeping the previous notation, we have the following.
\begin{align*}
T_{1,C_1}(V=0) &= \frac{q^{2g+1}}{9! \zeta_q(2)} \cdot \frac{\alpha}{2} \Bigg( c_9 (2g)^ 9   -  (2g)^8 \bq \binom{9}{1}  \Big( \frac{\cos(2 \pi \theta_0)}{2 \pi \theta_0} + \sum_{m=1}^{2g} m \sin( 4 \pi m \theta_0) (A(m)+B(m)) \Big) \\
& + c_8 (2g)^8   \Bigg) +O \Big(q^{2g+1} g^8 \theta_0 \alpha^3 +q^{2g+1}g^7 \theta^{-1} \alpha \Big),
\end{align*} with $c_9$ and $c_8$ given by \eqref{c9} and \eqref{c8} respectively, and
 $A(m),B(m)$ can be written down explicitly and are such that $A(m),B(m) = O (1/m^2)$ (see formula \eqref{aterm}).
Also 
\begin{align*}
T_{1,C_1}(V=\square) &= \frac{q^{2g+1}}{9! \zeta_q(2)} \cdot \frac{\alpha}{2}  \Bigg( f_9 g^9  + (2g)^8 \cq \binom{9}{1}  \Big( \frac{\cos(2 \pi \theta_0)}{2 \pi \theta_0}  + \sum_{m=1}^{g} m \sin( 4 \pi m \theta_0) (A(m)+B(m)) \Big) \\
& + f_8 g^8  \Bigg)  +O\Big(q^{2g+1} g^8 \theta_0 \alpha^3 +q^{2g+1}g^7 \theta_0^{-1} \alpha \Big),
\end{align*}
with $f_9$ and $f_8$ as in \eqref{f9} and \eqref{f8}, and $A(m)$ and $B(m)$ as before. 
\end{lemma}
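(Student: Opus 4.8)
The plan is to substitute the shifted--moment asymptotic \eqref{firstt}--\eqref{errorshift} into the integral representations \eqref{tmain} and \eqref{tsquare} and then to evaluate the resulting integrals by the same residue calculus used in Sections~\ref{maintermsec} and \ref{secmain}, this time retaining the dependence on $z$ as $z$ runs over the arc $C_1$. First I would perform the inner contour integrals: shift the $u$--contour past the pole at $u=\tfrac1{q^2}$ (respectively $u=\tfrac1{q^2x}$ in the $V=\square$ terms), and shift the $w$--contour past the pole of order $10$ at $w=\tfrac1q$ together with the secondary simple pole at $w=\tfrac{z^2}{q}$ arising from $(1-\tfrac{qw}{z^2})$ in \eqref{firstt} and \eqref{thirdt} (respectively at $w=\tfrac1{qz^2}$ from $(1-qwz^2)$ in \eqref{secondt} and \eqref{fourtht}); for the $V=\square$ terms one additionally shifts the $x$--contour past $x=1$, exactly as in Lemma~\ref{square}. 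Since $y=100\log g$ and $\alpha$ is on the scale of $\log g$, the integrals remaining on the enlarged contours are of size $O\big(q^{g(1+\epsilon)}\big)$ and are negligible. The residue at $w=\tfrac1q$ contributes a polynomial of degree $9$ in $2g$ whose coefficients are values and $w$--derivatives of $\mathcal{H}(w,\tfrac1{q^2})$ near $z=1$ (respectively of $\mathcal{B}(x,w,\tfrac1{q^2x})$ near $x=1$, $w=\tfrac1q$), while the secondary simple poles produce the terms carrying the oscillatory factors $z^{\pm4g}$.

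After these steps $T_{1,C_1}(V=0)$ and $T_{1,C_1}(V=\square)$ are one--dimensional integrals of an explicit rational function of $z$ over $C_1$, and I would parametrize $z$ on the arc (symmetric about $z=1$) of angular width $\theta_0$. The structural point is the functional--equation symmetry already present in \eqref{firstt}--\eqref{fourtht}: the two summands in each bracket combine so that, after multiplication by the Perron kernel $\tfrac{1-z^{\alpha}}{(1-z)z^{4g+1}}$, the powers $z^{4g}$ and $z^{4g+2}$ cancel against $z^{4g+1}$ and the net integrand becomes $\tfrac{1-z^{\alpha}}{1-z}$ times a rational function of $z$ with a single singularity at $z=1$ of order at most $10$ (reflecting the clustering of $w=\tfrac1q$, $w=\tfrac{z^2}{q}$ and $w=\tfrac1{qz^2}$ as $z\to1$). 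Since $\alpha\theta_0\to0$ I would expand $\tfrac{1-z^{\alpha}}{1-z}=\alpha+O(\alpha^2|1-z|)+O(\alpha^3|1-z|^2)$. The $\alpha$ term, integrated over the symmetric arc, sees only the semicircular indentation at $z=1$, which contributes one half of the residue there; this half--residue yields the $\theta_0$--independent main terms $\tfrac{\alpha}{2}c_9(2g)^9+\tfrac{\alpha}{2}c_8(2g)^8$ (respectively $\tfrac{\alpha}{2}f_9 g^9+\tfrac{\alpha}{2}f_8 g^8$), with $c_9,c_8,f_9,f_8$ read off from $\bq$, $\bprime$, $\bdoubleprime$ (respectively from $\cq$ and its $x$-- and $w$--derivatives) exactly as in the residue computation of Lemma~\ref{maintrunc}; see \eqref{c9}, \eqref{c8}, \eqref{f9}, \eqref{f8}. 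The $\theta_0$--dependent piece at the $(2g)^8$ level comes from the two endpoints of the arc: integrating the order--$2$ part of the $(2g)^8$--coefficient over $C_1$ produces $\binom{9}{1}\bq\,\tfrac{\cos(2\pi\theta_0)}{2\pi\theta_0}$ (respectively $\binom{9}{1}\cq\,\tfrac{\cos(2\pi\theta_0)}{2\pi\theta_0}$) from the boundary values of the antiderivative, while expanding the remaining analytic factor of the integrand as a convergent power series in $z$ and integrating term by term over the arc gives the tail $\sum_{m\ge1}m\sin(4\pi m\theta_0)(A(m)+B(m))$, the cut at $m=2g$ reflecting the degree of the $L$--polynomial and the coefficients $A(m),B(m)$ being (essentially) the Taylor coefficients of the logarithmic derivatives of $\mathcal{H}$, respectively $\mathcal{B}(x,w,\tfrac1{q^2x})$, at $z=1$, whence $A(m),B(m)=O(1/m^2)$; see \eqref{aterm}. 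The sign discrepancy between the two formulas is exactly the sign in front of \eqref{thirdt}--\eqref{fourtht}, and the replacement $\bq\mapsto\cq$ is the replacement $\mathcal{H}(w,\tfrac1{q^2})\mapsto\mathcal{B}(x,w,\tfrac1{q^2x})\rvert_{x=1}$ after the extra $x$--residue.

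It then remains to collect the errors. The $O(\alpha^2|1-z|)$ and $O(\alpha^3|1-z|^2)$ terms in the expansion of $\tfrac{1-z^{\alpha}}{1-z}$ lower the order of the singularity at $z=1$, so that integrating them over the arc of length $\asymp\theta_0$ against the $(2g)^8$--coefficient gives $O\big(q^{2g+1}g^8\theta_0\alpha^3\big)$; the $(2g)^7$--coefficient carries a singularity of order $3$ at $z=1$, and the part of it surviving integration over a symmetric arc has boundary contributions of size $\theta_0^{-1}$, giving $O\big(q^{2g+1}g^7\theta_0^{-1}\alpha\big)$; the coefficients of $(2g)^j$ for $j\le6$ and the integrals left on the enlarged contours are of strictly smaller order. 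Assembling all of this gives the two displayed asymptotics. I expect the main obstacle to be the second step: making the functional--equation cancellation of the $z^{\pm4g}$ factors explicit, and then, in the resulting arc integral, cleanly separating the part of the (up to order $10$) singularity at $z=1$ that produces the $\theta_0$--free main terms $c_9(2g)^9,c_8(2g)^8$ via the half--residue from the part producing the $\theta_0$--dependent corrections at the $(2g)^8$ level and from the part of size $O(g^7)$ that goes into the error, all while keeping the conventions compatible with Lemma~\ref{maintrunc} so that the $g^9\alpha$, $g^8\alpha^2$ and $g^8\alpha$ terms cancel when everything is assembled in \eqref{new}.
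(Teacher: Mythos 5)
Your proposal reverses the order of integration that the paper uses, and this reversal is the source of several difficulties. The paper's proof performs the $z$--integral over the arc $C_1$ \emph{first}, as the innermost integral: it expands $\bigl(1-\tfrac{qw}{z^2}\bigr)^{-1}=\sum_{m\ge 0}(qw)^m z^{-2m}$ and $\tfrac{1-z^{\alpha}}{1-z}=\sum_{j=0}^{\alpha-1}z^j$ and integrates $z^{j-2m-1}$ term by term over $C_1$, producing $\tfrac{1}{\pi}\sum_{j,m}(qw)^m\tfrac{\sin(2\pi\theta_0(2m\mp j))}{2m\mp j}$. Only then are the residues at $w=\tfrac1q$ (order $10$) and $u=\tfrac1{q^2}$ taken, which turns the $(qw)^m$ weights into polynomial values $Q(2g-m)$, and the whole thing is finished by Lemma~\ref{ak} for the sums $A(k,\theta_0)=\sum_{j,m}m^k\bigl(\tfrac{\sin(2\pi\theta_0(2m-j))}{2m-j}+\tfrac{\sin(2\pi\theta_0(2m+j))}{2m+j}\bigr)$. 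Because the geometric expansion eliminates the factor $\bigl(1-\tfrac{qw}{z^2}\bigr)$ before any $w$--residue is taken, the secondary pole at $w=z^2/q$ never appears and there is no singularity of the integrand at $z=1$ to worry about.

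Your plan instead takes the $w$--, $u$-- (and $x$--)residues first, in particular the secondary simple poles at $w=z^2/q$ and $w=1/(qz^2)$, and then integrates the resulting rational function of $z$ over the arc. This is where the real difficulty lies and where your outline has a genuine gap: as $z\to1$ those secondary poles coalesce with the order--$10$ pole at $w=\tfrac1q$, so the residues you extract blow up and the resulting $z$--integrand acquires a high--order singularity precisely at $z=1$, which sits \emph{on} the contour $C_1$. You assert that the functional--equation symmetry makes the surviving singularity low enough that a ``half--residue'' argument gives the $\theta_0$--independent terms $\tfrac{\alpha}{2}c_9(2g)^9+\tfrac{\alpha}{2}c_8(2g)^8$, but this cancellation of the higher--order parts of the singularity is not demonstrated; and the half--residue heuristic is only valid for a genuine simple pole (and has to be reconciled with the explicitly divergent term $\tfrac{\cos(2\pi\theta_0)}{2\pi\theta_0}$ appearing at the $(2g)^8$ level, which indicates that a second--order singularity does survive). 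Carrying this out rigorously would essentially force you to expand everything in powers of $z^{-2}$ and keep careful track of which powers survive the Perron kernel, which is exactly the paper's computation in the other order.

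A more concrete error: you identify $A(m),B(m)$ as ``Taylor coefficients of the logarithmic derivatives of $\mathcal{H}$, respectively $\mathcal{B}(x,w,\tfrac1{q^2 x})$''. This is wrong. The quantities $A(m),B(m)$ are defined in the Appendix (equations \eqref{firsteq}, \eqref{secondeq}, \eqref{aterm}) by
$\sum_{j=0}^{\alpha-1}\tfrac{1}{2m+j}=\tfrac{\alpha}{2m}+A(m)$ and $\sum_{j=0}^{\alpha-1}\tfrac{1}{2m-j}=\tfrac{\alpha}{2m}+B(m)$;
they are purely combinatorial corrections coming from the Euler--Maclaurin expansion of the partial harmonic sums (hence the Bernoulli numbers in \eqref{aterm}), they depend on $\alpha$ and $m$ only, and they are \emph{identical} in the $V=0$ and $V=\square$ formulas (note that the lemma uses $A(m)+B(m)$ with the same $A,B$ in both displays). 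They therefore cannot be $\mathcal{H}$-- or $\mathcal{B}$--specific Taylor coefficients. This misattribution signals that the mechanism producing the $\sum_m m\sin(4\pi m\theta_0)(A(m)+B(m))$ term in your outline is not the one actually at work; in the paper it arises from Lemma~\ref{ak} with $k=1$, i.e.\ from $A(1,\theta_0)$, after expanding $Q(2g-m)$ in powers of $m$.
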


\begin{proof}
Recall the expression \eqref{tmain} for $T_{1,C_1}(V=0)$, which is a sum of two terms. We rewrite
\begin{align*}
T_{1,C_1}(V=0) &= \frac{q^{2g+1}}{\zeta_q(2)} \frac{1}{(2 \pi i)^2} \oint \oint \frac{ \mathcal{H}(w,u)}{(1-qw)^{10} (qw)^{2g}(1-q^2u)(q^2u)^y} \Bigg[ \frac{1}{2 \pi i} \int_{C_1} \frac{1-z^{\alpha}}{z(1-z)(1-\frac{qw}{z^2})} \, dz + \\
& +  \frac{1}{2 \pi i} \int_{C_1} \frac{(1-z^{\alpha})zqw}{(1-z)(1-qwz^2)} \, dz \Bigg]  \, \frac{du}{u} \,  \frac{dw}{w} \label{t1}
\end{align*}
Now 
\begin{align*}
 \frac{1}{2 \pi i} \int_{C_1} \frac{1-z^{\alpha}}{z(1-z)(1-\frac{qw}{z^2})} \, dz &= \sum_{j=0}^{\alpha-1} \sum_{m=0}^{\infty} (qw)^m \frac{1}{2 \pi i} \int_{C_1} z^{j-2m-1} \, dz = \sum_{j=0}^{\alpha-1} \sum_{m=0}^{\infty} (qw)^m  \int_{- \theta_0}^{\theta_0} e^{2 \pi i \theta(j-2m)} \, d \theta\\
 & = \frac{1}{ \pi } \sum_{j=0}^{\alpha-1} \sum_{m=0}^{\infty} (qw)^m  \frac{ \sin (2 \pi \theta_0(2m-j))}{2m-j}.
\end{align*}
Similarly
\begin{align*}
   \frac{1}{2 \pi i} \int_{C_1} \frac{(1-z^{\alpha})zqw}{(1-z)(1-qwz^2)} \, dz &= \frac{1}{ \pi } \sum_{j=0}^{\alpha-1} \sum_{m=0}^{\infty} (qw)^{m+1}  \frac{ \sin (2 \pi \theta_0(2m+j+2))}{2m+j+2} \\
   &= \frac{1}{ \pi } \sum_{j=0}^{\alpha-1} \sum_{m=1}^{\infty} (qw)^{m}  \frac{ \sin (2 \pi \theta_0(2m+j))}{2m+j}   .
   \end{align*}
   Plugging these in the expression for $T_{1,C_1}(V=0)$, we get that
   \begin{align*}
   T_{1,C_1}(V=0)  &= \frac{q^{2g+1}}{\pi \zeta_q(2)} \frac{1}{(2 \pi i)^2}\oint \oint  \frac{ \mathcal{H}(w,u)}{(1-qw)^{10} (qw)^{2g}(1-q^2u)(q^2u)^y} \Bigg[ \sum_{j=0}^{\alpha-1} \sum_{m=0}^{2g} (qw)^m \frac{ \sin (2 \pi \theta_0(2m-j))}{2m-j} \\
   &+  \sum_{j=0}^{\alpha-1} \sum_{m=1}^{2g} (qw)^{m}  \frac{ \sin (2 \pi \theta_0(2m+j))}{2m+j} \Bigg] \, \frac{du}{u} \, \frac{dw}{w}.
   \end{align*}
   Note that in the expression above, there is a pole of order $10$ at $w=\frac{1}{q}$ and a simple pole at $u=\frac{1}{q^2}$. Similarly as before, we evaluate the residue at $w=\frac{1}{q}$ and $u=\frac{1}{q^2}$, and we get that
   \begin{align*}
   T_{1,C_1}(V=0) &= \frac{q^{2g+1}}{9! \pi \zeta_q(2) } \Bigg[ \sum_{j=0}^{\alpha-1} \sum_{m=0}^{2g}\frac{ \sin (2 \pi \theta_0(2m-j))}{2m-j} Q(2g-m) +  \sum_{j=0}^{\alpha-1} \sum_{m=1}^{2g}  \frac{ \sin (2 \pi \theta_0(2m+j))}{2m+j} Q(2g-m) \Bigg] \\
   &+O(q^{2g+1}\alpha),
   \end{align*}
 where $Q(x) = \sum_{i=0}^9 c_i x^i$ is a polynomial of degree $9$ whose coefficients can be computed explicitly. For example,
 \begin{equation}
 c_9 = \bq,
 \label{c9}
 \end{equation}
 \begin{equation}
 c_8 = 45 \bq - 9  \frac{\bprime}{q}. \label{c8} 
 \end{equation}
 Now $Q(2g-m)$ is a polynomial in $2g$ and $m$ of total degree less than or equal to $9$. We write
 \begin{align}
  T_{1,C_1}(V=0) &= \frac{q^{2g+1}}{9! \pi \zeta_q(2) } \sum_{r=0}^9 c_r \Bigg[ \sum_{j=0}^{\alpha-1} \sum_{m=0}^{2g}\frac{ \sin (2 \pi \theta_0(2m-j))}{2m-j} (2g-m)^r +  \sum_{j=0}^{\alpha-1} \sum_{m=1}^{2g}  \frac{ \sin (2 \pi \theta_0(2m+j))}{2m+j} (2g-m)^r \Bigg] \\
  &+O(q^{2g+1}\alpha).
 \end{align}
 By Lemma \ref{ak}, the term with $r \leq 7$ will be of size $q^{2g+1}g^7 \alpha$. Then we only need to consider $r =8$ and $r=9$. When $r=8$, write $(2g-m)^ 8 = \sum_{k=0}^8 (-1)^k \binom{8}{k} m^k (2g)^{8-k}$. If $k \geq 1$, by Lemma \ref{ak}, this term will be bounded by $q^{2g+1}g^7 \theta_0^{-1} \alpha$. So when $r=8$, we only consider $k=0$. Using Lemma \ref{ak} again, we have
 \begin{align*}
 T_{1,C_1}(V=0) &= \frac{q^{2g+1}}{9!\pi \zeta_q(2) } c_8 (2g)^8 \frac{\pi \alpha}{2} + \frac{q^{2g+1}}{9!\pi \zeta_q(2) }   c_9 \Bigg[ \sum_{j=0}^{\alpha-1} \sum_{m=0}^{2g}\frac{ \sin (2 \pi \theta_0(2m-j))}{2m-j} (2g-m)^9 \\
 &+  \sum_{j=0}^{\alpha-1} \sum_{m=1}^{2g}  \frac{ \sin (2 \pi \theta_0(2m+j))}{2m+j} (2g-m)^9 \Bigg] +O(q^{2g+1}g^7 \theta_0^{-1} \alpha).
 \end{align*}
 Write $(2g-m)^9 = \sum_{k=0}^9 (-1)^k \binom{9}{k} m^k (2g)^{9-k} $. We have to evaluate sums of the form
 $$  A(k, \theta_0)= \sum_{j=0}^{\alpha-1} \sum_{m=0}^{2g}  m^k  \frac{ \sin (2 \pi \theta_0(2m-j))}{2m-j}+\sum_{j=0}^{\alpha-1} \sum_{m=1}^{2g}  m^k  \frac{ \sin (2 \pi \theta_0(2m+j))}{2m+j},$$ for $ k \leq 9$. Then
 $$ T_{1,C_1}(V=0) = \frac{q^{2g+1}}{9! \zeta_q(2) } c_8 (2g)^8 \frac{\alpha}{2} + \frac{q^{2g+1}}{9!\pi \zeta_q(2) }   c_9 \sum_{k=0}^9 (-1)^k \binom{9}{k} (2g)^{9-k}  A(k,\theta_0)+ O(q^{2g+1}g^7 \theta_0^{-1} \alpha).$$
 Using Lemma \ref{ak}, we have
 \begin{align*}
  T_{1,C_1}(V=0) &= \frac{q^{2g+1}}{ 9!\zeta_q(2) } c_8 (2g)^8 \frac{ \alpha}{2} +  \frac{q^{2g+1}}{9!\zeta_q(2) }   c_9 (2g)^9 \frac{ \alpha}{2} - \frac{9q^{2g+1}}{ 9! \pi \zeta_q(2) } c_9  (2g)^8 \Bigg( \frac{ \alpha \cos(2 \pi \theta_0)}{2 \sin(2\pi \theta_0)} \\
  &+ \sum_{m=1}^{2g} m \sin(4 \pi m \theta_0)(A(m)+B(m)) \Bigg) -  \frac{q^{2g+1}}{ 9!  \pi \zeta_q(2) }  c_9 \frac{\alpha}{2} \sum_{k=0}^9 (2g)^{9-k} (-1)^k \binom{9}{k} \\
  &\Bigg( (2g)^{k -1} \frac{ \cos(8g \pi \theta_0)}{2 \pi \theta_0} - (2g)^{k-1} \sin(8g \pi \theta_0) \Bigg) + O\Big(q^{2g+1}(g^7 \theta_0^{-1} \alpha + g^8 \theta_0 \alpha^3) \Big).
   \end{align*}
   Using the fact that $\sum_{k=0}^9 (-1)^k \binom{9}{k}=0$, the formula for $T_{1,C_1}(V=0)$ in Lemma \ref{compt1} follows. 

Now we focus on $T_{1,C_1}(V=\square)$. We will skip some of the details, since they are similar to the ideas used when dealing with $T_{1,C_1}(V=0)$.  We have
\begin{align*}
T_{1,C_1}(V=\square) &= -\frac{q^{2g+1}}{(2 \pi i)^3} \oint \oint \oint \frac{x^g (1-qwx)^4 \mathcal{B}(x,w,u)(1-\frac{1}{qx})}{(1-x)(1-qw)^4(1-q^2w^2x)^{10} (q^2w^2x)^{2g}(1-q^2ux)(q^2ux)^y} \\
& \times \Bigg[ \sum_{j=0}^{\alpha-1} \sum_{m=0}^{g} (qw)^m \frac{ \sin (2 \pi \theta_0(2m-j))}{2m-j} +  \sum_{j=0}^{\alpha-1} \sum_{m=1}^{g} (qw)^{m}  \frac{ \sin (2 \pi \theta_0(2m+j))}{2m+j} \Bigg] \, \frac{du}{u} \, \frac{dw}{w} \, dx.
\end{align*}
Similarly as in section \ref{secmain}, we shift contours, and evaluate the pole at $w=1/q$. Then we encounter a simple pole at $u=\frac{1}{q^2x}$ and a pole of order $10$ at $x=1$. Computing the residue, we get that
\begin{align}
T_{1,C_1}(V=\square) &= -q^{2g+1} \frac{1}{3! 9!} \sum_{\substack{i+k \leq 9 \\ i \leq 3}} c_{ik}   \Bigg[ \sum_{j=0}^{\alpha-1} \sum_{m=0}^{g} \frac{ \sin (2 \pi \theta_0(2m-j))}{2m-j}(g-m)^k (2g-m)^i  \nonumber \\
&+  \sum_{j=0}^{\alpha-1} \sum_{m=1}^{g} \frac{ \sin (2 \pi \theta_0(2m+j))}{2m+j} (g-m)^k (2g-m)^i \Bigg] +O(q^{2g+1}\alpha)  \label{firstexpr} \\
&= -\frac{q^{2g+1}}{\zeta_q(2) 9!} \sum_{n+r \leq 9}  d_{nr} g^n    \Bigg[ \sum_{j=0}^{\alpha-1} \sum_{m=0}^{g} \frac{ \sin (2 \pi \theta_0(2m-j))}{2m-j} m^r +  \sum_{j=0}^{\alpha-1} \sum_{m=1}^{g} \frac{ \sin (2 \pi \theta_0(2m+j))}{2m+j} m^r \Bigg]  \label{secondexpr} \\
&+O(q^{2g+1}\alpha).  \nonumber
\end{align}
for some coefficients $c_{ik}$ which can be written down explicitly. For example, $c_{0,9} = 996  \zeta_q(2)^{-1} \cq , c_{1,8} = -2628  \zeta_q(2)^{-1} \cq, c_{2,7} = 2304  \zeta_q(2)^{-1} \cq, c_{3,6} = -672   \zeta_q(2)^{-1} \cq.$ Note that we can also write the expression above as a polynomial in $g$ and $m$, as in equation \eqref{secondexpr}. The coefficients $d_{nr}$ can be written down explicitly. For example, 
\begin{equation}
f_9:= d_{9,0} = -420 \cq, \label{f9}
\end{equation}
\begin{equation}
f_8 :=d_{8,0}=  828 \zeta_q(2)\cq  -10278  \cq +738  \cprimew + 828  \cprimex. \label{f8}
\end{equation}
 Similarly as for $T_{1,C_1}(V=0)$, when $n+r \leq 8$,  we get a term of size $O(g^7 \alpha)$, so we only consider the powers of $g$ greater than or equal to $8$. Using Lemma \ref{ak}, we compute a term of size $g^8 \alpha$, equal to $\frac{\alpha}{2 \cdot9!} g^8 f_8$. Now we can focus on those terms in \eqref{firstexpr} for which $i+k=9$. Then 
 \begin{align*}
T_{1,C_1}(V=\square) &= -q^{2g+1} \frac{\alpha}{2 \zeta_q(2) 9!} f_8 g^8 -q^{2g+1} \frac{1}{3! 9!}  \sum_{i=0}^3 c_{i, 9-i} \sum_{r=0}^i  \binom{i}{r}(-1)^r (2g)^{i-r} \sum_{e=0}^{9-i} (-1)^e g^{9-i-e} \binom{9-i}{e} \\
& \times  \Bigg[ \sum_{j=0}^{\alpha-1} \sum_{m=0}^{g} \frac{ \sin (2 \pi \theta(2m-j))}{2m-j} m^{r+e} +  \sum_{j=0}^{\alpha-1} \sum_{m=1}^{g} \frac{ \sin (2 \pi \theta(2m+j))}{2m+j} m^{r+e} \Bigg] +O(q^{2g+1}g^7 \theta_0^{-1} \alpha).
 \end{align*}
We use Lemma \ref{ak}, and consider the cases $e+r=0, e+r=1$ and $e+r \geq 2$ separately. When $e+r=1$, we get a term of size $g^8 \theta_0^{-1}$, and we can compute the coefficient of this term exactly from the coefficients $c_{i,9-i}$. We get
\begin{align*}
T_{1,C_1}(V=\square) &= -\frac{q^{2g+1}}{9! \zeta_q(2)} \Bigg[  \frac{\alpha}{2 } f_8 g^8 + \frac{ \alpha}{2 }  f_9 g^9  -  \frac{\alpha}{2}  (2g)^8  \cq \binom{9}{1}  \Bigg( \frac{\cos(2 \pi \theta_0)}{2 \pi \theta_0} \\
& + \sum_{m=1}^{g} m \sin( 4 \pi m \theta_0) (A(m)+B(m)) \Bigg)  -   \frac{\alpha \zeta_q(2)}{2 \cdot 3! 9!}  \sum_{i=0}^3 c_{i, 9-i} \sum_{r=0}^i  \binom{i}{r}(-1)^r (2g)^{i-r}\\
&\times \sum_{e=0}^{9-i} (-1)^e g^{9-i-e} \binom{9-i}{e} \Bigg( \frac{g^{e+r-1} \cos(4g \pi \theta_0)}{ 2 \pi \theta_0} - g^{e+r-1} \sin(4g \pi \theta_0) \Bigg) \Bigg] \\
& +O\Big(q^{2g+1} g^8 \theta_0 \alpha^3+ q^{2g+1}g^7 \theta_0^{-1} \alpha\Big).
\end{align*}
Since $\sum_{e=0}^{9-i} (-1)^e \binom{9-i}{e}=0$, the last term above cancels out.  This finishes the proof of Lemma \ref{compt1}.
   \end{proof}
   
  Now we go back to the proof of Theorem \ref{fourth}. We pick $\alpha= 100 \lfloor \log g \rfloor,$ and $\theta_0 = g^{-\frac{1}{2}}.$  Note that
  $$ \bq =  \cq = \prod_P \frac{(|P|-1)^6(|P|^5+7|P|^4-3|P|^3+6|P|^2-4|P|+1)}{|P|^{10}(|P|+1)}.$$
  Using equation \eqref{new}, and combining Lemmas \ref{maintrunc}, \ref{t2} and \ref{compt1}, we note that the terms of size $g^9 \alpha, g_8 \theta_0^{-1}, g^8 \alpha^2$ and $g^8 \alpha$ cancel out. We obtain an asymptotic formula with the $g^{10}$ and $g^9$ terms, and an error of size $q^{2g+1} g^{8+\frac{1}{2}+\epsilon} .$ We repeat this argument twice, which yields the asymptotic formula with the error of size $q^{2g+1} g^{7+\frac{1}{2}+\epsilon}$.

  \section{Upper bounds for moments of $L$-functions}
\label{upper}
Here, we prove Theorem ~\ref{upperbound}. The proof is very similar to Soundararajan's original bound on moments of the Riemann-zeta function. 
We first need the following.
\begin{lemma}
Let $\frac{1}{2} \leq \alpha \leq 1$ and $N$ be a positive integer. Then
$$ \log | L(\alpha+it,\chi_D)| \leq \frac{2g}{N+1} \log \Big( \frac{1+q^{-(\alpha-\frac{1}{2})(N+1)}}{1+q^{-2(N+1)}} \Big) + \Re \Big(\sum_{d(f) \leq N} \frac{ a_{\alpha}(d(f)) \chi_D(f) \Lambda(f)}{|f|^{\frac{1}{2} + it}} \Big) + O(1),$$
where the coefficient $a_{\alpha}(m)=0$ if $|m|>N$ and if $|m| \leq N$, it can be written down explicitly (see formula \eqref{coeff}). For $1 \leq |m| \leq N$, we have
$$ a_{\alpha}(m) = \frac{1}{|m| q^{|m|(\alpha-\frac{1}{2})}} -\frac{1}{|m|q^{2|m|}} +O \Big(  \frac{1}{(N+1) q^{(N+1)(\alpha-\frac{1}{2})}} \Big).$$ \label{lalfa}
\end{lemma}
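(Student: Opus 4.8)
The plan is to transplant Soundararajan's proof of his conditional upper bound for $\log|\zeta|$ \cite{soundupperbounds} to the function field world, where it becomes unconditional: the Riemann Hypothesis of Weil puts every zero of $\mathcal{L}(u,\chi_D)$ on the circle $|u|=q^{-1/2}$, equivalently all the $\theta_j$ in $\mathcal{L}(u,\chi_D)=\prod_{j=1}^{2g}(1-u\sqrt q\,e^{2\pi i\theta_j})$ are real. Writing $u=q^{-s}$ and $c=\tfrac{t\log q}{2\pi}$, and using the symplectic symmetry $\{\theta_j\}=\{-\theta_j\}$ to symmetrize in $\theta$, one has
$$\log|L(s,\chi_D)|=\Re\log\mathcal{L}(q^{-s},\chi_D)=\sum_{j=1}^{2g}\log\bigl|1-q^{\frac12-\alpha}e^{2\pi i(\theta_j-c)}\bigr|=\sum_{j=1}^{2g}H(\theta_j),$$
where $H(\theta):=\tfrac12\bigl(h(\theta-c)+h(\theta+c)\bigr)$ and $h(\psi):=\log|1-q^{\frac12-\alpha}e^{2\pi i\psi}|$. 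The function $H$ is real and even, with $\widehat{H}(0)=0$ and $\widehat{H}(k)=-\tfrac{q^{-(\alpha-1/2)|k|}}{2|k|}\cos(kt\log q)$ for $k\ne0$; it is smooth when $\alpha>\tfrac12$ (so its Fourier coefficients decay geometrically), and when $\alpha=\tfrac12$ it has only logarithmic downward spikes, which are harmless for an upper bound.

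The crux is to produce a real even trigonometric polynomial $P_N$ of degree $N$ that majorizes $H$ pointwise, has $\widehat{P_N}(k)$ essentially equal to $\widehat{H}(k)$ for $1\le|k|\le N$, and has small mean,
$$P_N\ge H,\qquad \widehat{P_N}(0)\le\frac1{N+1}\log\!\Bigl(\frac{1+q^{-(\alpha-1/2)(N+1)}}{1+q^{-2(N+1)}}\Bigr).$$
Bounding $\widehat{P_N}(0)$ — the ``excess'' of the majorant — amounts to an extremal estimate for the Taylor tail $\sum_{m>N}z^m/m$ on the circle $|z|=q^{\frac12-\alpha}\le1$ (the worst case being $z$ near the negative real axis); crucially this is uniform down to $\alpha=\tfrac12$, exactly where the crude bound $|\sum_{m>N}z^m/m|\le\sum_{m>N}|z|^m/m$ diverges but the true one does not. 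The small adjustments of the top Fourier coefficients of $P_N$ forced by optimality are what produce the secondary coefficient $-\tfrac1{mq^{2m}}$ and the error $O\!\bigl(\tfrac1{(N+1)q^{(N+1)(\alpha-1/2)}}\bigr)$ in $a_\alpha$. Granting such a $P_N$, the explicit formula (Lemma~\ref{explicit}) applied to $P_N$ together with $H\le P_N$ gives
$$\log|L(s,\chi_D)|=\sum_jH(\theta_j)\le\sum_jP_N(\theta_j)=2g\,\widehat{P_N}(0)-2\sum_{d(f)\le N}\widehat{P_N}(d(f))\,\frac{\chi_D(f)\Lambda(f)}{\sqrt{|f|}};$$
since $-2\widehat{P_N}(d(f))$ equals (to leading order) $\tfrac{q^{-(\alpha-1/2)d(f)}}{d(f)}\cos(d(f)t\log q)$, the $f$-sum is $\Re\sum_{d(f)\le N}\tfrac{a_\alpha(d(f))\chi_D(f)\Lambda(f)}{|f|^{1/2+it}}$ with $a_\alpha(m)$ given by \eqref{coeff}, whose asymptotic shape $a_\alpha(m)=\tfrac1{mq^{m(\alpha-1/2)}}-\tfrac1{mq^{2m}}+O\!\bigl(\tfrac1{(N+1)q^{(N+1)(\alpha-1/2)}}\bigr)$ for $1\le m\le N$ follows by expanding \eqref{coeff}. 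The case $\alpha=\tfrac12$ follows either directly (the construction is uniform) or by continuity in $\alpha$, and the hypothesis $\alpha\le1$ is used only to keep the argument of the logarithm in the main term $\ge1$ (for $\alpha>1$ the lemma is trivial).

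The real work is the extremal tail estimate of the previous paragraph: exhibiting the degree-$N$ majorant with mean $\le\tfrac1{N+1}\log\bigl(\tfrac{1+q^{-(\alpha-1/2)(N+1)}}{1+q^{-2(N+1)}}\bigr)$ in this sharp closed form, uniformly for $\tfrac12\le\alpha\le1$. This is precisely where the positivity coming from RH enters — it is the function field incarnation of the inequality $\Re\sum_\rho(s-\rho)^{-1}\ge0$, valid for $\Re s\ge\tfrac12$, that drives Soundararajan's argument — and the role played there by $\log|t|$ is played here by the genus, accounting for the $\tfrac{2g}{N+1}$ prefactor.
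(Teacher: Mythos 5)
Your plan --- Soundararajan's conditional upper bound transplanted to the unconditional function field setting, reduction of $\log|L|$ to a sum over the angles $\theta_j$, a one-sided degree-$N$ trigonometric approximation via Carneiro--Vaaler theory, and then the explicit formula (Lemma~\ref{explicit}) --- is the paper's plan, and the paper's proof does rest on \cite{carneirovaaler} and \cite{faicarneiro} just as you suggest. But you have left a genuine gap at exactly the step you yourself call ``the real work,'' and your explanation of where the $q^{-2}$-terms come from is wrong.

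You propose to majorize $H(\theta)=\tfrac12\bigl(h(\theta-c)+h(\theta+c)\bigr)$, $h(\psi)=\log|1-q^{1/2-\alpha}e^{2\pi i\psi}|$, by a degree-$N$ trigonometric polynomial with mean at most $\frac1{N+1}\log\frac{1+q^{-(\alpha-1/2)(N+1)}}{1+q^{-2(N+1)}}$, and you attribute both the $q^{-2(N+1)}$ in this denominator and the $-\frac1{mq^{2m}}$ in $a_\alpha(m)$ to ``small adjustments forced by optimality.'' They are not. The paper \emph{regularizes} before it approximates: its first step is to look at $\bigl|\Lambda(\alpha+it,\chi_D)/\Lambda(-\tfrac32+it,\chi_D)\bigr|$, which via the functional equation brings in $|L(\tfrac52+it,\chi_D)|$ and yields $\log|L(\alpha+it,\chi_D)|=-\tfrac12\sum_j f_2(\theta_j)+O(1)$ with $f_1(x)=\sum_{n\neq 0}\frac{e(nx)}{|n|}\bigl(q^{-(\alpha-\frac12)|n|}-q^{-2|n|}\bigr)$. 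The subtracted $q^{-2|n|}$ is put in by hand by the choice of the comparison point $5/2$ (note $5/2-1/2=2$); it is not produced by the extremal problem. This regularization is precisely what makes Carneiro--Vaaler applicable: $f_1=h_\mu$ for the measure $d\mu(\lambda)=\lambda^{-1}\bigl(e^{-\pi\lambda c_0^2}-e^{-\pi\lambda d_0^2}\bigr)\,d\lambda$, $c_0=\frac{(\alpha-\frac12)\log q}{2\pi}$, $d_0=\frac{\log q}{\pi}$, which is a \emph{finite} non-negative Borel measure on $(0,\infty)$ because the numerator vanishes to first order at $\lambda=0$ and $c_0\le d_0$ (i.e.\ $\alpha\le\tfrac52$). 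The measure $\lambda^{-1}e^{-\pi\lambda c_0^2}\,d\lambda$ that would correspond to your $h$ alone is not integrable at $\lambda=0$, so Theorem~6 of \cite{carneirovaaler} does not apply to $H$, and your claimed majorant --- whose mean is bounded by the logarithm of a \emph{ratio} --- has no source for its denominator in the $H$-only problem. Once the ratio step is inserted, the one-sided problem is for a \emph{minorant} of $f_1$ (which has a positive spike at the origin), as in the paper's Lemma~\ref{minorant}, and the explicit formula \eqref{coeff} for that minorant's Fourier coefficients, together with the computations in \cite{faicarneiro}, is what yields $a_\alpha(m)$ with its $-\frac1{mq^{2m}}$ term and the stated error. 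That step needs to be added before the rest of your sketch can be carried out.
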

The following is an easy corollary to the lemma above.
\begin{corollary}
We have the following bounds
$$ \log |L ( \tfrac{1}{2}+it,\chi_D)| \leq \frac{g \log 2}{\log_q g}+ O \Big(  \frac{g \log \log g}{(\log g)^2} \Big),$$ and for $\frac{1}{2}<\alpha \leq 1$, 
$$ \log |L(\alpha+it,\chi_D) | \ll \frac{g^{2-2\alpha}}{\log_q g}.$$
\end{corollary}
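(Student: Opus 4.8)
The plan is to deduce both inequalities directly from Lemma \ref{lalfa}, by making a near-optimal choice of the parameter $N$ and estimating the two pieces on the right-hand side by completely elementary means; no further appeal to the Riemann hypothesis is needed beyond what already entered Lemma \ref{lalfa}. The only arithmetic inputs are the trivial bound $|\chi_D(f)|\le 1$ and the exact identity $\sum_{f\in\mathcal{M}_m}\Lambda(f)=q^m$ (equivalently $\sum_{d\mid m}d\,\pi_q(d)=q^m$).

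For the bound on the critical line, I would take $N$ to be the nearest integer to $2\log_q g-2\log_q\log g$. With $\alpha=\tfrac12$, the first term in Lemma \ref{lalfa} equals $\frac{2g}{N+1}\log\!\big(\frac{2}{1+q^{-2(N+1)}}\big)=\frac{2g\log 2}{N+1}+O\!\big(g q^{-2N}\big)$, and since $q^{-2N}$ is a fixed negative power of $g$ the $O$-term is negligible; the remaining quantity $\frac{2g\log 2}{N+1}$, expanded via $\frac{2}{N+1}=\frac{1}{\log_q g}\big(1+\frac{\log_q\log g}{\log_q g}+O(\tfrac{1}{\log_q g})\big)$, contributes exactly $\frac{g\log 2}{\log_q g}+O\!\big(\frac{g\log\log g}{(\log g)^2}\big)$. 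For the Dirichlet polynomial, the explicit shape of $a_{1/2}(m)$ in Lemma \ref{lalfa} gives $a_{1/2}(m)\ll \tfrac1m+\tfrac1N$, so
\[
\Big|\sum_{d(f)\le N}\frac{a_{1/2}(d(f))\chi_D(f)\Lambda(f)}{|f|^{1/2+it}}\Big|\ \le\ \sum_{m\le N}\frac{|a_{1/2}(m)|}{q^{m/2}}\sum_{f\in\mathcal{M}_m}\Lambda(f)\ \ll\ \sum_{m\le N}\frac{q^{m/2}}{m}\ \ll\ \frac{q^{N/2}}{N}\ \ll\ \frac{g}{(\log g)^2},
\]
by the choice of $N$; this is absorbed into the stated error term. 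Adding the $O(1)$ from Lemma \ref{lalfa} completes the first estimate.

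For $\tfrac12<\alpha\le 1$ I would instead take $N$ the nearest integer to $2\log_q g$. Using $\log(1+x)\le x$, the first term of Lemma \ref{lalfa} is at most $\frac{2g}{N+1}\,q^{-(\alpha-1/2)(N+1)}\ll \frac{g}{\log_q g}\,g^{1-2\alpha}=\frac{g^{2-2\alpha}}{\log_q g}$. For the sum, the bound $a_\alpha(m)\ll \frac{1}{m\,q^{m(\alpha-1/2)}}+\frac{1}{N\,q^{N(\alpha-1/2)}}$ together with $\sum_{f\in\mathcal{M}_m}\Lambda(f)=q^m$ gives a contribution $\ll_\alpha \sum_{m\le N}\frac{q^{m(1-\alpha)}}{m}+\frac{q^{N(1-\alpha)}}{N}$, and since $q^{1-\alpha}>1$ this sum is dominated by its final term, namely $\ll_\alpha\frac{q^{N(1-\alpha)}}{N}\asymp\frac{g^{2-2\alpha}}{\log_q g}$. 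Combining the two estimates with the $O(1)$ from the lemma (which, for $\alpha$ bounded away from $1$, is itself $\ll g^{2-2\alpha}/\log_q g$) yields the second inequality.

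There is no genuine obstacle in this argument; the only point that requires care is choosing $N$ so that the ``Euler-product'' term of size $\asymp g/N$ (respectively $(g/N)q^{-(\alpha-1/2)N}$) and the ``prime-sum'' term of size $\asymp q^{N/2}/N$ (respectively $q^{N(1-\alpha)}/N$) are balanced, and then pushing the Taylor expansion of $2g/(N+1)$ one step further than in the usual application so as to isolate the secondary error $O\!\big(g\log\log g/(\log g)^2\big)$ in the critical-line bound.
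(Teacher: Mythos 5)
Your proposal is correct and takes essentially the same route as the paper: the paper's own proof of this corollary is just the one-line instruction to apply Lemma~\ref{lalfa} with $N = 2\log_q g - 4\log_q\log_q g$ when $\alpha=\tfrac12$ and $N = 2\log_q g$ when $\alpha>\tfrac12$, exactly the two-parameter balancing you carry out. Your slightly different choice $N \approx 2\log_q g - 2\log_q\log g$ in the first case is equally valid (both keep the prime-power sum $\ll g/(\log g)^2$, below the $g\log\log g/(\log g)^2$ secondary term), and your remark that the $O(1)$ is absorbed only for $\alpha$ bounded away from $1$ is a legitimate caveat that the paper leaves implicit.
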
 \label{ubalfa}
\begin{proof}[Proof of Corollary]
When $\alpha= \frac{1}{2}$, pick $N = 2 \log_q g - 4 \log_q \log_q g$ and use Lemma \ref{lalfa}. When $\frac{1}{2} < \alpha\leq 1$, pick $N = 2 \log_q g$, and the conclusion follows.
\end{proof}
\begin{proof}[Proof of Lemma \ref{lalfa}]
We look at
$$ \Big| \frac{\Lambda(\alpha+it,\chi_D)}{\Lambda ( - \frac{3}{2}+it,\chi_D)} \Big| ,$$ and using the expression \eqref{completed} for $\Lambda(s,\chi_D)$ and the functional equation \eqref{fecompleted}, we get that
$$ | L (\alpha+it,\chi_D)| = q^{5g-2g \alpha}  \Big| L ( \tfrac{5}{2}+it,\chi_D) \Big| \prod_{j=1}^{2g} \Big(   \frac{q^{2 \alpha-1}+1-2 q^{\alpha- \frac{1}{2}} \cos( 2 \pi \theta_j- t \log q)}{q^4+1-2q^2 \cos(2 \pi \theta_j - t \log q)} \Big)^{\frac{1}{2}}.$$
Since $|L ( \frac{5}{2}+it,\chi_D)| \sim 1$ and 
$$q^{2 \alpha-1}+1-2 q^{\alpha- \frac{1}{2}} \cos( 2 \pi \theta_j- t \log q) = (q^{\alpha-\frac{1}{2}} - 1)^2 + 4q^{\alpha- \frac{1}{2}} \sin^2 ( \pi \theta_j - \frac{t \log q}{2} ),$$ with a similar expression holding for the denominator, it follows that
\begin{equation} \log | L (\alpha+it,\chi_D)| = g \Big( \frac{5}{2} - \alpha \Big) \log q - \frac{1}{2} \sum_{j=1}^{2g} \log \Big(  \frac{ a^2+\sin^2(\pi \theta_j - \frac{t \log q}{2})}{b^2+ \sin^2(\pi \theta_j - \frac{t \log q}{2})}\Big) +O(1), \label{eq1}
\end{equation} where
$$a= \frac{q^2-1}{2q}, \, b= \frac{q^{\alpha- \frac{1}{2}}}{2q^{ \frac{\alpha}{2} - \frac{1}{4}}}.$$ 
Now let $$f(x) = \log \Big(  \frac{a^2+ \sin^2(x)}{b^2+\sin^2(x)} \Big),$$ 
$f_1(x) = f( \pi x)-(\frac{5}{2}-\alpha) \log q$ and $f_2(x) = f_1(x- \frac{t \log q}{2 \pi }) .$ Then
\begin{equation} \log |L(\alpha+it,\chi_D)| = - \frac{1}{2} \sum_{j=1}^{2g} f_2( \theta_j) + O(1). \label{id}
\end{equation}
Similarly as in \cite{faicarneiro}, we want to find an appropriate minorant for $f_1$ (and hence for $f_2$) and then use the explicit formula in Lemma \ref{explicit}. We first compute the Fourier series for $f_1$.
\begin{equation} \label{fourierf1}
f_1(x) = \sum_{n \neq 0} \frac{e(nx)}{|n|} \Big( \frac{1}{q^{(\alpha-\frac{1}{2})|n|}}- \frac{1}{q^{2|n|}} \Big).
\end{equation}

We prove the following lemma, which describes the properties of the minorant for $f_1$.
\begin{lemma} \label{minorant}
Let $N$ be a positive integer. If $r$ is a real valued trigonometric polynomial of degree $N$ such that $r(x) \leq f_1(x)$ for all $x \in \mathbb{R}/\mathbb{Z}$, then 
$$ \int_{\mathbb{R}/\mathbb{Z}} r(x) \, dx \leq -\frac{2}{N+1} \log \Big( \frac{1+q^{-(N+1)(\alpha- \frac{1}{2})}}{1+q^{-2(N+1)}} \Big),$$ with equality if and only if $r(x) = \sum_{|n| \leq N} \hat{r}(n) e(nx)$, with
$\hat{r}(n) $ given in equations \eqref{rn} and \eqref{r0}.
\end{lemma}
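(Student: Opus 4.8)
The plan is to prove the inequality from one carefully chosen quadrature rule and the equality statement from an explicit interpolation construction, exactly as in the extremal one-sided approximation theory of Chandee--Soundararajan \cite{soundfai} and Carneiro--Chandee \cite{faicarneiro}. First I would put $f_1$ in closed form. Writing $c=q^{-(\alpha-1/2)}$ and $d=q^{-2}$, the hypotheses $\tfrac12\le\alpha\le1$ and $q\ge 2$ prime give $0<d<c\le1$, and summing the geometric series in \eqref{fourierf1} yields
\[
f_1(x)=\log\frac{1-2d\cos 2\pi x+d^2}{1-2c\cos 2\pi x+c^2}=\log|1-de(x)|^2-\log|1-ce(x)|^2 ,
\]
a smooth (when $\alpha=\tfrac12$, mildly logarithmically singular at $x=0$) function all of whose Fourier coefficients $\hat f_1(n)=(c^{|n|}-d^{|n|})/|n|$ are strictly positive.

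For the inequality I would apply the $(N+1)$-point quadrature rule on the \emph{shifted} equally spaced nodes $x_j=\frac{2j+1}{2(N+1)}$, $0\le j\le N$. A one-line root-of-unity computation shows $\sum_{j=0}^{N}e(nx_j)$ equals $N+1$ for $n=0$ and $0$ for $0<|n|\le N$, hence $\frac1{N+1}\sum_{j=0}^{N}P(x_j)=\int_0^1 P$ for every trigonometric polynomial $P$ of degree $\le N$; applying this to $r$ and using $r(x_j)\le f_1(x_j)$ gives $\int_0^1 r\le\frac1{N+1}\sum_{j=0}^{N}f_1(x_j)$. The points $e(x_j)$ are precisely the $(N+1)$-st roots of $-1$, so $\prod_{j=0}^N(1-\rho\, e(x_j))=1+\rho^{N+1}$ for every $\rho$, and therefore $\sum_{j=0}^N\log|1-\rho e(x_j)|^2=2\log(1+\rho^{N+1})$ for $0<\rho<1$; taking $\rho=d$ and $\rho=c$ and subtracting gives $\sum_{j=0}^N f_1(x_j)=2\log\frac{1+d^{N+1}}{1+c^{N+1}}$, which is exactly $N+1$ times the claimed bound. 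I would also note that this is the optimal such rule: the shift puts the nodes away from $x=0$, where $f_1$ is largest, and in fact among all equally spaced $(N+1)$-point rules the minimum of $\frac1{N+1}\sum_j f_1(x_j)$ is attained precisely at these antisymmetric nodes (the zeros of $\cos\pi(N+1)x$, as opposed to the zeros $k/(N+1)$ of $\sin\pi(N+1)x$), so no other choice of rule improves the bound.

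For the equality case I would argue as follows. If $\int_0^1 r$ equals the bound, then $r(x_j)=f_1(x_j)$ for all $j$, and since $r\le f_1$ globally this forces first-order tangency $r'(x_j)=f_1'(x_j)$ at each node; thus $f_1-r\ge0$ must vanish to order $2$ at each of the $N+1$ points $x_j$. Conversely I would exhibit the extremal $r$ as the degree-$N$ trigonometric polynomial obtained by interpolating $f_1$ with this Hermite data, and then verify $f_1-r\ge0$ pointwise by writing $f_1(x)-r(x)=\cos^2\!\big(\pi(N+1)x\big)\,G(x)$ with $G$ continuous; the non-negativity of $G$ ultimately reduces to the positivity of all the $\hat f_1(n)$, which is precisely the input that makes the extremal-function construction of \cite{soundfai}, \cite{faicarneiro} go through (in the limiting case $c\to1$ this is Vaaler's extremal minorant of $-\log|2\sin\pi x|$). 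Expanding this $r$ in a Fourier series gives the closed forms for $\hat r(n)$, $|n|\le N$, recorded in the statement, and applying the quadrature identity of the previous step to this particular $r$ shows $\hat r(0)=\int_0^1 r$ equals the bound; uniqueness follows since any extremal minorant must satisfy the same $2(N+1)$ interpolation conditions.

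The main obstacle is the second step: producing the correct candidate $r$ and, especially, establishing the pointwise inequality $f_1-r\ge0$, for which one genuinely needs the structural input (positivity of $\hat f_1(n)$, equivalently the ratio-of-positive-polynomials form of $f_1$). By contrast the inequality itself is essentially forced once one spots that the right quadrature nodes are the zeros of $\cos\pi(N+1)x$ rather than those of $\sin\pi(N+1)x$.
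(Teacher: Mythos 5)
Your quadrature argument for the inequality is correct, complete, and more elementary than what the paper does, so let me first say that clearly. The paper proves Lemma \ref{minorant} by recognizing $f_1$ as a superposition $h_\mu(x)=\int_0^\infty p(\lambda,x)\,d\mu(\lambda)$ of the Carneiro--Vaaler building blocks, with $d\mu(\lambda)=\lambda^{-1}\bigl(e^{-\pi\lambda c^2}-e^{-\pi\lambda d^2}\bigr)\,d\lambda$, and then invokes Theorem~6 and Corollary~17 of \cite{carneirovaaler}, which simultaneously deliver the sharp bound \emph{and} the explicit extremal minorant $r_\mu$ with Fourier coefficients \eqref{rn}, \eqref{r0}. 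Your route instead observes that the $(N+1)$ nodes $x_j=\tfrac{2j+1}{2(N+1)}$ (the $(N+1)$-st roots of $-1$ on the circle) integrate trigonometric polynomials of degree $\le N$ exactly, so that $\int r=\tfrac1{N+1}\sum_j r(x_j)\le\tfrac1{N+1}\sum_j f_1(x_j)$, and then the closed form $f_1(x)=\log|1-de(x)|^2-\log|1-ce(x)|^2$ with $c=q^{-(\alpha-1/2)}$, $d=q^{-2}$ collapses the quadrature sum to $2\log\tfrac{1+d^{N+1}}{1+c^{N+1}}$ via $\prod_j(1-\rho e(x_j))=1+\rho^{N+1}$. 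This is a genuinely cleaner derivation of the inequality: it uses nothing beyond discrete orthogonality and a factorization, whereas the paper's route requires importing the Gaussian subordination machinery.

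The gap is in the equality half, and you flag it yourself. Lemma \ref{minorant} asserts not just the bound but that equality holds precisely for the $r$ with the \emph{specific} coefficients \eqref{rn}, \eqref{r0}; a proof therefore has to (i) produce a degree-$N$ trigonometric polynomial $r$ with $r\le f_1$ everywhere and $\int r$ equal to the bound, (ii) identify its Fourier coefficients with \eqref{rn}, \eqref{r0}, and (iii) show no other minorant achieves the bound. Your outline of (i) -- Hermite interpolation at the $x_j$ followed by a $\cos^2(\pi(N+1)x)\,G(x)$ factorization with $G\ge 0$ -- is the right shape, but it is not carried out, and it is nontrivial: the Hermite data at $N+1$ double nodes is $2N+2$ conditions for a $(2N+1)$-dimensional space, so solvability already requires an argument (the compatibility condition is exactly the quadrature identity), and the positivity of $G$ does not follow merely from $\hat f_1(n)>0$ without a construction like Vaaler's. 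Moreover, nothing in your sketch connects the interpolant to the theta-function expressions \eqref{rn} and \eqref{r0}, which is what the lemma actually asserts. That identification is precisely what the Carneiro--Vaaler Gaussian-subordination formalism automates, and it is the reason the paper takes that route: it pays the overhead of the general framework once in exchange for getting the explicit extremal coefficients and the uniqueness for free. So your proposal stands as a nice independent proof of the inequality, but as a proof of the full lemma it has a real hole in the equality case that the cited machinery is doing essential work to fill.
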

\begin{proof}
The lemma follows quite easily by combining ideas from \cite{carneirovaaler}. Keeping the notation in \cite{carneirovaaler}, let $G_{\lambda}(x) = e^{- \pi \lambda x^2}$, with $\lambda>0$, and let
$$ L(\lambda,z) = \Big( \frac{\cos \pi z}{\pi} \Big)^2 \Big[  \sum_{m=-\infty}^{\infty} \frac{G_{\lambda}(m+ \frac{1}{2})}{(z-m-\frac{1}{2})^2} + \sum_{n=-\infty}^{\infty} \frac{G_{\lambda}'(n+\frac{1}{2})}{z-n-\frac{1}{2}}\Big].$$ For $x \in \mathbb{R}/\mathbb{Z}$, let
$$l(\lambda,N,x) = \lambda^{\frac{1}{2}} (N+1)^{-1} \sum_{|n| \leq N} \hat{L} \Big(  \frac{\lambda}{(N+1)^2}, \frac{n}{N+1}\Big) e(nx).$$
Now for $\tau$ a complex number with $\Im (\tau)>0$, let $q=e^{\pi i \tau}$ and
$$ \theta_1(v,\tau)= \sum_{n=-\infty}^{\infty} q^{(n+\frac{1}{2})^2}e ((n+\frac{1}{2})v),$$
$$\theta_2(v,\tau) = \sum_{n=-\infty}^{\infty} (-1)^n q^{n^2} e(nv),$$ 
$$ \theta_3(v,\tau)= \sum_{n=-\infty}^{\infty} q^{n^2} e(nv).$$
Next we define $p:(0,\infty) \times \mathbb{R}/\mathbb{Z} \to \mathbb{R}$ by
$$ p(\lambda,x) = - \lambda^{-\frac{1}{2}} + \lambda^{-\frac{1}{2}} \sum_n e(nx) e^{-\pi \lambda^{-1} n^2} = -\lambda^{-\frac{1}{2}} + \lambda^{-\frac{1}{2}} \theta_3(x,i \lambda^{-1}).$$ By Theorem $6$ in \cite{carneirovaaler}, if $q(x)$ is a real valued trigonometric polynomial of degree at most $N$ with $q(x) \leq p(\lambda,x)$ for all $x \in \mathbb{R}/\mathbb{Z}$, then 
$$ \int_{\mathbb{R}/\mathbb{Z}} q(x) \, dx \leq - \lambda^{-\frac{1}{2}}+ \lambda^{-\frac{1}{2}} \theta_2( 0 , i \lambda^{-1} (N+1)^2),$$ with equality if and only if 
$ q(x) = - \lambda^{-\frac{1}{2}} + \lambda^{-\frac{1}{2}} l(\lambda, N,x).$ Now let $\mu$ be the finite non-negative Borel measure on $(0,\infty)$ defined by 
$$ d \mu(\lambda) = \frac{e^{- \pi \lambda c^2} - e^{-\pi \lambda d^2}}{\lambda} \, d \lambda,$$ with $0<c<d$,
and let $h_{\mu}(x) = \int_0^{\infty} p (\lambda,x) \, d \mu(\lambda).$ Then we compute the Fourier series
\begin{equation}
h_{\mu}(x) = \sum_{n \neq 0} \frac{e(nx)}{|n|} \Big(\frac{1}{e^{2 \pi c |n|}} - \frac{1}{e^{2 \pi d |n|}} \Big).
\label{hmu}
\end{equation} Define
$$r_{\mu}(x) = \sum_{|n| \leq N} \hat{r}_{\mu}(N,n) e(nx),$$ with
\begin{equation}
\hat{r}_{\mu}(N,n) = \int_0^{\infty} \frac{1}{N+1} \hat{L} \Big( \frac{\lambda}{(N+1)^2}, \frac{n}{N+1} \Big) \, d\mu(\lambda) \label{rn}
\end{equation} for $n \neq 0$ and 
\begin{equation}
\hat{r}_{\mu}(N,0) = \int_0^{\infty} \Big[- \lambda^{-\frac{1}{2}}+ \lambda^{-\frac{1}{2}} \theta_2( 0 , i \lambda^{-1} (N+1)^2)  \Big]\, d\mu(\lambda). \label{r0}
\end{equation}
Using Theorem $6$ and the ideas in Corollary $17$  from \cite{carneirovaaler}, it follows that $r_{\mu}(N,x)$ is the optimal minorant for $h_{\mu}(x)$ (in the sense that if $q(x) \leq h_{\mu}(x)$ for all $x \in \mathbb{R}/\mathbb{Z}$, then $\int_{\mathbb{R}/\mathbb{Z}} q(x) \, dx \leq \int_{\mathbb{R}/\mathbb{Z}} r_{\mu}(N,x) \, dx$.)

Now we pick $c= \frac{ (\alpha-\frac{1}{2}) \log q}{2 \pi }$ and $d= \frac{ \log q}{\pi}$ and using \eqref{hmu} together with \eqref{fourierf1}, we have $h_{\mu}(x) = f_1(x)$. Now Lemma \ref{minorant} follows by noting that 
$$\hat{r}_{\mu}(N,0) = \int_0^{\infty} \Big[- \lambda^{-\frac{1}{2}}+ \lambda^{-\frac{1}{2}} \theta_2( 0 , i \lambda^{-1} (N+1)^2)  \Big]\, d\mu(\lambda) = -\frac{2}{N+1} \log \Big( \frac{1+q^{-(\alpha-\frac{1}{2})(N+1)}}{1+q^{-2(N+1)}} \Big).$$
\end{proof}

Now we return to the proof of Lemma \ref{lalfa}. Let $r_{\alpha}$ denote the optimal minorant found in Lemma \ref{minorant}. Using the definition of $f_2$, equation \eqref{id}, the explicit formula in Lemma \ref{explicit} and Lemma \ref{minorant}, it follows that
$$  \log | L(\alpha+it,\chi_D)| \leq \frac{2g}{N+1} \log \Big( \frac{1+q^{-(\alpha-\frac{1}{2})(N+1)}}{1+q^{-2(N+1)}} \Big) + \Re \Big( \sum_{d(f) \leq N} \frac{ \hat{r}_{\alpha}(d(f)) \chi_D(f) \Lambda(f)}{|f|^{\frac{1}{2} + it}} \Big)+O(1).$$
Now we use equation \eqref{rn} to write down $\hat{r}_{\alpha}(m)$ explicitly. We use the fact that for $|t| \leq 1$,
$$ \hat{L} (\lambda, t) = (1-|t|) \theta_1(t,i \lambda)-(2 \pi)^{-1} \lambda \, \text{sgn}(t) \frac{ \partial \theta_1}{\partial t} (t, i \lambda),$$ which is proven in Theorem $4$ in \cite{carneirovaaler}. Then for $|m| \leq N$, 
\begin{align*}
\hat{r}_{\alpha}(m) &= \frac{1}{N+1} \int_0^{\infty} \Big[  \Big( 1- \frac{|m|}{N+1} \Big) \sum_{n=-\infty}^{\infty} e^{- \pi \lambda \frac{(n+\frac{1}{2})^2 }{(N+1)^2}} e \Big((n+\tfrac{1}{2}) \frac{m}{N+1} \Big)  - \frac{\lambda}{2 \pi (N+1)^2} \, \text{sgn}(m) \\
& \times \sum_{n=-\infty}^{\infty} e^{-\pi \lambda  \frac{(n+\frac{1}{2})^2 }{(N+1)^2}} e \Big((n+\tfrac{1}{2}) \frac{m}{N+1} \Big)  2 \pi i (n+\tfrac{1}{2} ) \Big] \frac{ e^{- \frac{\lambda (\alpha-\frac{1}{2})^2 (\log q)^2}{4 \pi}} - e^{- \frac{ \lambda (\log q)^2}{\pi}}}{\lambda} \, d \lambda.
\end{align*}
Using the computations in section $4$ of \cite{faicarneiro} we get that for $|m| \leq N$, 
\begin{align}
\hat{r}_{\alpha}(m) &= \sum_{k=0}^{\infty} (-1)^k (k+1) \Big[ \frac{1}{|m|+k(N+1)} \Big( q^{-(\alpha-\tfrac{1}{2})(|m|+k(N+1))}-q^{-2(|m|+k(N+1))} \Big) \nonumber \\
&-\frac{1}{(N+1)(k+2) - |m|} \Big( q^{(\alpha-\tfrac{1}{2})(|m|-(k+2)(N+1))} - q^{2(|m|-(k+2)(N+1))} \Big) \Big]. \label{coeff}
\end{align}
Now let $a_{\alpha}(m)=\hat{r}_{\alpha}(m)$. Note that the first summand above is 
$$ \frac{1}{|m|} \Big( \frac{1}{q^{|m|(\alpha-\tfrac{1}{2})}}-\frac{1}{q^{2|m|}} \Big) + O \Big(\frac{1}{(N+1)q^{|m|(\alpha-\frac{1}{2})}q^{(N+1)(\alpha- \tfrac{1}{2})}} \Big),$$ while the second is bounded by $\frac{1}{(N+1)q^{(N+1)(\alpha-\frac{1}{2})}}.$ Combining these two bounds finishes the proof of Lemma \ref{lalfa}.
\end{proof}

Before the proof of Theorem \ref{upperbound}, we also need the following lemma, which is the analog of Lemma $6.3 $ in \cite{soundyoung}.
\begin{lemma}
Let $k,y$ be integers such that $2 ky \leq 2g+1$. For any complex numbers $a(P)$, we have
$$ \sum_{D \in \mathcal{H}_{2g+1}} \left| \sum_{d(P) \leq y} \frac{ \chi_D(P) a(P)}{ \sqrt{|P|}} \right|^{2k} \ll q^{2g+1} \frac{ (2k)!}{k! 2^k} \left( \sum_{d(P) \leq y} \frac{|a(P)|^2}{|P|} \right)^k.$$ \label{mom}
\end{lemma}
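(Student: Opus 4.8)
The plan is to run the standard moment-expansion argument, which is the function-field analogue of Lemma~6.3 in \cite{soundyoung}. Since $\chi_D$ is $\{0,\pm1\}$-valued it is real, so writing $b(P)=a(P)/\sqrt{|P|}$ and expanding the $2k$-th power,
\[
\Bigl|\sum_{d(P)\le y}\chi_D(P)b(P)\Bigr|^{2k}=\sum_{P_1,\dots,P_{2k}}\chi_D(P_1\cdots P_{2k})\,b(P_1)\cdots b(P_k)\,\overline{b(P_{k+1})}\cdots\overline{b(P_{2k})},
\]
each $P_i$ running over monic irreducibles of degree at most $y$. Summing over $D$ and interchanging the finite summations,
\[
\sum_{D\in\mathcal H_{2g+1}}\Bigl|\sum_{d(P)\le y}\chi_D(P)b(P)\Bigr|^{2k}=\sum_{P_1,\dots,P_{2k}}b(P_1)\cdots\overline{b(P_{2k})}\sum_{D\in\mathcal H_{2g+1}}\chi_D(P_1\cdots P_{2k}).
\]
I would set $f=P_1\cdots P_{2k}$, note $d(f)\le 2ky\le 2g+1$, and split the outer sum according to whether $f$ is a perfect square (the diagonal) or not (the off-diagonal).

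For the diagonal, if $f$ is a square then $\chi_D(f)=\chi_D(\sqrt f)^2\in\{0,1\}$, so $|\sum_D\chi_D(f)|\le|\mathcal H_{2g+1}|<q^{2g+1}$, and the diagonal is at most $q^{2g+1}\sum_{P_1\cdots P_{2k}=\square}\prod_i|b(P_i)|$. Each irreducible must occur to an even power; the dominant contribution comes from the $(2k-1)!!=\frac{(2k)!}{2^kk!}$ perfect matchings of the $2k$ slots into $k$ pairs carrying equal primes, each matching contributing at most $\bigl(\sum_{d(P)\le y}|b(P)|^2\bigr)^k$ once distinctness of the chosen primes is dropped. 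The remaining configurations (some prime of multiplicity $\ge4$) are estimated the same way and absorbed, so the diagonal is $\ll q^{2g+1}\frac{(2k)!}{2^kk!}\bigl(\sum_{d(P)\le y}|a(P)|^2/|P|\bigr)^k$, which is the asserted bound.

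For the off-diagonal, with $f$ non-square I would invoke Lemma~\ref{sumd}, which expresses $\sum_{D\in\mathcal H_{2g+1}}\chi_D(f)$ through character sums $\sum_{h\in\mathcal M_m}\chi_f(h)$. Since $\chi_f$ is a non-principal character whose conductor divides $\rad(f)$, such a sum vanishes once $m$ exceeds the degree of that conductor, which is at most $d(f)\le 2ky\le 2g+1$; the surviving sums are then controlled by the Polya--Vinogradov bound of Lemma~\ref{pv}. Counting the polynomials $C\mid f^\infty$ for which the inner sum survives and summing the resulting bound over all $2k$-tuples $(P_1,\dots,P_{2k})$, the hypothesis $2ky\le 2g+1$ ensures this contribution is absorbed into the diagonal bound. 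Combining the two pieces yields the lemma.

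The main obstacle is the off-diagonal estimate: there are exponentially many tuples $(P_i)$, while the only pointwise control on $\sum_D\chi_D(f)$ for non-square $f$ is of Polya--Vinogradov strength, so one must exploit the structure (vanishing of the short character sums, and the fact that the surviving terms force $d(C)$ close to $g$) rather than bounding term by term. The condition $2ky\le 2g+1$ is precisely what keeps the conductors short enough that the relevant character sums either vanish or are negligible against $|\mathcal H_{2g+1}|$; arranging this uniformly in $k$, so that no $k$-dependence beyond the displayed factor $\frac{(2k)!}{2^kk!}$ leaks into the implied constant, is the delicate bookkeeping, carried out as in \cite{soundyoung}.
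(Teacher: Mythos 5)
Your argument follows the same route the paper indicates: expand the $2k$-th power, split the resulting sum over $f=P_1\cdots P_{2k}$ according to whether $f$ is a square, bound the diagonal combinatorially by the $(2k-1)!!$ matchings (which dominates even with higher multiplicities, giving exactly the stated factor), and handle the off-diagonal via Lemma~\ref{sumd} together with the vanishing of $\sum_{h\in\mathcal M_m}\chi_f(h)$ for $m\ge d(\mathrm{rad}\,f)$ and the Polya--Vinogradov bound of Lemma~\ref{pv}, using $2ky\le 2g+1$ to absorb the off-diagonal. This is precisely the function-field transcription of Lemma~6.3 of \cite{soundyoung} that the paper invokes, so the approach matches the paper's.
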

\begin{proof}
The proof is similar to the proof of Lemma $6.3$ in \cite{soundyoung} and uses the Polya-Vinogradov bound in Theorem \ref{pv}. 
\end{proof}
To prove Theorem ~\ref{upperbound}, we will need estimates for the frequency of large values of $|\mathcal{L}(u/\sqrt{q},\chi_D)|$. As $D$ varies over polynomials in $\mathcal{H}_{2g+1}$, we expect $\log \, |\mathcal{L}(u/\sqrt{q},\chi_D)|$ to be normally distributed with mean $\mathcal{M}(u,g)$ and variance $\mathcal{V}(u,g)$. Let
$$N(V,u,g) = \left| \{ D \in \mathcal{H}_{2g+1} \, | \log |\mathcal{L} ( u/\sqrt{q}, \chi_D )| \geq \mathcal{M}(u,g)+V \} \right|.$$ We will prove the following.
\begin{lemma}
With the same notation as above, if $\sqrt{\log g} \leq V \leq \mathcal{V}(u,g)$, then
$$N(V,u,g) \ll q^{2g+1} \exp  \left( -\frac{V^2}{2 \mathcal{V}(u,g)} \left( 1- \frac{8}{\log \log g} \right) \right);$$
if $\mathcal{V}(u,g) < V \leq \frac{\mathcal{V}(u,g)}{6} \log \log g$, then
$$ N(V,u,g) \ll  q^{2g+1} \exp  \left( -\frac{V^2}{2 \mathcal{V}(u,g)} \left( 1- \frac{4V}{\mathcal{V}(u,g) \log \log g} \right)^2 \right);$$
if $V > \frac{\mathcal{V}(u,g)}{6} \log \log g$, then
$$ N(V,u,g) \ll  q^{2g+1} \exp \left( - \frac{V \log V}{432} \right).$$
\label{frequency}
\end{lemma}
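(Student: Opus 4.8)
The plan is to follow Soundararajan's method for bounding the frequency of large values of $L$--functions (as in \cite{soundupperbounds}, cf.\ its use in \cite{soundyoung}), adapted to the symplectic family. First I would apply Lemma \ref{lalfa} with $\alpha=\tfrac12$ and $t=-\theta/\log q$ (so that $q^{-(1/2+it)}=u/\sqrt q=q^{-1/2}e^{i\theta}$) and a parameter $N=N(V,g)$ to be chosen. Separate the prime--power sum: the terms $f=P^{k}$ with $k\ge 3$ contribute $O(1)$; the terms $f=P^{2}$ give, using $a_{1/2}(2m)\Lambda(P^{2})=\tfrac12+O(q^{-2m})$ (from the formula for $a_\alpha$), the Prime Polynomial Theorem \eqref{ppt}, and the elementary estimate $\sum_{n\le M}\tfrac{\cos(2n\theta)}{n}=\log\min\{M,\tfrac1{2\theta}\}+O(1)$ (Appendix), the deterministic quantity $\mathcal M(u,g)-\Delta_{N}+O(1)$ where $\Delta_{N}:=\tfrac12\log\big(\min\{g,\tfrac1{2\theta}\}/\min\{N/2,\tfrac1{2\theta}\}\big)\ge 0$; and the terms $f=P$ give the Dirichlet polynomial $\mathcal P(D):=\sum_{d(P)\le N}\tfrac{\cos(d(P)\theta)\chi_D(P)}{\sqrt{|P|}}$ plus an error $\mathcal E(D)$ (from replacing $a_{1/2}(m)\,m$ by $e^{im\theta}$), which by Lemma \ref{mom} has $2k$--th moment $\ll q^{2g+1}\tfrac{(2k)!}{k!2^{k}}\big(O(1)\big)^{k}$, i.e.\ bounded variance. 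Thus on the event $\{\log|\mathcal L(u/\sqrt q,\chi_D)|\ge\mathcal M(u,g)+V\}$ one has
$$\mathcal P(D)+\mathcal E(D)\ \ge\ V+\Delta_{N}-\frac{2g\log 2}{N+1}-O(1).$$

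Next I would choose $N$ so that $\tfrac{2g\log 2}{N+1}=o(V)$ --- roughly $N\asymp g(\log\log g)/V$ in the first regime, which keeps $N\le g$ since $V\ge\sqrt{\log g}$ --- so the right--hand side is $(1-o(1))V$ (the bonus $\Delta_{N}\ge 0$ only helps). By quadratic--character orthogonality and Lemma \ref{pv}, $\mathbb E\,\mathcal P(D)=o(1)$, and since $\cos^{2}(d(P)\theta)=\tfrac12(1+\cos(2d(P)\theta))$, $\mathrm{Var}\,\mathcal P(D)=\tfrac12\log N+\tfrac12\log\min\{N,\tfrac1{2\theta}\}+O(1)=\mathcal V(u,g)(1+o(1))$ for this $N$ --- here the split of $\cos^{2}$ reproduces $\mathcal V=\mathcal M+\tfrac12\log g$, with the "$1$" contributing $\tfrac12\log g$ and the "$\cos(2\cdot)$" contributing (up to $o(1)$) $\mathcal M$, exactly as in the prime--square mean. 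Since Lemma \ref{mom} requires $2k\cdot(\text{degree cutoff})\le 2g+1$, one cannot take high moments of $\mathcal P$ as a whole, so I would split $\mathcal P=\mathcal P_{1}+\mathcal P_{2}$, where $\mathcal P_{1}$ is the part of degree $\le N_{1}\asymp g\mathcal V/V^{2}$ (permitting $k$ up to $\asymp g/N_{1}\asymp V^{2}/\mathcal V$) and $\mathcal P_{2}$ is the remaining part, decomposed dyadically into $\asymp\log\log g$ blocks, each of variance $O(1)$ and each still admitting moments up to $\asymp g/N\asymp V/\log\log g$. On the event above, at least one of $\mathcal P_{1}\ge(1-o(1))V$, $\mathcal E(D)\ge o(V)$, or some block $\ge o(V)$ must hold.

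Each such probability I would bound by Markov's inequality applied to the $2k$--th moment from Lemma \ref{mom}, optimizing $k$ and using Stirling for $\tfrac{(2k)!}{k!2^{k}}$. In regime $1$ ($\sqrt{\log g}\le V\le\mathcal V$) the optimal $k\approx V^{2}/(2\mathcal V)$ lies within the admissible range $\asymp V^{2}/\mathcal V$ for $\mathcal P_{1}$, yielding $N(V,u,g)\ll q^{2g+1}\exp\!\big(-\tfrac{V^{2}}{2\mathcal V(u,g)}(1-\tfrac{8}{\log\log g})\big)$; the $\asymp\log\log g$ block contributions (variance $O(1)$, moments up to $\asymp V/\log\log g$) and $\mathcal E(D)$ are of strictly smaller order even after summing. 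In regime $2$ ($\mathcal V<V\le\tfrac{\mathcal V}{6}\log\log g$) the constraint $2kN_{1}\le 2g+1$ binds; taking $k$ maximal gives the exponent $\tfrac{V^{2}}{2\mathcal V}(1-\tfrac{4V}{\mathcal V\log\log g})^{2}$. In regime $3$ ($V>\tfrac{\mathcal V}{6}\log\log g$) one shrinks $N$ (tolerating a larger fraction of $V$ in the cost term), enlarging the admissible moments, and optimizing produces $\exp(-\tfrac{V\log V}{432})$.

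The content is entirely Soundararajan's; the work is bookkeeping. The main obstacle is choosing $N$, $N_{1}$, the dyadic subdivision, the split of $V$, and the moment orders as explicit functions of $V$ and $g$ so as to land precisely on the three thresholds and the constants $1-\tfrac{8}{\log\log g}$, $(1-\tfrac{4V}{\mathcal V\log\log g})^{2}$ and $\tfrac1{432}$, while carrying the mean $\mathcal M(u,g)$ and the $\min\{g,\tfrac1{2\theta}\}$--dependence consistently through the decomposition and verifying that every discarded piece --- the $k\ge 3$ prime powers, the $a_{1/2}$--corrections inside $\mathcal E(D)$, and the non--diagonal defect in $\mathbb E[\chi_D(P_{1})\chi_D(P_{2})]$ controlled via Lemma \ref{pv} --- is genuinely $o$ of the relevant scale. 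The subtlest point is that Lemma \ref{mom} gives the high--degree part of $\mathcal P$ only a low--moment bound, forcing $N$ to stay well below $g$; this is reconciled with the requirement $\mathrm{Var}\,\mathcal P_{1}=(1-o(1))\mathcal V(u,g)$ by the bonus term $\Delta_{N}$ together with the fact that shrinking $N$ below $g$ costs only a $\log\log g$--sized loss in the variance.
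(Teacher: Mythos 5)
This is essentially the paper's proof: Soundararajan's method, with Lemma \ref{lalfa} giving the deterministic upper bound on $\log|\mathcal{L}|$, the separation of the prime-power sum into $f=P$, $P^2$, $P^{\geq 3}$, the reduction to a Dirichlet polynomial over primes of degree $\leq N$, a split of that polynomial into a low-degree piece and a high-degree tail, and Markov combined with the $2k$-th moment bound of Lemma \ref{mom} with $k$ optimized in each regime. Your bookkeeping differs from the paper's in a few inessential ways: you retain the nonnegative defect $\Delta_N$ from the prime-square sum, where the paper is content to bound that contribution by $\mathcal{M}(u,g)+\tfrac{g}{N+1}+O(1)$; your low/high cutoff is $N_1 \asymp g\mathcal{V}/V^2$, where the paper uses $N_0=N/\log g$ with $N$ determined by $\tfrac{g}{N+1}=\tfrac{V}{A}$ for a regime-dependent $A$; and you further decompose the high-degree tail dyadically into $\asymp\log\log g$ blocks of variance $O(1)$, whereas the paper treats the whole tail $S_2$ at once as a single Dirichlet polynomial of variance $\asymp\log\log g$ and applies Lemma \ref{mom} with $k=\lfloor V/A\rfloor$ directly. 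The dyadic subdivision is an unnecessary complication; after the union bound over blocks it recovers a bound of the same shape as the paper's, but with more work.

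One conceptual imprecision worth flagging: in regime 2 you attribute the factor $(1-\tfrac{4V}{\mathcal{V}\log\log g})^2$ to the constraint $2kN_1\leq 2g+1$ ``binding'' and forcing $k$ below the Gaussian optimum. That is not where the factor comes from. With either your cutoff or the paper's, the Gaussian-optimal $k\approx V_1^2/(2\mathcal{V})$ is admissible throughout regime 2. The factor instead arises from the reduction $V\to V_1=V(1-\tfrac{4}{A})$, accounting for the deterministic main term $\asymp g/(N+1)=V/A$ from Lemma \ref{lalfa} and the portion $V/A$ conceded to the high-degree tail $S_2$; the paper's choice $A=\tfrac{\mathcal{V}}{V}\log\log g$ makes $V/A=V^2/(\mathcal{V}\log\log g)$, which squared inside $V_1^2/(2\mathcal{V})$ yields exactly the stated exponent. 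With this correction the plan as you describe it goes through.
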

\begin{proof}[Proof of Theorem ~\ref{upperbound}]
Using the lemma above, we can prove the upper bound for the $k^{\text{th}}$ moment as follows. Note that
\begin{equation}
 \sum_{D \in \mathcal{H}_{2g+1}} | \mathcal{L}(u/\sqrt{q},\chi_D)|^k = \int_{-\infty}^{\infty} \exp ( kV + k \mathcal{M}(u,g)) d N(V,u,g) = k \int_{-\infty}^{\infty} \exp(kV+k \mathcal{M}(u,g)) N(V,u,g) \, dV.
 \label{obser}
 \end{equation}
  We use Lemma ~\ref{frequency} in the form
$$ N(V,u,g) \ll 
\begin{cases}
q^{2g+1} g^{o(1)} \exp \left( - \frac{V^2}{2 \mathcal{V}(u,g)} \right) & \mbox{ if } V \leq 8 k \mathcal{V}(u,g) \\
q^{2g+1} g^{o(1)} \exp(-4kV) & \mbox{ if } V > 8 k \mathcal{V}(u,g) .
\end{cases}
$$
Using these bounds in ~\eqref{obser}, we get the desired upper bound.
\end{proof}
\begin{proof}[Proof of Lemma ~\ref{frequency}]
For $N$ as in Lemma \ref{lalfa}, let $\frac{g}{N+1} = \frac{V}{A}$ and $N_0= \frac{N}{\log g},$ where 
$$ A = 
\begin{cases}
 \log \log g & \mbox{ if } V \leq \mathcal{V}(u,g) \\
 \frac{\mathcal{V}(u,g)}{V} \log \log g & \mbox{ if } \mathcal{V}(u,g) < V \leq \frac{\mathcal{V}(u,g)}{6} \log \log g \\
 6 & \mbox{ if } V > \frac{\mathcal{V}(u,g)}{6} \log \log g.
\end{cases}
$$
We use Proposition ~\ref{lalfa}, and then
$$ \log \Big| L ( \tfrac{u}{\sqrt{q}} ,
\chi_D) \Big| \leq \frac{2g}{N+1} \log 2 +  \sum_{d(f) \leq N} \frac{a_0(d(f)) \chi_D(f) \Lambda(f) \cos( \theta d(f))}{  \sqrt{|f|}} +O(1),$$
where $$a_0(d(f)) = \frac{1}{d(f)} - \frac{1}{d(f)|f|^2} + O \Big( \frac{1}{N+1}\Big).$$
Notice that for the sum over primes, the contribution from $f=P^r$ with $r \geq 3$ is bounded by $O(1)$. Using Lemma \ref{ubvar} and the Prime Polynomial Theorem in \eqref{ppt}, the contribution from $f=P^2$ is (up to an error of size $O( \log \log g)$ coming from those $P$ with $P|D$):
\begin{align*}
\sum_{d(P) \leq \frac{N}{2}} \frac{a_0(2 d(P)) d(P) \cos(2 \theta d(P))}{|P|} \leq \mathcal{M}(u,g)+\frac{g}{N+1}+O(1).
\end{align*}
Combining the above, it follows that
$$  \log \Big| L ( \frac{u}{\sqrt{q}} ,
\chi_D) \Big| \leq  \frac{3g}{N+1} + \mathcal{M}(u,g) + \sum_{d(P) \leq N} \frac{a_0(d(P)) \chi_D(P) d(P) \cos(\theta d(P))}{\sqrt{|P|}} .$$
If $D$ is such that $\log |\mathcal{L}(u/\sqrt{q},\chi_D)| \geq \mathcal{M}(u,g) + V$, then
$$  \sum_{d(P) \leq N} \frac{a_0(d(P)) \chi_D(P) d(P) \cos(\theta d(P))}{\sqrt{|P|}}   \geq V-\frac{3g}{N+1} = V \left(1- \frac{3}{A} \right).$$ Let $S_1$ be the sum above truncated at $d(P) \leq N_0$ and $S_2$ be the sum over primes $P$ with $N_0 < d(P) \leq N$. Then either $S_2 \geq \frac{V}{A}$ or $S_1 \geq V \left(1 - \frac{4}{A} \right):=V_1$. Let $\mathcal{F}_1 = \{ D \in \mathcal{H}_{2g+1} \, | S_1 \geq V_1 \}$ and $\mathcal{F}_2 = \{ D \in \mathcal{H}_{2g+1} \, | S_2 \geq V/A \}$.

If $D \in \mathcal{F}_2$, then using Lemma ~\ref{mom} and Markov's inequality, it follows that
$$|\mathcal{F}_2| \ll q^{2g+1} \Big( \frac{A}{V} \Big)^{2k}  \frac{ (2k)!}{k! 2^k} \Big( \sum_{ N_0< d(P) \leq N} \frac{|a(P)|^2}{|P|} \Big)^k,$$ for any $k$ such that $k \leq V/A$, where
$$a(P) = a_0(d(P)) d(P) \cos( \theta d(P))= \cos(\theta d(P)) - \frac{\cos(\theta d(P))}{|P|^2} + O \Big( \frac{d(P)}{N+1}\Big).$$ Using the fact that $|a(P)| \ll 1$ and picking $k = \left \lfloor{\frac{V}{A}}\right \rfloor  $,
\begin{equation}
|\mathcal{F}_2| \ll q^{2g+1} \left( \frac{A}{V} \right)^{2k} \left( \frac{2k}{e} \right)^k ( \log \log g)^k \ll q^{2g+1} \exp \left( - \frac{V}{2A} \log V \right). \label{measf2}
\end{equation}
If $D \in \mathcal{F}_1$, then for any $k \leq g/N_0$, 
$$|\mathcal{F}_1| \ll q^{2g+1} \frac{1}{V_1^{2k}}  \frac{ (2k)!}{k! 2^k} \left( \sum_{ d(P) \leq N_0} \frac{|a(P)|^2}{|P|} \right)^k,$$  with $a(P)$ as before. Now we use the Prime Polynomial Theorem \eqref{ppt}, the expression for $a(P)$ and Lemma ~\ref{ubvar} to get that
\begin{align*}
|\mathcal{F}_1|&  \ll q^{2g+1} \frac{1}{V_1^{2k}}   \left( \frac{2k}{e} \right)^k \left( \frac{1}{2} \sum_{n=1}^g \frac{1}{n} + \frac{1}{2} \sum_{n=1}^g \frac{\cos(2n \theta)}{n} \right)^k \ll q^{2g+1}   \left( \frac{2k \mathcal{V}(u,g)}{e V_1^2} \right)^k .
\end{align*}
If $V \leq \mathcal{V}(u,g)^2$, we pick $k =  \left \lfloor{\frac{V_1^2}{2 \mathcal{V}(u,g)}}\right \rfloor$ and if $V > \mathcal{V}(u,g)^2$, we pick $k = \left \lfloor{ 10V }\right \rfloor$.
Then 
\begin{equation}
 | \mathcal{F}_1 | \ll q^{2g+1} \exp \left( - \frac{V_1^2}{2 \mathcal{V}(u,g)} \right) + q^{2g+1} \exp (-4V \log V). 
 \label{measf1}
 \end{equation}
 Combining the bounds ~\eqref{measf2} and ~\eqref{measf1}, Proposition ~\ref{frequency} follows.
\end{proof}

Now we go back to the proof of Lemma ~\ref{shiftedupperb}, which is very similar to the proof of Theorem \ref{upperbound}, so we will skip some of the details.
\begin{proof}[Proof of Lemma ~\ref{shiftedupperb}]
Let $N(V)= | \{D \in \mathcal{H}_{2g+1} \, | \log | \mathcal{L}(w,\chi_D) \cdot \ldots \cdot \mathcal{L}(wu^{l-1},\chi_D) | \geq \mathcal{M}(w,g) + V \}|$.
Let $\mathcal{V} = \mathcal{V}(w,g) + \frac{3}{ \gamma^2}$. Notice that from Corollary \ref{ubalfa}, it is enough to consider $V \leq \frac{cg}{\log_q g},$ for some constant $c$.
We will prove the following.
If $V \leq \mathcal{V}$, then
\begin{equation}
N(V) \ll q^{2g+1} \exp \left( - \frac{V^2}{2 \mathcal{V}} \left(1-\frac{14}{\log \mathcal{V}} \right) \right);
\label{nv1}
\end{equation}
if $\mathcal{V} < V <  \frac{ \mathcal{V} \log \mathcal{V}}{8}$, then
\begin{equation}
N(V) \ll q^{2g+1} \exp \left( - \frac{V^2}{2 \mathcal{V}} \left( 1- \frac{7V}{ \mathcal{V} \log \mathcal{V}} \right)^2 \right);
\label{nv2}
\end{equation}
if $V >  \frac{ \mathcal{V} \log \mathcal{V}}{8}$, then
\begin{equation}
N(V) \ll q^{2g+1} \exp \left( - \frac{V \log V}{2048} \right).
\label{nv3}
\end{equation} Using these bounds, we get that
$$ \sum_{D \in \mathcal{H}_{2g+1}}   \log | \mathcal{L}(w,\chi_D) \cdot \ldots \cdot \mathcal{L}(wu^{l-1},\chi_D) |^k \ll q^{2g+1} g^{\epsilon} \exp \left( k ( \mathcal{M}(w,g)+ \tfrac{1}{\gamma}) + \frac{k^2}{2} (\mathcal{V}(w,g) + \tfrac{3}{\gamma^2} ) \right).$$ Since $1/\gamma^2 = o (\log g)$, Lemma ~\ref{shiftedupperb} follows.

Now to prove the bounds in ~\eqref{nv1}, ~\eqref{nv2} and ~\eqref{nv3}, write $w= \frac{1}{\sqrt{q}} e^{i \theta}$ and $u=\frac{1}{q^{\gamma}} e^{i \beta}$. Note that $\mathcal{L}(wu^j,\chi_d) = L(\tfrac{1}{2}+j \gamma-\frac{i(\theta+j \beta)}{\log q}).$ Using Lemma \ref{lalfa} for each of the $L$--functions, it follows that
\begin{align*}
& \log | \mathcal{L}(w,\chi_D) \cdot \ldots \cdot \mathcal{L}(wu^{l-1},\chi_D) | \leq \frac{2g}{N+1} \log 2 + \frac{2g}{N+1} \sum_{j=1}^{l-1} \log (1+q^{-(N+1) j \gamma})\\
& + \sum_{d(f) \leq N} \sum_{j=0}^{l-1} \frac{ a_j(d(f)) \chi_D(f) \Lambda(f)}{\sqrt{|f|}} \cos((\theta+j \beta)d(f)) + O (\gamma^{-1}),
\end{align*}
where for ease of notation, $a_j(m) = a_{\frac{1}{2}+j \gamma}(m)$ as in Lemma \ref{lalfa}.  Using the fact that $\log (1+x) < x$ for $x>0$, we get that
\begin{align} \label{ineq4}
\log | \mathcal{L}(w,\chi_D) \cdot \ldots \cdot \mathcal{L}(wu^{l-1},\chi_D) | &\leq \frac{2g}{N+1} (\log 2+1) +  \sum_{d(f) \leq N} \sum_{j=0}^{l-1} \frac{ a_j(d(f)) \chi_D(f) \Lambda(f)}{\sqrt{|f|}} \cos((\theta+j \beta)d(f))\\
& + O(\gamma^{-1}). \nonumber
\end{align}
Now let $\frac{g}{N+1}=\frac{V}{A}$, where $A$ is defined by
$$ A= 
\begin{cases}
\log \mathcal{V} & \mbox{ if } V \leq \mathcal{V} \\
\frac{\mathcal{V} \log \mathcal{V}}{V}  & \mbox{ if } \mathcal{V} < V \leq \frac{ \mathcal{V} \log \mathcal{V}}{8} \\
8 & \mbox{ if }  \frac{ \mathcal{V} \log \mathcal{V}}{8} < V,
\end{cases}
$$ and $\frac{N}{N_0} = \log g$. 

Using the asymptotics for $a_j(m)$ from Lemma \ref{lalfa}, note that the contribution from $f=P^k$ with $k \geq 3$ is of size $O(1)$. The contribution from square polynomials $f=P^2$ is equal to (up to an error of size $O(\gamma^{-1} \log \log g)$ coming from $P|D$)
\begin{align*}
\sum_{d(P) \leq \frac{N}{2}} \sum_{j=0}^{l-1} \frac{a_j(2d(P)) d(P)}{|P|} \cos((\theta+j \beta) 2 d(P)),
\end{align*} and again using the asymptotics for $a_j(m)$, it follows that the contribution from primes square is equal to
\begin{align*}
& \frac{1}{2} \sum_{d(P) \leq \frac{N}{2}} \frac{\cos(2 \theta d(P))}{|P|} + \frac{1}{2} \sum_{d(P) \leq \frac{N}{2}} \frac{1}{|P|} \sum_{j=1}^{l-1}\frac{\cos((\theta+j \beta)2d(P))}{|P|^{2 j \gamma}}+O(\gamma^{-1}) \\
&\leq \mathcal{M}(w,g) + \frac{g}{N+1} + \frac{1}{2}  \sum_{d(P) \leq \frac{N}{2}} \frac{1}{|P|} \sum_{j=1}^{\infty} \frac{1}{|P|^{2j \gamma}} +O(\gamma^{-1}) \leq \mathcal{M}(w,g) + \frac{g}{N+1}\\
& + \frac{1}{4 \gamma}  \sum_{d(P) \leq \frac{N}{2}} \frac{1}{|P|  d(P)}+O(\gamma^{-1})  = \mathcal{M}(w,g) + \frac{g}{N+1}  +O(\gamma^{-1}),
\end{align*} where the second inequality follows from the fact that $|P|^{2 \gamma-1} \geq 2 \gamma d(P),$ and the last identity follows by using the Prime Polynomial Theorem \eqref{ppt}. Using this in \eqref{ineq4} and since $1/\gamma^2 = o ( \log g)$, we get that
\begin{align}
\log | \mathcal{L}(w,\chi_D) \cdot \ldots \cdot \mathcal{L}(wu^{l-1},\chi_D) |& \leq \frac{6g}{N+1} +\mathcal{M}(w,g) \\
& + \sum_{d(P) \leq N} \sum_{j=0}^{l-1} \frac{a_j(d(P)) \chi_D(P) d(P)}{\sqrt{|P|}} \cos( (\theta+j \beta)d(P)) . \label{sump}
\end{align}
Let $S_1$ be the sum in \eqref{sump} truncated at $d(P) \leq N_0$ and let $S_2$ be the sum with $N_0 < d(P) \leq N$. 
If $D$ is such that $ \log | \mathcal{L}(w,\chi_D) \cdot \ldots \cdot \mathcal{L}(wu^{l-1},\chi_D) | \geq \mathcal{M}(w,g) + V $, then either $S_2 \geq \frac{V}{A} $ or $S_1 \geq V(1-\frac{7}{A}):=V_1$. Let $\mathcal{F}_2$ be the set of $D$ with $S_2 \geq \frac{V}{A}$ and $\mathcal{F}_1$ be the set of $D$ with $S_1 \geq V_1$. 
Similarly as in the proof of Theorem \ref{upperbound}, for any $k \leq g/N$, we have
$$ | \mathcal{F}_2| \ll q^{2g+1} \frac{1}{(V/A)^{2k}} \Big( \frac{2k}{e}\Big)^k \Big(  \sum_{ N_0 \leq d(P) \leq N} \frac{|a(P)|^2}{|P|}\Big)^k,$$
where $$a(P) = \sum_{j=0}^{l-1} a_j(d(P)) \cos((\theta+j \beta)d(P)) d(P).$$
Now using the expression for $a_j(m)$ given in Lemma \ref{lalfa} and using a similar upper bound as for the contribution from primes square, we have that
 $$a(P) \leq \cos( \theta d(P)) + \frac{3}{2\gamma d(P)} +O \Big( \frac{d(P)}{N} \Big) .$$
  We pick $k=\lfloor \frac{V}{A} \rfloor$ and then
  $$ | \mathcal{F}_2| \ll  q^{2g+1} \exp \left( \frac{V}{A} \log \Big(  \frac{2A}{eV} \Big( \log \log g + \frac{3V^2 (\log g)^2}{\gamma^2 g^2 A^2} \Big)\Big)\right).$$
  Using the fact that $V \leq \frac{cg}{\log_q g}$ and that $1/ \gamma^2 = o(\log g)$, we get that
 \begin{equation}
  | \mathcal{F}_2|  \ll q^{2g+1} \exp \left( - \frac{V}{2A} \log V \right). \label{bf2}
  \end{equation}
 If $S_1 \geq V_1$, for any $k \leq g/N_0$, 
 $$ | \mathcal{F}_1|  \ll q^{2g+1} \frac{1}{V_1^{2k} } \Big( \frac{2k}{e} \Big)^k \Big( \sum_{d(P) \leq N_0} \frac{|a(P)|^2}{|P|} \Big)^k,$$ with $a(P)$ as before.
   Using the Prime Polynomial Theorem \eqref{ppt}, we get that
 $$  \sum_{d(P) \leq N_0} \frac{|a(P)|^2}{|P|} \ll  \mathcal{V}(w,g) + \frac{3}{\gamma^2} =\mathcal{V}.$$ When $V \leq \mathcal{V}^2$, pick $k = \lfloor \frac{V_1^2}{2 \mathcal{V}} \rfloor$, and when $V > \mathcal{V}^2$, pick $k= \lfloor 10 V \rfloor$. Then
 \begin{equation} | \mathcal{F}_1| \ll q^{2g+1} \exp \left( - \frac{V_1^2}{2 \mathcal{V}} \right) + q^{2g+1} \exp(-4V \log V). \label{bf1}
 \end{equation}
  Using the bounds \eqref{bf1} and \eqref{bf2}, we obtain ~\eqref{nv1}, ~\eqref{nv2} and ~\eqref{nv3}, which finishes the proof of Lemma ~\ref{shiftedupperb}. 
\end{proof}

\section{Appendix}
\label{app}
\subsection{Sums involving $\sin$ and $\cos$}

We will prove the following auxiliary lemmas. 
\begin{lemma}
For $\theta \in [0, \pi)$, 
$$ \sum_{n=1}^g \frac{\cos(2n \theta)}{n} \leq \log \left( \min \Big\{g, \frac{1}{2 \theta} \Big\} \right) +O(1).$$ 
\label{ubvar}
\end{lemma}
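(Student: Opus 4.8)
The plan is to split the sum at a threshold $N$ comparable to $1/(2\theta)$ and treat the two pieces by completely different means. First, in the trivial regime $\theta \le 1/(2g)$ --- where $\min\{g,\,1/(2\theta)\}=g$ --- I would simply bound $\cos(2n\theta)$ by $1$, so that $\sum_{n=1}^{g}\cos(2n\theta)/n \le \sum_{n=1}^{g}1/n = \log g + O(1)$, which already gives the claim. So assume $\theta > 1/(2g)$, hence $\min\{g,\,1/(2\theta)\}=1/(2\theta)$, and set $N=\lceil 1/(2\theta)\rceil$, noting $1\le N<g$ and $N\asymp 1/\theta$.

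For the head $\sum_{n=1}^{N}\cos(2n\theta)/n$ I would again bound $|\cos(2n\theta)|\le 1$, which gives $\sum_{n=1}^{N}1/n=\log N+O(1)=\log(1/(2\theta))+O(1)$; this is precisely the main term in the statement. For the tail $\sum_{n=N+1}^{g}\cos(2n\theta)/n$ the oscillation of the cosine has to be exploited. Writing $B_m:=\sum_{n=1}^{m}\cos(2n\theta)=\Re\!\bigl(e^{2i\theta}(e^{2im\theta}-1)/(e^{2i\theta}-1)\bigr)$, the standard geometric-series bound gives $|B_m|\le 2/|e^{2i\theta}-1|=1/\sin\theta$ uniformly in $m$ (here $\theta\in(0,\pi)$, so $\sin\theta>0$). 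Summation by parts together with a telescoping sum then yields
$$\Bigl|\sum_{n=N+1}^{g}\frac{\cos(2n\theta)}{n}\Bigr| \le \frac{1}{\sin\theta}\Bigl(\frac1g+\frac{1}{N+1}+\sum_{n=N+1}^{g-1}\Bigl(\frac1n-\frac{1}{n+1}\Bigr)\Bigr) = \frac{2}{(N+1)\sin\theta}.$$
Since $N+1 > 1/(2\theta)$ and $\sin\theta \ge (2/\pi)\theta$, this is $O(1)$, and adding it to the head bound finishes the proof.

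I expect the only step with any content to be the uniform estimate $|B_m| = O(1/\sin\theta)$ for the partial sums of the Dirichlet-type kernel $\sum\cos(2n\theta)$ --- although this too is routine, being the usual geometric-series computation. The point that needs a little care is rather the bookkeeping around the split point $N$: choosing $N$ comparable to $1/(2\theta)$ is exactly what makes the head produce $\log(1/(2\theta))$ while keeping the tail bounded, and it is in verifying $(N+1)\sin\theta \gg 1$ --- where one uses that on $[0,\pi)$ the quantity $\sin\theta$ is comparable to $\theta$ --- that the hypothesis on $\theta$ enters.
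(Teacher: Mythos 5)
Your strategy matches the paper's: same split at $N \asymp 1/\theta$, same trivial bound on the head. For the tail you replace the paper's sum-to-integral comparison by Abel summation against the uniform bound $\bigl|\sum_{n\le m}\cos(2n\theta)\bigr| \le 1/\sin\theta$, which is a clean, essentially equivalent alternative, and the telescoping bookkeeping giving $2/((N+1)\sin\theta)$ is correct.

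The step that fails is your claim that $\sin\theta$ is comparable to $\theta$ on all of $[0,\pi)$. Jordan's inequality $\sin\theta \ge (2/\pi)\theta$ holds only on $[0,\pi/2]$; as $\theta \to \pi^-$ one has $\sin\theta \to 0$ while $\theta \to \pi$. In that range $N=1$, and your tail estimate gives $O(1/\sin\theta) = O(1/(\pi-\theta))$, which is unbounded. This cannot be patched, because the stated inequality is in fact false for $\theta$ near $\pi$ once $g$ is large: setting $\phi = \pi - \theta$ one has $\cos(2n\theta)=\cos(2n\phi)$, so $\sum_{n \le g}\cos(2n\theta)/n = \log\bigl(\min\{g,\phi^{-1}\}\bigr)+O(1)$, whereas the right-hand side of the lemma is $O(1)$ as soon as $\theta > 1/2$. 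The lemma should be read with $\theta$ restricted to $[0,\pi/2]$, or equivalently with $1/(2\theta)$ replaced by $1/(2\min\{\theta,\pi-\theta\})$. The paper's own tail argument (comparison with $\int \cos(2t\theta)/t\,dt$ plus integration by parts) has the same blind spot --- it too silently requires $\theta$ bounded away from $\pi$ --- so the correct fix is to state the restriction, not to try to force the estimate through on all of $[0,\pi)$.
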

\begin{proof}
If $g \leq \frac{1}{2 \theta}$, then 
$$ \sum_{n=1}^g \frac{\cos(2n \theta)}{n} \leq \sum_{n=1}^g \frac{1}{n}  = \log g + O(1).$$
If $g> \frac{1}{2 \theta}$, then 
$$  \sum_{n=1}^g \frac{\cos(2n \theta)}{n} = \sum_{n=1}^{\frac{1}{2 \theta}} \frac{\cos(2n \theta)}{n} + \sum_{\frac{1}{2 \theta}<n \leq g} \frac{\cos(2n \theta)}{n}.$$
For the first sum above, we have
$$  \sum_{n=1}^{\frac{1}{2 \theta}} \frac{\cos(2n \theta)}{n} \leq  \sum_{n=1}^{\frac{1}{2 \theta}} \frac{1}{n} = \log \Big( \frac{1}{2 \theta} \Big) + O(1).$$ For the second sum, we have
$$ \sum_{\frac{1}{2 \theta}<n \leq g} \frac{\cos(2n \theta)}{n} = O(1),$$ which follows by comparing the sum to the integral $ \int_{\frac{1}{2 \theta}}^{g} \frac{\cos(2t \theta)}{t} \, dt,$ and then using integration by parts. Putting everything together finishes the proof.
\end{proof}
\begin{lemma} \label{sumpowerssin}
Let $\theta$ be such that $1/\theta=o(g)$ and $k \geq 1$. Then
$$ \sum_{m=1}^{2g} m^k \sin(m \theta) = -\frac{(2g)^k \cos ( (2g+\frac{1}{2}) \theta)}{2\sin(\theta/2)}+ O \left(g^{k-1} \frac{1}{\sin^2 (\theta/2)}\right),$$ and
$$ \sum_{m=1}^{2g} m^k \cos ( m \theta) = \frac{(2g)^k \sin( (2g+\frac{1}{2}) \theta)}{2 \sin(\theta/2)} + O \left(g^{k-1} \frac{1}{\sin^2 (\theta/2)}\right).$$
\end{lemma}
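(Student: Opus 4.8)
The plan is to handle both identities at once by studying the complex exponential sum $T_k := \sum_{m=1}^{2g} m^k e^{im\theta}$, whose real part is $\sum_{m=1}^{2g}m^k\cos(m\theta)$ and whose imaginary part is $\sum_{m=1}^{2g}m^k\sin(m\theta)$. I will repeatedly use the geometric partial sums $D_M := \sum_{n=1}^M e^{in\theta} = \frac{e^{i(M+1)\theta}-e^{i\theta}}{e^{i\theta}-1}$, together with the uniform bound $|D_M| \le \frac{1}{|\sin(\theta/2)|}$ and the identity $\frac{1}{e^{i\theta}-1} = \frac{-i\,e^{-i\theta/2}}{2\sin(\theta/2)}$; the latter is what produces the $\sin((2g+\tfrac12)\theta)$ and $\cos((2g+\tfrac12)\theta)$ appearing in the stated main terms.

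First I would apply Abel summation with $a_m = e^{im\theta}$, $b_m = m^k$, giving $T_k = (2g)^k D_{2g} - \sum_{m=1}^{2g-1} D_m\big((m+1)^k - m^k\big)$. The naive bound $|D_m|\le|\sin(\theta/2)|^{-1}$ only yields an error of size $g^k/|\sin(\theta/2)|$, which is too large, so the key step is to split $D_m = \kappa\, e^{i(m+1)\theta} + c$ with $\kappa := (e^{i\theta}-1)^{-1}$ and $c := -e^{i\theta}(e^{i\theta}-1)^{-1}$, both independent of $m$ and both $O(|\sin(\theta/2)|^{-1})$. The constant part $c$ contributes $c(2g)^k$ from $(2g)^k D_{2g}$ and, via the telescoping sum $\sum_{m=1}^{2g-1}\big((m+1)^k-m^k\big) = (2g)^k - 1$, another $c\big((2g)^k-1\big)$ from the second sum; the two $c(2g)^k$ pieces cancel exactly, leaving $T_k = \kappa (2g)^k e^{i(2g+1)\theta} - \kappa \sum_{m=1}^{2g-1} e^{i(m+1)\theta}\big((m+1)^k - m^k\big) + c$.

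Next I would bound the remaining sum $S := \sum_{m=1}^{2g-1} e^{i(m+1)\theta}\big((m+1)^k - m^k\big)$. Reindexing and setting $\beta_n := n^k - (n-1)^k$, one has $\beta_n = O(n^{k-1})$ and $\beta_{n+1}-\beta_n = O(n^{k-2})$ (with $\beta_{n+1}-\beta_n\equiv 0$ when $k=1$). A second Abel summation of $\sum_{n=2}^{2g} e^{in\theta}\beta_n$ against the partial sums $E_M := \sum_{n=2}^M e^{in\theta}$ (again $O(|\sin(\theta/2)|^{-1})$) gives $|S| \ll |\sin(\theta/2)|^{-1}\big(g^{k-1} + \sum_{n\le 2g} n^{k-2}\big) \ll g^{k-1}/|\sin(\theta/2)|$. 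Hence $|\kappa S| \ll g^{k-1}/\sin^2(\theta/2)$ and, trivially, $|c| \ll |\sin(\theta/2)|^{-1} \ll g^{k-1}/\sin^2(\theta/2)$, so $T_k = \dfrac{(2g)^k e^{i(2g+1)\theta}}{e^{i\theta}-1} + O\!\big(g^{k-1}/\sin^2(\theta/2)\big)$. Finally I would plug in $\frac{e^{i(2g+1)\theta}}{e^{i\theta}-1} = \frac{-i e^{i(2g+\frac12)\theta}}{2\sin(\theta/2)}$, so the main term of $T_k$ equals $\frac{(2g)^k}{2\sin(\theta/2)}\big(\sin((2g+\tfrac12)\theta) - i\cos((2g+\tfrac12)\theta)\big)$; reading off real and imaginary parts gives exactly the two claimed formulas. (The hypothesis $1/\theta = o(g)$ is not actually needed for the identity; it only certifies that the error is smaller than the main term.)

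The main obstacle is precisely the point noted above: a single summation by parts is not sharp enough, because the $g^k$-sized fluctuations are carried entirely by the $m$-independent constant $c$ and must be seen to cancel telescopically before a second Abel summation on the genuinely lower-order remainder can finish the job. The only real care required is the bookkeeping in that cancellation — keeping track of signs, the index ranges $1\le m\le 2g-1$ versus $2\le n\le 2g$, and the stray "$-1$" coming from the telescoping sum.
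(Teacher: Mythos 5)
Your proof is correct, and it takes a genuinely different route from the paper's. The paper writes down the closed forms
$f(\theta)=\sum_{m=1}^{2g}\sin(m\theta)=\frac{\cos(\theta/2)-\cos((2g+\frac{1}{2})\theta)}{2\sin(\theta/2)}$ and
$h(\theta)=\sum_{m=1}^{2g}\cos(m\theta)=\frac{\sin(\theta/2)+\sin((2g+\frac{1}{2})\theta)}{2\sin(\theta/2)}$,
observes that $f^{(k)}$ and $h^{(k)}$ are $\pm\sum m^k\sin(m\theta)$ or $\pm\sum m^k\cos(m\theta)$ according to $k\bmod 4$, and then differentiates $k$ times and keeps the leading power of $g$. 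You instead work with the complex sum $T_k=\sum_{m=1}^{2g}m^k e^{im\theta}$, perform Abel summation twice, and the crucial structural idea — decomposing the Dirichlet kernel $D_m$ into an $m$\nobreakdash-independent constant $c$ plus an oscillating piece $\kappa e^{i(m+1)\theta}$ so that the $c$\nobreakdash-contributions telescope to a negligible $O(1/|\sin(\theta/2)|)$ — is not present in the paper. Your method is uniform in $k$ (no $k\bmod 4$ case analysis), treats both sums simultaneously by taking real and imaginary parts at the very end, and yields the error bound $O(g^{k-1}/\sin^2(\theta/2))$ without invoking the hypothesis $1/\theta=o(g)$; the paper's differentiation argument produces a chain of error terms of the shape $g^{k-j}/\sin^{j+1}(\theta/2)$ and needs $g\sin(\theta/2)\gg 1$ (that is, the stated hypothesis) to collapse them into $O(g^{k-1}/\sin^2(\theta/2))$. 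The one small bookkeeping point to make explicit is the $k=1$ case of the second Abel summation: there $\beta_n=n^k-(n-1)^k\equiv 1$ so $\beta_{n+1}-\beta_n\equiv 0$, and the $\sum_n n^{k-2}$ estimate should be replaced by $0$ rather than the harmless-looking $\sum n^{-1}=O(\log g)$ — you flag this correctly, and with it the bound $|S|\ll g^{k-1}/|\sin(\theta/2)|$ holds for all $k\geq 1$.
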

\begin{proof}
Let $$f(\theta) = \sum_{m=1}^{2g} \sin(m \theta),$$ and $$h ( \theta) = \sum_{m=1}^{2g} \cos (m \theta).$$ We write $\sin( m \theta) = \frac{1}{2 i} (e^{im\theta}-e^{-i m \theta})$ and $\cos(m \theta) = \frac{1}{2} (e^{i m \theta}+ e^{- i m \theta}).$ Evaluating the geometric series and after some manipulations, we arrive at
\begin{equation}
 f(\theta) = \frac{ \cos( \theta/2) - \cos( (2g+\frac{1}{2}) \theta)}{2 \sin (\theta/2)}, \label{sumsin}
 \end{equation} and
\begin{equation}
h(\theta) = \frac{ \sin(\theta/2) + \sin ( (2g+ \frac{1}{2}) \theta)}{2 \sin (\theta/2)}. \label{sumcos}
\end{equation}
From the expression of $f$, 
$$ f^{(k)}(\theta) = 
\begin{cases}
\displaystyle \sum_{m=1}^{2g} m^k \sin ( m \theta) & \mbox{ if } k \equiv  0 \pmod 4 \\
 \displaystyle \sum_{m=1}^{2g} m^k \cos(m \theta) & \mbox{ if } k \equiv 1 \pmod 4 \\
 \displaystyle - \sum_{m=1}^{2g} m^k \sin ( m \theta) & \mbox{ if } k \equiv  2 \pmod 4 \\
 \displaystyle -  \sum_{m=1}^{2g} m^k \cos(m \theta) & \mbox{ if } k \equiv 3 \pmod 4.
\end{cases} $$ 
Also 
$$ h^{(k)}(\theta) = 
\begin{cases}
\displaystyle \sum_{m=1}^{2g} m^k \cos( m \theta) & \mbox{ if } k \equiv  0 \pmod 4 \\
- \displaystyle \sum_{m=1}^{2g} m^k \sin( m \theta) & \mbox{ if } k \equiv  1 \pmod 4 \\
- \displaystyle \sum_{m=1}^{2g} m^k \cos( m \theta) & \mbox{ if } k \equiv  2 \pmod 4 \\
\displaystyle \sum_{m=1}^{2g} m^k \sin( m \theta) & \mbox{ if } k \equiv  3 \pmod 4 .\\
\end{cases}
$$
By successively differentiating \eqref{sumsin} and \eqref{sumcos} and looking at the highest powers of $g$, Lemma \ref{sumpowerssin} follows.
\end{proof}
We also need the following.
\begin{lemma}
Let $\theta$ be such that $1/ \theta = o(g)$. Then 
$$ \sum_{ k=1}^{a-1} \frac{ \sin ( k \theta)}{k} = \frac{ \pi - \theta}{2} - \frac{ \cos(a \theta)}{2a \sin( \theta/2)} - \frac{ \sin(a \theta)}{2a}+ O \left( \frac{1}{a^2 \sin^2 (\theta/2)} \right) .$$
\label{truncatedseries}
\end{lemma}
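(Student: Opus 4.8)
The plan is to reduce the finite sum to a convergent tail and then estimate that tail by partial summation, applied twice. First, recall the classical Fourier expansion of the sawtooth, $\sum_{k=1}^{\infty}\frac{\sin(k\theta)}{k}=\frac{\pi-\theta}{2}$ for $0<\theta<2\pi$; since $\theta\in(0,\pi)$ here (the hypothesis $1/\theta=o(g)$ forces $\theta>0$), this gives $\sum_{k=1}^{a-1}\frac{\sin(k\theta)}{k}=\frac{\pi-\theta}{2}-R_a$ with $R_a:=\sum_{k\ge a}\frac{\sin(k\theta)}{k}$. It then suffices to show $R_a=\frac{\cos(a\theta)}{2a\sin(\theta/2)}+\frac{\sin(a\theta)}{2a}+O\!\big(\tfrac{1}{a^2\sin^2(\theta/2)}\big)$.

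To estimate $R_a$ I would apply Abel summation with the partial sums $A(m):=\sum_{k=a}^{m}\sin(k\theta)$, which by \eqref{sumsin} (subtracting the range $[1,a-1]$ from $[1,m]$) equal $\frac{\cos((a-\tfrac12)\theta)-\cos((m+\tfrac12)\theta)}{2\sin(\theta/2)}$; in particular $|A(m)|\le|\sin(\theta/2)|^{-1}$, so the boundary term vanishes in the limit and $R_a=\sum_{m\ge a}A(m)\big(\tfrac1m-\tfrac1{m+1}\big)=\sum_{m\ge a}\frac{A(m)}{m(m+1)}$. Splitting $A(m)$ into its $m$-independent part $\frac{\cos((a-\tfrac12)\theta)}{2\sin(\theta/2)}$ and the remainder $-\frac{\cos((m+\tfrac12)\theta)}{2\sin(\theta/2)}$, the first part contributes exactly $\frac{\cos((a-\tfrac12)\theta)}{2\sin(\theta/2)}\sum_{m\ge a}\frac1{m(m+1)}=\frac{\cos((a-\tfrac12)\theta)}{2a\sin(\theta/2)}$, and the angle-addition identity $\cos((a-\tfrac12)\theta)=\cos(a\theta)\cos(\tfrac\theta2)+\sin(a\theta)\sin(\tfrac\theta2)$ together with $\cos(\tfrac\theta2)=1+O(\theta^2)$ rewrites this as $\frac{\cos(a\theta)}{2a\sin(\theta/2)}+\frac{\sin(a\theta)}{2a}$ up to an error that is subsumed in the stated $O(\cdot)$ under the hypothesis $1/\theta=o(g)$.

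The remaining contribution, $-\frac{1}{2\sin(\theta/2)}\sum_{m\ge a}\frac{\cos((m+\tfrac12)\theta)}{m(m+1)}$, is the crux. The trivial bound $\big|\sum_{m\ge a}\frac{\cos((m+\tfrac12)\theta)}{m(m+1)}\big|\le\sum_{m\ge a}\frac1{m(m+1)}=\tfrac1a$ is one power of $a$ too weak; instead I would run a second partial summation with $B(m):=\sum_{k=a}^{m}\cos((k+\tfrac12)\theta)=\frac{\sin(a\theta)-\sin((m+1)\theta)}{2\sin(\theta/2)}$ (a short geometric-series computation, parallel to \eqref{sumcos}), which is again $\ll|\sin(\theta/2)|^{-1}$. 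This gives $\sum_{m\ge a}\frac{\cos((m+\tfrac12)\theta)}{m(m+1)}=\sum_{m\ge a}B(m)\big(\tfrac{1}{m(m+1)}-\tfrac{1}{(m+1)(m+2)}\big)$, and since $\tfrac{1}{m(m+1)}-\tfrac{1}{(m+1)(m+2)}=\tfrac{2}{m(m+1)(m+2)}\ge0$ telescopes to $\tfrac{1}{a(a+1)}$, the sum is $\ll\frac{1}{a^2|\sin(\theta/2)|}$, so this contribution is $O\!\big(\tfrac{1}{a^2\sin^2(\theta/2)}\big)$. Collecting the three pieces and substituting back into $\frac{\pi-\theta}{2}-R_a$ yields the lemma.

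The main obstacle is precisely the oscillatory sum $\sum_{m\ge a}\frac{\cos((m+\tfrac12)\theta)}{m(m+1)}$: to gain the extra factor $1/a$ over the trivial estimate one must exploit cancellation in $\sum_{k\le m}\cos((k+\tfrac12)\theta)$, i.e.\ the uniform bound $|B(m)|\ll|\sin(\theta/2)|^{-1}$, through the second round of summation by parts; the rest (the Fourier series input, the first Abel summation, and the trigonometric rewriting) is routine.
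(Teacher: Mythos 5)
Your proposal is correct, and it takes a genuinely different route from the paper's while sharing the same opening reduction. Both proofs begin by writing $\sum_{k=1}^{a-1}=\sum_{k\ge 1}-\sum_{k\ge a}$ and using the sawtooth Fourier series for the first piece. To estimate the tail $\sum_{k\ge a}\sin(k\theta)/k$, the paper introduces the generating function $h(r,\theta)=\sum_{k\ge a}r^k\sin(k\theta)/k$, differentiates in $r$ to obtain a geometric series, performs two rounds of \emph{integration} by parts on $\int_0^R r^{a-1}(1-re^{i\theta})^{-1}\,dr$, and then sets $R=1$. You instead apply two rounds of discrete \emph{summation} by parts directly to the tail, first with $A(m)=\sum_{k=a}^m\sin(k\theta)$ and then with $B(m)=\sum_{k=a}^m\cos((k+\tfrac12)\theta)$. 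These are the discrete and continuous incarnations of the same ``integrate by parts twice to gain a factor $1/a$'' device, each resting on the uniform bounds $|A(m)|,|B(m)|\ll |\sin(\theta/2)|^{-1}$. Your route is a bit more self-contained: the paper implicitly appeals to Abel's limit theorem to pass from $r<1$ to $r=1$ (a step it does not comment on), whereas yours needs only Dirichlet's test and the closed forms already recorded in \eqref{sumsin}--\eqref{sumcos}. (Incidentally your formula for $B(m)$ has the overall sign flipped --- it should be $\frac{\sin((m+1)\theta)-\sin(a\theta)}{2\sin(\theta/2)}$ --- but only the magnitude is used, so nothing is affected.) Both proofs arrive at the same intermediate form $\frac{\cos((a-\frac12)\theta)}{2a\sin(\theta/2)}+O(a^{-2}\sin^{-2}(\theta/2))$ for the tail and both finish by the angle-addition identity together with $\cos(\theta/2)=1+O(\theta^2)$; that final Taylor step introduces an extra $O(\theta/a)$ which is dominated by the stated error only when $a\theta^3\ll 1$, a point left equally implicit in the paper (and harmless in the applications, where $a\asymp g$ and $\theta\asymp g^{-1/2}$), so it is not a gap particular to your argument.
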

\begin{proof}
We write 
$$ \sum_{k=1}^{a-1}  \frac{ \sin ( k \theta)}{k} = \sum_{k=1}^{\infty}  \frac{ \sin ( k \theta)}{k} - \sum_{k=a}^{\infty}  \frac{ \sin ( k \theta)}{k}.$$ One can show that 
\begin{equation}
 \sum_{k=1}^{\infty}  \frac{ \sin ( k \theta)}{k}  = \frac{ \pi - \theta}{2}.
 \label{expr}
 \end{equation}
  Now we need to evaluate the second sum above. Let 
$$ h (r,\theta) = \sum_{k=a}^{\infty} \frac{r^k}{k} \sin ( k \theta).$$ Then
\begin{align*}
\frac{ \partial}{ \partial r} h(r,\theta) &= \sum_{k=a}^{\infty} r^{k-1} \sin( k \theta)= \frac{1}{2 i} \sum_{k=a}^{\infty} r^{k-1} e^{ik \theta} - \frac{1}{2 i}  \sum_{k=a}^{\infty} r^{k-1} e^{- ik \theta} \\
&= \frac{r^{a-1}}{2 i} \left(  \frac{ e^{i a \theta} }{1- r e^{i \theta}} - \frac{ e^{-ia \theta}}{1-r e^{-i \theta}} \right) .
\end{align*}
We integrate the above with respect to $r$ from $0$ to $R$. Then
$$h(R,\theta) = \frac{1}{2 i} \left(  e^{i a \theta} \int_0^R \frac{r^{a-1}}{1-re^{i \theta}} \, dr - e^{-ia\theta} \int_0^R \frac{r^{a-1}}{1-re^{-ia \theta}} \, dr  \right).$$
Using integration by parts
$$  \int_0^R \frac{r^{a-1}}{1-re^{i \theta}} \, dr = \frac{R^a}{a(1-R e^{i \theta})} - \frac{e^{i \theta} R^{a+1}}{a(a+1)(1-Re^{i \theta})^2} + \frac{2e^{2 i \theta}}{a(a+1)} \int_0^R \frac{r^{a+1}}{(1-re^{i \theta})^3} \, dr ,$$ and similarly for the second integral. Now plugging in $R=1$, we have
\begin{equation}
h(1,\theta) = \sum_{k=a}^{\infty} \frac{\sin(k \theta)}{k} = \frac{\sin(a \theta)- \sin((a-1)\theta)}{a(2-2 \cos(\theta))} +O \left(\frac{1}{a^2 \sin^2(\theta/2)} \right)= \frac{\cos( (a-\frac{1}{2}) \theta) }{2 a\sin( \theta/2)}+ O \left(\frac{1}{a^2 \sin^2(\theta/2)} \right).
\label{tail}
\end{equation}
Combining equations \eqref{expr} and \eqref{tail} finishes the proof.
\end{proof}

\begin{lemma}
For $0 \leq k \leq 9$ and $1/\theta = o(1/g)$, we have
 \begin{align*} A(k,\theta) := \sum_{j=0}^{\alpha-1} \sum_{m=0}^{2g}  m^k  \frac{ \sin (2 \pi \theta(2m-j))}{2m-j}+\sum_{j=0}^{\alpha-1} \sum_{m=1}^{2g}  m^k  \frac{ \sin (2 \pi \theta(2m+j))}{2m+j} =  \end{align*}
 $$ \begin{cases}
 - \frac{\alpha}{2} \left[ \frac{ (2g)^{k-1} \cos(8g \pi \theta)}{2 \pi \theta} - (2g)^{k-1} \sin(8 g \pi \theta) \right] +O \Big( g^{k-1} \theta \alpha^3 + g^{k-2} \theta^{-1} \alpha \Big) & \mbox{ if } k \geq 2 \\
- \frac{\alpha}{2} \left[ \frac{\cos(8g \pi \theta)}{2 \pi \theta} - \sin(8g \pi \theta)  - \cot(2 \pi \theta)\right]  + \sum_{m=1}^{2g} m \sin(4 \pi m \theta) (A(m)+B(m))+ O(\theta \alpha^3) & \mbox{ if } k=1 \\
\frac{\pi \alpha}{2} - \frac{\alpha}{2} \left[   \frac{\cos(8g \pi \theta)}{(2g) 2 \pi \theta} - \frac{\sin(8g \pi \theta)}{2g} \right]   +O\Big(g^{-1} \theta \alpha^3 + g^{-2} \theta^{-2} \alpha \Big)  & \mbox{ if } k=0,
 \end{cases} $$
 where $A(m),B(m) = O (\frac{\alpha}{m^2})$ can be written down explicitly (see equation \eqref{aterm}.)
 \label{ak}
\end{lemma}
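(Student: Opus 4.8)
The plan is to reduce $A(k,\theta)$ to the elementary trigonometric sums already evaluated in Lemmas~\ref{sumpowerssin} and~\ref{truncatedseries}. First I would fix $j$ and clear the denominators $2m\mp j$ in the two inner sums: by the binomial theorem $m^k = 2^{-k}\bigl((2m\pm j)\mp j\bigr)^k$, so
\[
\frac{m^k}{2m\pm j} = 2^{-k}\sum_{i=1}^{k}\binom{k}{i}(\mp j)^{k-i}(2m\pm j)^{i-1} + \frac{(\mp j)^k}{2^k(2m\pm j)}.
\]
This writes each inner sum as a combination, with coefficients that are powers of $j$, of ``polynomial'' pieces $\sum_m (2m\pm j)^{i-1}\sin\bigl(2\pi\theta(2m\pm j)\bigr)$ for $1\le i\le k$, plus one ``remainder'' piece $\sum_m \frac{\sin(2\pi\theta(2m\pm j))}{2m\pm j}$. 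For $k=0$ only the remainder piece occurs, which is why that case is qualitatively different.

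For the polynomial pieces I would substitute $n = 2m\pm j$ and split the resulting sum over an arithmetic progression of step $2$ into its ``full'' part and an ``alternating'' part. Writing $(-1)^n\sin(2\pi\theta n) = \tfrac12\bigl(\sin((2\pi\theta+\pi)n)+\sin((2\pi\theta-\pi)n)\bigr)$, the alternating part is controlled by Lemma~\ref{sumpowerssin} with a phase near $\pm\pi$, where $\sin(\cdot/2)$ is bounded away from $0$, hence is $O(g^{i-1})$ and harmless after summing over $j$. The full part is $\tfrac12\sum_{n=1}^{\sim 4g} n^{i-1}\sin(2\pi\theta n)$ up to boundary corrections from the shift by $j$, and Lemma~\ref{sumpowerssin} evaluates it: the dominant contribution comes from $i=k$, and after using $8g\pi\theta = 2\pi\theta\cdot 4g$, $\sin(\pi\theta) = \pi\theta + O(\theta^3)$, and $(4g)^{k-1} = 4^{k-1}g^{k-1}$, it collapses to a fixed multiple of $\frac{(2g)^{k-1}\cos(8g\pi\theta)}{2\pi\theta}$; expanding $\cos\bigl((4g+\tfrac12)2\pi\theta\bigr) = \cos(8g\pi\theta)\cos(\pi\theta) - \sin(8g\pi\theta)\sin(\pi\theta)$ produces the secondary $(2g)^{k-1}\sin(8g\pi\theta)$ term. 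For the remainder pieces I would again reindex $n = 2m\pm j$ over an arithmetic progression, fold the negative-$n$ range using the parity of $n^{i-1}\sin(2\pi\theta n)$ (treating the $O(\alpha)$ near-zero terms crudely, each of size $O(\alpha^{k}\theta)$), and apply Lemma~\ref{truncatedseries}; the constant $\tfrac{\pi-\theta}{2}$ there is precisely what yields the $\tfrac{\pi\alpha}{2}$ main term in the $k=0$ case, while its $O(a^{-1})$ boundary terms feed the error.

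Finally I would sum over $j=0,\dots,\alpha-1$. To leading order the dominant pieces are $j$-independent, so the $j$-sum simply multiplies them by $\alpha$ (the $\tfrac12$ visible in each claimed main term being $2^{-k}\binom{k}{k}$, resp. the $\tfrac12$ from $\tfrac{m^k}{2m}$), and I would combine the $2m-j$ and $2m+j$ contributions via $\sin(2\pi\theta(2m-j)) + \sin(2\pi\theta(2m+j)) = 2\sin(4\pi m\theta)\cos(2\pi\theta j)$. In the $k=1$ case this combination, together with the closed form~\eqref{sumsin} for $\sum_{m}\sin(4\pi m\theta)$, produces the $\cot(2\pi\theta)$ main term; the genuinely non-elementary tail $\sum_{m=1}^{2g} m\sin(4\pi m\theta)\bigl(A(m)+B(m)\bigr)$ that remains after the $j$-expansion I would simply record, reading $A(m),B(m)$ (and checking $A(m),B(m)=O(\alpha/m^2)$) off formula~\eqref{aterm}; it does not need to be simplified, and in fact cancels later in Lemma~\ref{compt1}. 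The three cases $k\ge2$, $k=1$, $k=0$ are stated separately because they differ in which pieces are of main order: for $k\ge2$ the top polynomial piece dominates, for $k=1$ the remainder/combination piece is of the same size $O(\alpha/\theta)$, and for $k=0$ only the remainder piece survives at leading order.

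The main obstacle is the bookkeeping of error terms rather than any single hard estimate. One has to check that the subleading polynomial pieces ($i<k$), the $O(g^{i-2}/\sin^2(\pi\theta))$ errors from Lemma~\ref{sumpowerssin}, the $O\bigl(1/(a^2\sin^2(\theta/2))\bigr)$ errors from Lemma~\ref{truncatedseries}, the boundary corrections from the shifts by $j$, the near-zero diagonal terms (where $2m\pm j$ is small, requiring separate crude bounds for $m = O(\alpha)$), and the errors from replacing $\cos(2\pi\theta j)$ by $1$ and from the $\sin(\pi\theta)=\pi\theta+O(\theta^3)$ substitution, all assemble after the $j$-summation into exactly $O\bigl(g^{k-1}\theta\alpha^3 + g^{k-2}\theta^{-1}\alpha\bigr)$ for $k\ge2$ and into the stated analogues for $k=1,0$. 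Keeping the powers of $g$, $\theta^{-1}$, and $\alpha$ aligned across all of these is the part that demands care.
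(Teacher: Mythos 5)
Your approach is genuinely different from the paper's. You clear the denominator by writing $m^k = 2^{-k}\bigl((2m\pm j)\mp j\bigr)^k$, reindex each piece to $n=2m\pm j$, and then handle the resulting step-$2$ progressions via a full/alternating parity split and Lemma~\ref{sumpowerssin}. The paper never reindexes: it Taylor-expands $\sin(2\pi\theta(2m\pm j)) = \sin(4\pi m\theta) + 2\pi\theta j\cos(4\pi m\theta) + O(\theta^2\alpha^2)$ in the small parameter $\theta j$, keeps the denominators $1/(2m\pm j)$ intact, and evaluates $\sum_{j=0}^{\alpha-1} 1/(2m\pm j) = \alpha/(2m)+O(\alpha/m^2)$ by the asymptotic expansion of harmonic numbers, reading off the explicit $A(m),B(m)$ terms from that expansion. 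Both routes lead to the same main quantity $\alpha\sum_m m^{k-1}\sin(4\pi m\theta)$, but the paper's route never produces a step-$2$ sum and so never has to confront the parity question.

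That parity question is where your sketch has a genuine gap. For the top polynomial piece $i=k$ you bound the alternating part by $O(g^{k-1})$ per $j$ (Lemma~\ref{sumpowerssin} at a phase near $\pi$) and declare it ``harmless after summing over $j$.'' It is not: summed over $j=0,\dots,\alpha-1$ that bound gives $O(g^{k-1}\alpha)$, and even granting the $(-1)^j$ cancellation in the $j$-sum it is at best $O(g^{k-1})$. With the eventual choices $\theta_0=g^{-1/2}$, $\alpha\asymp\log g$, the claimed error for $k\ge 2$ is $O(g^{k-1}\theta\alpha^3+g^{k-2}\theta^{-1}\alpha)\asymp g^{k-3/2}(\log g)^3$, which is strictly smaller than $g^{k-1}$, so neither version of ``harmless'' closes the estimate. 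The same problem recurs at $i=k-1$: each alternating piece is $O(g^{k-2}/\theta)$ and carries a factor $j$, giving $O(g^{k-2}\alpha^2/\theta)$ after the $j$-sum, a factor $\alpha$ worse than the target. What actually makes these contributions admissible is a further chain of cancellations that your write-up never invokes: the indicator of $n\equiv j\pmod 2$ carries the sign $(-1)^j$, and after interchanging the $j$- and $n$-sums the bulk vanishes because $\sum_{j=0}^{\alpha-1}(-1)^j=0$ (here it matters that $\alpha$ is even), leaving only $O(\alpha)$ boundary terms near $n\approx 4g$; those boundary terms in turn nearly cancel between the $2m-j$ and $2m+j$ pieces because the two ranges shift in opposite directions and $n^{k-1}\sin(2\pi\theta n)$ is slowly varying, dropping a power of $g$. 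Likewise, the $(\mp j)^{k-i}$ coefficients flip sign with the $\pm$ piece when $k-i$ is odd, and that sign cancellation (combined with a Taylor expansion to pick up the leading non-cancelling correction) is needed to tame the subleading $i<k$ pieces. None of this is in your sketch; without it the error bound you would be able to assert is too large by a factor growing like a power of $g$.
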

\begin{proof}
First assume that $k \geq 2$. Write $\sin(2 \pi \theta (2m+j)) = \sin(4 \pi m \theta) +2 \pi \theta j \cos(4 \pi m \theta)+ O(\theta^2 \alpha^2),$ by using the Taylor series for $\sin $ and $\cos$.  Note that the term of size $\theta^2 \alpha^2$ can be written down explicitly. Then using Lemma \ref{sumpowerssin}, this error term will be of size $O\Big( g^{k-1} \theta \alpha^3 \Big)$. Hence 
\begin{align*}
A(k,\theta) &= \sum_{m=1}^{2g} m^k \sin(4 \pi m \theta) \sum_{j=0}^{\alpha-1} \left(  \frac{1}{2m+j} +  \frac{1}{2m-j} \right) \\
&+ 2 \pi \theta \sum_{m=1}^{2g} m^k  \cos ( 4 \pi m \theta)  \sum_{j=0}^{\alpha-1} \left(  \frac{j}{2m+j} -  \frac{j}{2m-j} \right) + O \Big( g^{k-1} \theta \alpha^3 \Big).
\end{align*}
The sum over $j$ in the second term above is of size $\frac{\alpha^3}{m^2}$, so again using Lemma \ref{sumpowerssin}, the second term in the expression for $A(k,\theta)$ is bounded by $\alpha^3 g^{k-2}$. 
Using the asymptotic expansion for harmonic numbers, we have
$$ \sum_{j=0}^{\alpha-1} \frac{1}{2m+j} = \log \left( \frac{ 2m-1+\alpha}{2m-1} \right)  + \frac{1}{2(2m+\alpha-1)} - \frac{1}{2(2m-1)} - \sum_{k=1}^{\infty} \frac{B_{2k}}{2k} \left( \frac{1}{(2m+\alpha-1)^{2k}} - \frac{1}{(2m-1)^{2k}} \right),$$ where $B_{2k}$ are Bernoulli numbers.  Then we have
\begin{equation}
  \sum_{j=0}^{\alpha-1} \frac{1}{2m+j} = \frac{\alpha}{2m} + A(m),
  \label{firsteq}
  \end{equation}  where $A(m) = O(\frac{\alpha}{m^2}),$ and $A(m)$ can be written explicitly as
\begin{equation}
A (m)= \frac{\alpha}{2m(2m-1)} - \frac{\alpha}{2 ( 2m-1)(2m+\alpha-1)} + \sum_{k=2}^{\infty}  \frac{(-1)^{k+1} \alpha^k}{k (2m-1)^k} + \sum_{k=1}^{\infty} \frac{B_{2k}}{2k} \left( - \frac{1}{(2m+\alpha-1)^{2k}} + \frac{1}{(2m-1)^{2k}} \right). \label{aterm}
\end{equation}
Similarly we write
\begin{equation}
\sum_{j=0}^{\alpha-1} \frac{1}{2m-j} = \frac{\alpha}{2m} +B(m), \label{secondeq}
\end{equation} where $B(m)= O ( \frac{\alpha}{m^2})$ and $B(m)$ can be written down explicitly and has a similar expression as $A(m)$. When plugged into the expression for $A(k,\theta)$, the terms involving $A(m)$ and $B(m)$ will contribute a term of size $O\Big(g^{k-2} \theta^{-1} \alpha \Big)$ (by Lemma \ref{sumpowerssin}). 
Then 
$$ A(k,\theta) = \alpha \sum_{m=1}^{2g} m^{k-1} \sin(4 \pi m \theta) +  O\Big( g^{k-1} \theta \alpha^3 + g^{k-2} \theta^{-1} \alpha \Big)$$ Using Lemma \ref{sumpowerssin} and then again Taylor series for $\sin$ and $\cos$, we end up with
\begin{align}
 A(k,\theta)  &= - \alpha \frac{ (2g)^{k-1} \cos(8g \pi \theta)}{2 \sin(2 \pi \theta)} + \alpha \frac{(2g)^{k-1} \sin(8g \pi \theta)}{2} +O\Big( g^{k-1} \theta \alpha^2 + g^{k-2} \theta^{-1} \alpha \Big)  \nonumber \\
 &= - \frac{\alpha}{2} \left[ \frac{ (2g)^{k-1} \cos(8g \pi \theta)}{2 \pi \theta} - (2g)^{k-1} \sin(8 g \pi \theta) \right] +O \Big( g^{k-1} \theta \alpha^3 + g^{k-2} \theta^{-1} \alpha \Big).
 \end{align}
Now assume that $k=1$. We rewrite
$$A(1,\theta)= \sum_{j=0}^{\alpha-1} \sum_{m=0}^{2g}  m \left( \frac{ \sin (2 \pi \theta(2m-j))}{2m-j}+  \frac{ \sin (2 \pi \theta(2m+j))}{2m+j} \right).$$
We proceed similarly as when $k \geq 2$, and then 
$$A(1,\theta) = \sum_{j=0}^{\alpha-1} \sum_{m=1}^{2g} m \sin(4 \pi m \theta) \left( \frac{1}{2m+j}+\frac{1}{2m-j} \right) +O(\theta \alpha^3).$$
Using \eqref{firsteq} and \eqref{secondeq}, 
$$A(1,\theta) = \alpha \sum_{m=1}^{2g} \sin(4 \pi m \theta) + \sum_{m=1}^{2g} m \sin(4 \pi m \theta) (A(m)+B(m))+ O(\theta \alpha^3).$$
From the proof of Lemma \ref{sumpowerssin}, and again using Taylor series, we get that
\begin{align}
A(1,\theta) &= \alpha \frac{\cos(2 \pi \theta)- \cos(8g \pi \theta)}{2 \sin(2 \pi \theta)} + \alpha \frac{\sin(8 g \pi \theta)}{2} + \sum_{m=1}^{2g} m \sin(4 \pi m \theta) (A(m)+B(m))+ O(\theta \alpha^3) \nonumber \\
&=- \frac{\alpha}{2} \left[ \frac{\cos(8g \pi \theta)}{2 \pi \theta} - \frac{\cos(2 \pi \theta)}{\sin(2 \pi \theta)} - \sin(8g \pi \theta) \right]  + \sum_{m=1}^{2g} m \sin(4 \pi m \theta) (A(m)+B(m))+ O(\theta \alpha^3). \label{a1}
\end{align}
Now assume that $k=0$. We have
$$A(0,\theta) = \sum_{j=0}^{\alpha-1} \sum_{m=1}^{2g} \frac{\sin(2 \pi \theta(2m+j)}{2m+j} + \sum_{j=0}^{\alpha-1} \sum_{m=0}^{2g} \frac{\sin(2 \pi \theta(2m-j)}{2m-j} .$$ We  focus on the first summand above. Let $k = 2m+j$. We rewrite
\begin{align}
\sum_{j=0}^{\alpha-1} \sum_{m=1}^{2g} \frac{\sin(2 \pi \theta(2m+j)}{2m+j} &= \sum_{k=2}^{4g + \alpha-1} \frac{\sin(2 \pi \theta )k}{k} \sum_{m= \max\{ 1, \lceil \frac{k-\alpha+1}{2} \rceil \}}^{ \min\{2g, \lfloor \frac{k}{2} \rfloor \}} 1 =  \sum_{k=2}^{\alpha+1} \frac{\sin(2 \pi \theta k)}{k}  \floor[\Big]{ \frac{k}{2}}  \nonumber \\
&+ \frac{\alpha}{2} \sum_{k=\alpha+2}^{4g+1} \frac{\sin(2 \pi \theta k)}{k} + \sum_{k=4g+2}^{4g+\alpha-1} \frac{\sin(2 \pi \theta k)}{k}  \left(2g- \ceil[\Big]{ \frac{k-\alpha+1}{2}}+1 \right). \label{sum1}
\end{align}
Similarly 
\begin{align}
\sum_{j=0}^{\alpha-1} \sum_{m=0}^{2g} \frac{\sin(2 \pi \theta(2m-j)}{2m-j} &= \sum_{k=-(\alpha-1)}^{4g}  \frac{\sin(2 \pi \theta k)}{k}  \sum_{m = \max\{0, \ceil{\frac{k}{2}} \}}^{\min\{2g, \floor{ \frac{k+\alpha-1}{2}}\}}1 \nonumber \\
&= \sum_{k=-(\alpha-1)}^0  \frac{\sin(2 \pi \theta k)}{k}  \left( \floor[\Big]{ \frac{k+\alpha-1}{2} } +1 \right) + \frac{\alpha}{2} \sum_{k=1}^{4g-\alpha+2}  \frac{\sin(2 \pi \theta k)}{k} \nonumber \\
& + \sum_{k=4g-\alpha+3}^{4g}  \frac{\sin(2 \pi \theta k)}{k}  \left(2g- \ceil[\Big]{\frac{k}{2}}+1 \right) \label{sum2}
\end{align}
Combining equations \eqref{sum1} and \eqref{sum2}, we get that
\begin{align}
A(0,\theta) &= \frac{\alpha}{2} \sum_{k=0}^{4g+\alpha-1}  \frac{\sin(2 \pi \theta k)}{k}  + \frac{\alpha}{2} \sum_{k=1}^{4g-\alpha+2} \frac{\sin(2 \pi \theta k)}{k}  \label{b1} \\
&+ \sum_{k=4g-\alpha+3}^{4g} \frac{\sin(2 \pi \theta k)}{k}  \left( 2g - \frac{k-1}{2} \right)+ \sum_{k=4g+2}^{4g+\alpha-1} \frac{\sin(2 \pi \theta k)}{k}  \left( 2g - \frac{k-1}{2} \right) \label{b2} \\
&+ \frac{1}{2} \sum_{\substack{k=4g-\alpha+3 \\ k \text{ even}}}^{4g}  \frac{\sin(2 \pi \theta k)}{k}   - \frac{1}{2} \sum_{\substack{k=4g+2 \\ k \text{ even}}}^{4g+\alpha-1}  \frac{\sin(2 \pi \theta k)}{k} . \label{b3}
\end{align} Let $B_1$ denote the term \eqref{b1}, $B_2$ the term \eqref{b2} and $B_3$ the term \eqref{b3}. 
Using Lemma \ref{truncatedseries}, we have
$$   \sum_{k=0}^{4g+\alpha-1}  \frac{\sin(2 \pi \theta k)}{k} = \frac{ \pi +2 \pi \theta}{2} - \frac{ \cos((4g+\alpha)2 \pi \theta)}{2(4g+\alpha) \sin( \pi \theta)} - \frac{ \sin((4g+\alpha) 2 \pi \theta)}{2(4g+\alpha)} + O ( g^{-2}\theta^{-2}  ),$$
and
$$ \sum_{k=1}^{4g+2-\alpha} \frac{\sin(2 \pi \theta k)}{k}  = \frac{ \pi - 2 \pi \theta}{2} - \frac{\cos((4g-\alpha+3)2 \pi \theta)}{2(4g-\alpha+3) \sin( \pi \theta)} - \frac{\sin((4g-\alpha+3) 2 \pi \theta)}{2(4g-\alpha+3)} +  O ( g^{-2}\theta^{-2}  ).$$
We use the two equations above and express the $\sin$ and $\cos$ in terms of $\sin(8g \pi \theta)$ and $\cos(8g \pi \theta)$, and then use Taylor series. We get
\begin{align}
B_1 &= \frac{\alpha}{2} \left[ \pi - \frac{ \cos(8g \pi \theta)}{4g \sin( \pi \theta)} + \frac{2 \sin(8g \pi \theta)}{4g} \right]+ O \Big(g^{-1} \theta \alpha^3 + g^{-2} \theta^{-2} \alpha\Big) \nonumber \\
&= \frac{\pi \alpha}{2} - \frac{\alpha}{2} \left[   \frac{\cos(8g \pi \theta)}{(2g) 2 \pi \theta} - \frac{\sin(8g \pi \theta)}{2g} \right] + O \Big(g^{-1} \theta \alpha^3 + g^{-2} \theta^{-2} \alpha \Big) .
\label{b1f}
\end{align}
Now we evaluate $B_2$. By making a change of variables in each of the summands of $B_2$, we get
\begin{align}
B_2 &= \frac{1}{2} \sum_{y=1}^{\alpha-2} y \left( \frac{\sin(2 \pi \theta(4g+1-y))}{4g+1-y}- \frac{\sin(2 \pi \theta(4g+1+y))}{4g+1+y} \right) \nonumber \\
&= \sum_{y=1}^{\alpha-2} y \frac{ y \sin(2 \pi \theta(4g+1)) \cos(2 \pi y \theta) - (4g+1) \sin(2 \pi y \theta) \cos(2 \pi (4g+1) \theta)}{(4g+1)^2-y^2} =O (g^{-1} \theta \alpha^3). \label{b2estimate}
\end{align}

Now we look at $B_3$. We rewrite
\begin{align}
B_3 &= \frac{1}{4} \sum_{k=2g-\frac{\alpha}{2}+2}^{2g} \frac{\sin(4 \pi  \theta k ) }{k} - \frac{1}{4} \sum_{k=2g+1}^{2g+\frac{\alpha}{2}-1} \frac{\sin(4 \pi  \theta k )}{k} \nonumber \\
&=  \frac{1}{4} \sum_{k=2g-\frac{\alpha}{2}+2}^{2g} \frac{\sin(4 \pi  \theta k ) }{k}  - \frac{1}{4} \sum_{k=2g-\frac{\alpha}{2}+2}^{2g} \frac{\sin(4 \pi \theta(k+\frac{\alpha}{2}-1))}{k+\frac{\alpha}{2}-1} \nonumber \\
&= \frac{\frac{\alpha}{2}-1}{4} \sum_{k=2g+2-\frac{\alpha}{2}}^{2g} \frac{ \sin(4 \pi  \theta k )}{k(k-1+ \frac{\alpha}{2})} + O\Big(g^{-1} \theta \alpha^2 \Big)=  O \Big(g^{-2} \alpha^{2} + g^{-1} \theta \alpha^2 \Big) = O \Big( g^{-1} \theta \alpha^2 \Big)  . \label{b3estimate}
\end{align}
Combining \eqref{b1f}, \eqref{b2estimate} and \eqref{b3estimate}, it follows that
\begin{equation} A(0,\theta) = \frac{\pi \alpha}{2} - \frac{\alpha}{2} \left[   \frac{\cos(8g \pi \theta)}{(2g) 2 \pi \theta} - \frac{\sin(8g \pi \theta)}{2g} \right]   + O\Big(g^{-1} \theta \alpha^3 + g^{-2} \theta^{-2} \alpha \Big)  . \label{a0}
\end{equation}

\end{proof}
\subsection{Explicit formulas for the coefficients in Theorem \ref{fourth}}
By directly computing the residues in sections \ref{maintermsec} and \ref{secmain}, we find that
\begin{equation}
a_{10}=\frac{1}{\zeta_q(2)} \Bigg[ \frac{2048}{10!}  \bq -\frac{7680}{3!10!} \cq \Bigg],
\label{coeff1}
\end{equation}
\begin{align}
a_9 = \frac{1}{\zeta_q(2)} \Bigg[ & \frac{1}{10!} \Big(  51200 \bq - \frac{ \bprime}{q} \Big)  - \frac{1}{3!10!} \Big( -24000 \frac{q \cq}{q-1}   +216000 \cq - 13200 \cprimew  \nonumber \\
&-24000 \cprimex  \Big) \Bigg],
\label{coeff2}
\end{align}
and
\begin{align}
a_8 &= \frac{1}{\zeta_q(2)} \Bigg[ \frac{1}{10!} \Big( 560640 \bq - \frac{207360 \bprime}{q} + \frac{23040 \bdoubleprime}{q^2} \Big) - \frac{1}{3! 10!} \Big( -531360 \frac{q \cq}{q-1} +2616480 \cq  \nonumber\\
&- 347760 \cprimew +\frac{58320  \frac{d}{dw} \mathcal{C} (1,w) \rvert_{w=\frac{1}{q}} }{q-1}-17280 \frac{q \cprimex}{q-1} -531360 \cprimex \nonumber \\
&  +7560 \frac{  \frac{d^2}{dw^2} \mathcal{C} (1,w) \rvert_{w=\frac{1}{q}} }{q^2} +58320 \frac{  \frac{d}{dx} \frac{d}{dw} \mathcal{C}(x,w) \rvert_{w=\frac{1}{q},x=1} }{q} -8640  \frac{d^2}{dx^2} \mathcal{C} (x,\frac{1}{q}) \rvert_{x=1} \Big)  \Bigg] .
\label{coeff3}
 \end{align}
We will show (by direct computation) that these coefficients match the answer predicted by Andrade and Keating \cite{conjectures}. 
In \eqref{az}, denote by $A_j$ the partial derivative, evaluated at zero, of the function $A(z_1,\ldots, z_4)$ with respect to the $j^{\text{th}}$ variable. For ease of notation, we let $A_j(0,0,0,0)=A_j$ and $A(0,0,0,0)=A$. Let $Q(x) = \sum_{i=0}^{10} b_ix^i$. By directly computing the residues in the integral expression for $Q$ (in \eqref{qx}), we get that
$$b_{10} = \frac{A}{4725 \zeta_q(2)},$$
$$b_9 = \frac{1}{1890 \zeta_q(2)} \Big[ 10 A + \frac{1}{ \log q} \Big( A_1 +A_2 +A_3 +A_4 \Big)  \Big],$$ and
\begin{align*}
b_8 &= \frac{1}{1260 \zeta_q(2)} \Big[ 74 A + \frac{15}{\log g} \Big(A_1 +A_2 +A_3 +A_4 \Big) + \frac{2}{ ( \log q)^2 } \Big( A_{12} +  A_{13}+A_{14}+A_{23}+A_{24}+A_{34}  \Big)\Big].
\end{align*}
The fact that $a_{10}=b_{10}$ follows from the identity
$$\bq = \cq = A =  \prod_P \frac{(|P|-1)^6(|P|^5+7|P|^4-3|P|^3+6|P|^2-4|P|+1)}{|P|^{10}(|P|+1)}.$$
Now write $A_1=A_2=A_3=A_4 = aA,$ where
$$a = \sum_P \frac{d(P) (25|P|^4-16|P|^3+30|P|^2-20|P|+5)}{(|P|-1)(|P|^5+7|P|^4-3|P|^3+6|P|^2-4|P|+1)}.$$
We compute that
$$ \bprime = -2qa A, \,   \cprimew =-4qaA , \,   \cprimex = A \Big(-a- \sum_P \frac{d(P)}{|P|^2-1} \Big)= A \Big( -a- \frac{1}{q-1} \Big),$$ where the sum over primes above can be easily computed by looking at the logarithmic derivative of $\zeta_q(s)$. Combining the above identities gives that $a_9=b_9$. 
We further compute $A_{ij} = A(a^2+h),$ for $i \neq j$, where
$$ h= \sum_P -\frac{d(P)^2 |P| \left(17 |P|^9+26 |P|^8+13 |P|^7+57 |P|^6-117 |P|^5+113 |P|^4-65 |P|^3+27 |P|^2-8 |P|+1\right)}{(|P|-1)^2 \left(|P|^5+7 |P|^4-3 |P|^3+6 |P|^2-4 |P|+1\right)^2}.$$
We have $ \bdoubleprime = q^2 A (4a^2+2a-2b)$, where
$$b= \sum_P \frac{d(P)^2 \left(45 |P|^{10}+117 |P|^9-73 |P|^8+330 |P|^7-485 |P|^6+450 |P|^5-295 |P|^4+138 |P|^3-40 |P|^2+5 |P|\right)}{(|P|-1)^2 \left(|P|^5+7 |P|^4-3 |P|^3+6 |P|^2-4 |P|+1\right)^2}.$$
Also $$ \frac{d^2}{dw^2} \mathcal{C} (1,w) \rvert_{w=\frac{1}{q}} = q^2 A (16 a^2- 4(e-a)) ,  \, \, \frac{d^2}{dx^2} \mathcal{C} (x,\frac{1}{q}) \rvert_{x=1} = A \Bigg( \Big( a+ \frac{1}{q-1} \Big)^2 - \Big(r-a-\frac{1}{q-1} \Big) \Bigg),$$
$$  \frac{d}{dx} \frac{d}{dw} \mathcal{C}(x,w) \rvert_{w=\frac{1}{q},x=1} = qA \Bigg( 4a \Big( a+ \frac{1}{q-1} \Big) - 4f \Bigg),$$
where
$$ e =\frac{d(P)^2 \left(90 |P|^{10}+234 |P|^9-146 |P|^8+660 |P|^7-970 |P|^6+900 |P|^5-590 |P|^4+276 |P|^3-80 |P|^2+10 |P|\right)}{(|P|-1)^2 \left(|P|^5+7 |P|^4-3 |P|^3+6 |P|^2-4 |P|+1\right)^2},$$
\begin{align*}
 r & = \Big[ d(P)^2  \Big(38 |P|^{12}+220 |P|^{11}+123 |P|^{10}+305 |P|^9+89 |P|^8-98 |P|^7+34 |P|^6+20 |P|^5-89 |P|^4 \\
 &+98 |P|^3-43 |P|^2+7|P| \Big) \Big] \Big/ \Big[ (|P|-1)^2 (|P|+1)^2 \left(|P|^5+7 |P|^4-3 |P|^3+6 |P|^2-4 |P|+1\right)^2 \Big],
 \end{align*}
$$f=\frac{d(P)^2 |P| \left(28 |P|^9+91 |P|^8-86 |P|^7+273 |P|^6-368 |P|^5+337 |P|^4-230 |P|^3+111 |P|^2-32 |P|+4\right)}{(|P|-1)^2 \left(|P|^5+7 |P|^4-3 |P|^3+6 |P|^2-4 |P|+1\right)^2}. $$
The fact that $a_8=b_8$ follows from the above identities and upon noticing that
$$ \frac{7e}{2} +27f-r-24h-32b = \sum_P \frac{d(P)^2 |P|^2}{(|P|^2-1)^2} = \frac{q}{(q-1)^2},$$ where the last identity is obtained by looking at the second derivative of $\log \zeta_q(s)$.
\newline
\newline
 \textsc{Acknowledgments.} I would like to thank my advisor, K. Soundararajan, for the many helpful discussions we have had while I was working on this problem. I would also like to thank J. Keating and Z. Rudnick for their comments.
\bibliography{4th_bibl}

\begin{thebibliography}{10}

\bibitem{keatingandrade}
J.~C. Andrade and J.~P. Keating.
\newblock The mean value of {$L(\frac12,\chi)$} in the hyperelliptic ensemble.
\newblock {\em J. Number Theory}, 132(12):2793--2816, 2012.

\bibitem{conjectures}
J.~C. Andrade and J.~P. Keating.
\newblock Conjectures for the integral moments and ratios of {$L$}-functions
  over function fields.
\newblock {\em J. Number Theory}, 142:102--148, 2014.

\bibitem{bucurdiaconu}
Alina Bucur and Adrian Diaconu.
\newblock Moments of quadratic {D}irichlet {$L$}-functions over rational
  function fields.
\newblock {\em Mosc. Math. J.}, 10(3):485--517, 661, 2010.

\bibitem{faicarneiro}
Emanuel Carneiro and Vorrapan Chandee.
\newblock Bounding {$\zeta(s)$} in the critical strip.
\newblock {\em J. Number Theory}, 131(3):363--384, 2011.

\bibitem{carneirovaaler}
Emanuel Carneiro, Friedrich Littmann, and Jeffrey~D. Vaaler.
\newblock Gaussian subordination for the {B}eurling-{S}elberg extremal problem.
\newblock {\em Trans. Amer. Math. Soc.}, 365(7):3493--3534, 2013.

\bibitem{chandeeriemann}
Vorrapan Chandee.
\newblock On the correlation of shifted values of the {R}iemann zeta function.
\newblock {\em Q. J. Math.}, 62(3):545--572, 2011.

\bibitem{soundfai}
Vorrapan Chandee and K.~Soundararajan.
\newblock Bounding {$\vert \zeta(\frac12+it)\vert $} on the {R}iemann
  hypothesis.
\newblock {\em Bull. Lond. Math. Soc.}, 43(2):243--250, 2011.

\bibitem{cfkrs}
J.~B. Conrey, D.~W. Farmer, J.~P. Keating, M.~O. Rubinstein, and N.~C. Snaith.
\newblock Integral moments of {$L$}-functions.
\newblock {\em Proc. London Math. Soc. (3)}, 91(1):33--104, 2005.

\bibitem{dgh}
Adrian Diaconu, Dorian Goldfeld, and Jeffrey Hoffstein.
\newblock Multiple {D}irichlet series and moments of zeta and {$L$}-functions.
\newblock {\em Compositio Math.}, 139(3):297--360, 2003.

\bibitem{rudnickfaifman}
Dmitry Faifman and Ze{\'e}v Rudnick.
\newblock Statistics of the zeros of zeta functions in families of
  hyperelliptic curves over a finite field.
\newblock {\em Compos. Math.}, 146(1):81--101, 2010.

\bibitem{aflorea}
Alexandra Florea.
\newblock Improving the error term in the mean value of {$L(1/2,\chi_D)$} in
  the hyperelliptic ensemble.
\newblock {\em preprint}.

\bibitem{aflorea2}
Alexandra Florea.
\newblock The second and third moment of {$L(1/2,\chi_D)$} in the hyperelliptic
  ensemble.
\newblock {\em preprint}.

\bibitem{hl}
G.~H. Hardy and J.~E. Littlewood.
\newblock Contributions to the theory of the {R}iemann zeta-function and the
  theory of the distribution of primes.
\newblock {\em Acta Math.}, 41(1):119--196, 1916.

\bibitem{harper}
Adam Harper.
\newblock Sharp conditional bounds for moments of the {R}iemann zeta function.

\bibitem{hayes}
D.~R. Hayes.
\newblock The expression of a polynomial as a sum of three irreducibles.
\newblock {\em Acta Arith.}, 11:461--488, 1966.

\bibitem{ingham}
A.E. Ingham.
\newblock Mean-value theorems in the theory of the {R}iemann zeta-function.
\newblock {\em Proc. London Math. Soc.}, 27:273--300, 1926.

\bibitem{jutila}
M.~Jutila.
\newblock On the mean value of {$L({1\over 2},\,\chi )$} for real characters.
\newblock {\em Analysis}, 1(2):149--161, 1981.

\bibitem{katzsarnak}
Nicholas~M. Katz and Peter Sarnak.
\newblock {\em Random matrices, {F}robenius eigenvalues, and monodromy},
  volume~45 of {\em American Mathematical Society Colloquium Publications}.
\newblock American Mathematical Society, Providence, RI, 1999.

\bibitem{katzsarnak2}
Nicholas~M. Katz and Peter Sarnak.
\newblock Zeroes of zeta functions and symmetry.
\newblock {\em Bull. Amer. Math. Soc. (N.S.)}, 36(1):1--26, 1999.

\bibitem{ksnaith}
J.~P. Keating and N.~C. Snaith.
\newblock Random matrix theory and {$\zeta(1/2+it)$}.
\newblock {\em Comm. Math. Phys.}, 214(1):57--89, 2000.

\bibitem{rosen}
Michael Rosen.
\newblock {\em Number theory in function fields}, volume 210 of {\em Graduate
  Texts in Mathematics}.
\newblock Springer-Verlag, New York, 2002.

\bibitem{rubinstein}
M.~O. {Rubinstein} and K.~{Wu}.
\newblock Moments of zeta functions associated to hyperelliptic curves over
  finite fields.
\newblock {\em preprint}, July 2014.

\bibitem{sound}
K.~Soundararajan.
\newblock Nonvanishing of quadratic {D}irichlet {$L$}-functions at
  {$s=\frac12$}.
\newblock {\em Ann. of Math. (2)}, 152(2):447--488, 2000.

\bibitem{soundyoung}
K.~Soundararajan and Matthew~P. Young.
\newblock The second moment of quadratic twists of modular {$L$}-functions.
\newblock {\em J. Eur. Math. Soc. (JEMS)}, 12(5):1097--1116, 2010.

\bibitem{soundupperbounds}
Kannan Soundararajan.
\newblock Moments of the {R}iemann zeta function.
\newblock {\em Ann. of Math. (2)}, 170(2):981--993, 2009.

\bibitem{weil}
Andr{\'e} Weil.
\newblock {\em Sur les courbes alg\'ebriques et les vari\'et\'es qui s'en
  d\'eduisent}.
\newblock Actualit\'es Sci. Ind., no. 1041, Publ. Inst. Math. Univ. Strasbourg
  {\bf 7}. Hermann et Cie., Paris, 1948.

\end{thebibliography}
\bibliographystyle{plain}
\end{document}